\newtheorem{thm}{Theorem}[section]
\newtheorem*{thm*}{Theorem}
\newtheorem*{cor*}{Corollary}
\newtheorem*{prop*}{Proposition}
\newtheorem{cor}[thm]{Corollary}
\newtheorem{prop}[thm]{Proposition}
\newtheorem{lem}[thm]{Lemma}
\theoremstyle{definition}
\newtheorem{defn}[thm]{Definition}
\newtheorem{conv*}{Convention}
\newtheorem{exmp}[thm]{Example}
\newtheorem{notn}[thm]{Notation}
\newtheorem*{notn*}{Notation}
\theoremstyle{remark}
\newtheorem{rem}[thm]{Remark}
\newtheorem*{idea*}{Idea}
\newcommand{\Spec}{{\rm Spec}}
\let\c@equation\c@thm
\numberwithin{thm}{section}
\numberwithin{equation}{section}
\title[Nonabelian Hodge Correspondence]{A Nonabelian Hodge Correspondence for Principal Bundles in Positive Characteristic}
\author{Mao Sheng, Hao Sun and Jianping Wang}
\begin{document}
\pagenumbering{arabic}
\maketitle
\begin{abstract}
In this paper, we prove a nonabelian Hodge correspondence for principal bundles on a smooth variety $X$ in positive characteristic, which generalizes the Ogus--Vologodsky correspondence for vector bundles. Then we extend the correspondence to parahoric torsors over a log pair $(X,D)$, where $D$ a reduced normal crossing divisor in $X$. As an intermediate step, we prove a correspondence between principal bundles on root stacks $\mathscr{X}$ and parahoric torsors on $(X,D)$, which generalizes the correspondence on curves given by Balaji--Seshadri to higher dimensional case.
\end{abstract}
	
\flushbottom
	
	
\renewcommand{\thefootnote}{\fnsymbol{footnote}}
\footnotetext[1]{Key words: nonabelian Hodge correspondence, G-Higgs bundle, flat G-bundle}
\footnotetext[2]{MSC2020:  14C30,  14L15, 20G15}
	
\section{Introduction}

Ogus--Vologodsky established the nonabelian Hodge correspondence for $G={\rm GL}_n$ in positive characteristic \cite{OV07}. A natural question is how to generalize Ogus--Vologodsky's result to an arbitrary reductive group $G$. Chen--Zhu established a nonabelian Hodge correspondence for reductive groups on curves with the help of Hitchin morphisms \cite{CZ15}. For nilpotent objects with small exponent their correspondence coincides with the Ogus--Vologodsky correspondence. However, their approach seems to be hard to be generalized to higher dimensional varieties, because there is still a lack of numerous necessary properties of the Hitchin morphism (see \cite[Conjecture 3.2 and Conjecture 5.2]{CN20} for instance).

As remarked in \cite[Remark 2.4]{LSZ15}, the exponential twisting approach to the Ogus--Vologodsky correspondence should respect $G$-structure for any subgroup $G\subset \mathrm{GL}_n$. In this paper, we aim to work out this point in detail, which is going to be applied to construct many absolutely irreducible crystalline $G$-local systems in a subsequent work. As would be clear to the reader, the key point in this approach is the construction of exponential maps for reductive groups with good properties. Fortunately, this has been studied by Serre \cite{Ser94} and several other authors in a series work \cite{Sei00,Mcn02,Sob15b}. In particular, Sobaje proved its existence and uniqueness when the characteristic is good \cite[Theorem 3.4]{Sob15b}. These knowledge suffice for generalizing the result of Lan--Sheng--Zuo \cite{LSZ15} in the vector bundle case to principal $G$-bundles, where $G$ denotes for a connected split reductive group $G$ over a perfect field $k$ in positive characteristic $p$. In effect, we establish a correspondence between nilpotent $G$-Higgs bundles and nilpotent flat $G$-bundles of exponent $\leq p-1$ for a smooth variety over $k$ which is $W_2(k)$-liftable. Next, we generalize this correspondence to the special log setting $(X,D)$, where $X$ is a smooth variety and $D$ is a reduced normal crossing divisor of $X$. There has been the work by Schepler \cite{Sch05} and the work \cite{LSYZ19} via exponential twisting in the vector bundle case, and the work by Krishnamoorthy and the first named author \cite{KS20a} for parabolic bundles over curves. The right generalization to principal bundles of parabolic Higgs bundles which have simple poles along $D$ is the notion of logahoric (=logarithmic parahoric) Higgs torsors (see \cite{KSZ23} for instance). Like in \cite{KS20a}, we first establish a logarithmic version of the correspondence over a tame root stack. However, the procedure of descending logarithmic Higgs torsors/connections from root stacks to logahoric Higgs torsors/connections over $(X,D)$ is a bit different from the generalized Biswas-Iyer-Simpson correspondence for parabolic vector bundles in loc. cit. When $X$ is projective equipped with an ample line bundle $H$, one has the notion of $R$-stability \cite{Ram75,Ram78} for principal bundles which generalizes the Mumford-Takemoto slope stability in vector bundle case \cite[\S 3]{Ram75}. It has been further generalized to a log pair $(X,D)$ in \cite{KSZ23}. We show that our correspondence preserves the $R$-stability condition, as one may expect.

\hspace{\fill}

Now we give the setup and main results in this paper. Let $k$ be a perfect field in positive characteristic $p$. Let $X$ be a $W_2(k)$-liftable smooth variety over $k$ and denote by $X'$ the Frobenius pullback. Let $G$ be a connected split reductive linear algebraic group over $k$ and we suppose that the characteristic $p$ is good \cite[\S 2.9]{Her13}. We introduce the following categories
\begin{itemize}
    \item ${\rm HIG}_{p-1}(X'/k,G)$: the category of nilpotent $G$-Higgs bundles of exponent $\leq p-1$ on $X'$;
    \item ${\rm MIC}_{p-1}(X/k,G)$: the category of nilpotent flat $G$-bundles of exponent $\leq p-1$ on $X$,
\end{itemize}
and prove that they are equivalent by establishing the Cartier and inverse Cartier transform via exponential twisting in \S\ref{sect_HIG_MIC_G}.
\begin{thm}[Theorem \ref{thm_HIG_MIC_G}]
Suppose that $X$ is $W_2(k)$-liftable. The categories ${\rm HIG}_{p-1}(X'/k,G)$ and ${\rm MIC}_{p-1}(X/k,G)$ are equivalent.
\end{thm}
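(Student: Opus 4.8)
The plan is to construct, for a choice of $W_2(k)$-lift $\widetilde{X}$ of $X$, a pair of mutually quasi-inverse functors --- an inverse Cartier transform $C^{-1}_{\widetilde X}\colon {\rm HIG}_{p-1}(X'/k,G)\to {\rm MIC}_{p-1}(X/k,G)$ and a Cartier transform $C_{\widetilde X}$ in the opposite direction --- by carrying out the exponential-twisting construction of Lan--Sheng--Zuo (and, before it, of Ogus--Vologodsky) $G$-equivariantly. The decisive new ingredient is an exponential morphism $\exp\colon \mathcal N_{\mathfrak g}\to G$ from the nilpotent cone of $\mathfrak g=\mathrm{Lie}(G)$ that is equivariant for the adjoint actions on source and target; by Sobaje's theorem such a map exists, and is unique, when $p$ is good. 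Adjoint equivariance is exactly what allows $\exp$ to induce, on any principal $G$-bundle $P$, a morphism from the nilpotent cone inside $\mathrm{ad}(P)$ to the group scheme $P\times^{G}G$, i.e.\ to manufacture gauge transformations out of nilpotent sections of the adjoint bundle, while the standing hypothesis ``of exponent $\le p-1$'' guarantees that the relevant values of $\exp$ are given by a truncated, hence polynomial, exponential series, so that the formal identities one needs --- additivity $\exp(x)\exp(y)=\exp(x+y)$ for commuting $x$, $y$ with vanishing $(p-1)$-fold products, triviality of $\mathrm{Ad}(\exp(x))$ on such $y$, and compatibility with $d$ --- survive modulo $p$. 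When $G\hookrightarrow \mathrm{GL}_n$ faithfully one also asks $\exp$ to be compatible with the truncated matrix exponential, so that, via associated bundles, the construction is compatible with the Ogus--Vologodsky correspondence for $\mathrm{GL}_n$; this is the point anticipated in \cite[Remark 2.4]{LSZ15}.

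To build $C^{-1}_{\widetilde X}$, fix a finite affine cover $\{U_i\}$ of $X$ together with compatible liftings $\widetilde U_i\subset \widetilde X$ and Frobenius liftings $\widetilde F_i\colon \widetilde U_i\to \widetilde U_i'$. As in the scalar case the maps $\zeta_i:=\tfrac1p\, d\widetilde F_i\colon F_X^\ast\Omega^1_{X'}\to \Omega^1_{U_i}$ have pairwise differences controlled by a \v{C}ech $1$-cocycle $h_{ij}\in \Gamma(U_{ij},F_X^\ast T_{X'})$, and $F_X^\ast E'$ carries its canonical flat connection $\nabla_0$ of vanishing $p$-curvature. Given $(E',\theta')\in {\rm HIG}_{p-1}(X'/k,G)$ with $G$-valued transition functions $g_{ij}$ satisfying $\mathrm{Ad}(g_{ij})\theta'_j=\theta'_i$, define the underlying $G$-bundle $E$ of $C^{-1}_{\widetilde X}(E',\theta')$ by the modified transition functions $\widehat g_{ij}:=g_{ij}\cdot \exp(\iota_{h_{ij}}\theta'_j)$, where the contraction $\iota_{h_{ij}}\theta'_j$ lies in the nilpotent cone inside $\mathrm{ad}(F_X^\ast E')|_{U_{ij}}$ because $\theta'$ is nilpotent. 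The cocycle condition for $\{\widehat g_{ij}\}$ on triple overlaps follows from the cocycle identity for $\{h_{ij}\}$, the $\mathrm{Ad}$-compatibility of $\theta'$ with $g_{ij}$, and the integrability $\theta'\wedge\theta'=0$ --- which makes the contractions $\iota_{h_{ij}}\theta'$ pairwise commute, so that $\exp$ is additive on them. The local connections $\nabla_i$ obtained from $\nabla_0$ by the exponential twist attached to $\zeta_i$ and $\theta'$ then satisfy $\nabla_i=\mathrm{Ad}(\widehat g_{ij})\,\nabla_j$ on $U_{ij}$ --- again because of $\zeta_i-\zeta_j= d h_{ij}$ and $\theta'\wedge\theta'=0$ --- and so glue to a global connection $\nabla$ on $E$; its flatness reduces to flatness of the $\nabla_i$ plus $\theta'\wedge\theta'=0$, and a direct computation identifies its $p$-curvature with the section of $\mathrm{ad}(E)\otimes F_X^\ast\Omega^1_{X'}$ glued from the $\theta'_i$ via the $\widehat g_{ij}$, which has exponent $\le p-1$. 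Hence $(E,\nabla)\in {\rm MIC}_{p-1}(X/k,G)$, functorially in $(E',\theta')$.

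For the Cartier transform one reverses this recipe: starting from $(E,\nabla)\in {\rm MIC}_{p-1}(X/k,G)$ with nilpotent $p$-curvature $\psi$ of exponent $\le p-1$, twist $\nabla$ over each $U_i$ by the exponential of the contracted $p$-curvature to obtain a connection with vanishing $p$-curvature; Cartier descent --- valid for $G$-bundles since the descent datum is $G$-equivariant --- produces a $G$-bundle $E'$ on $X'$, onto which $\psi$ descends, after a direct check, as a Higgs field $\theta'$ with $\theta'\wedge\theta'=0$ and exponent $\le p-1$. The identities assembled above show that $C_{\widetilde X}$ and $C^{-1}_{\widetilde X}$ are mutually quasi-inverse, and a standard comparison on a common refinement shows that, up to canonical isomorphism, they depend only on $\widetilde X$; the compatibility with associated bundles lets one reduce the flatness and $p$-curvature verifications, if desired, to the known $\mathrm{GL}_n$ statements.

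The genuinely new difficulty, beyond bookkeeping, is to isolate the precise package of properties of the exponential map --- adjoint equivariance as a morphism of schemes $\mathcal N_{\mathfrak g}\to G$, additivity on commuting nilpotents with vanishing $(p-1)$-fold products, triviality of $\mathrm{Ad}(\exp(x))$ on $y$ when $[x,y]=0$, compatibility with derivations and with a fixed faithful representation --- that is needed to run the two constructions $G$-equivariantly, and to verify that Sobaje's map has all of them in every good characteristic, including the small good primes for the exceptional groups, where the geometry of $\mathcal N_{\mathfrak g}$ is most delicate; once this input is secured, the gluing and $p$-curvature computations are the $G$-equivariant shadows of the matrix manipulations of Lan--Sheng--Zuo and Ogus--Vologodsky. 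A secondary point requiring care is to formulate ``nilpotent'' and ``of exponent $\le p-1$'' intrinsically for $G$-Higgs bundles and flat $G$-bundles so that they are preserved under both transforms and keep every exponential series polynomial, since a representation-dependent formulation would itself rely on the compatibility of $\exp$ across representations being independent of choices.
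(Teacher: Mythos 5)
Your proposal follows essentially the same route as the paper: Sobaje's $G$-equivariant exponential applied to the Deligne--Illusie cocycle $h_{ij}$ contracted with the Higgs field (resp.\ the $p$-curvature) to twist the transition data and the canonical connection $\nabla_{can}$, followed by Cartier descent for $G$-bundles in the other direction. The technical point you flag as the genuine difficulty---additivity, $\mathrm{Ad}$-triviality and $d$-compatibility of $\overline{\exp}$, and its compatibility with a faithful representation---is exactly what the paper settles by lifting to $W_m(k)$ and arguing mod $p$, precisely because in characteristic $p$ the exponential need not commute with morphisms of algebraic groups.
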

\noindent Suppose $X$ is projective and we fix an ample line bundle $H$ on $X$. We follow Ramanathan's approach \cite{Ram75,Ram96a} to define the stability condition of $G$-bundles on $X$ of an arbitrary dimension, which is called \emph{$R$-stability condition} (see \S\ref{subsect_G_Higgs_flat}). Denote by ${\rm HIG}^{s(s)}_{p-1}(X'/k,G) \subseteq {\rm HIG}_{p-1}(X'/k)$ (resp. ${\rm MIC}^{s(s)}_{p-1}(X/k,G) \subseteq {\rm MIC}_{p-1}(X/k)$) the full subcategory of $R$-(semi)stable nilpotent $G$-Higgs bundles (resp. flat $G$-bundles) of exponent $\leq p-1$. We prove that the correspondence also holds under $R$-stability conditions (Theorem \ref{thm_HIG_MIC_G_stab}).

Now we consider the correspondence on $X$ together with a reduced normal crossing divisor $D=\sum_{i=1}^s D_i$, where $D_i$ are irreducible components for $1 \leq i \leq s$. By introducing parahoric torsors, we prove a logarithmic version of the nonabelian Hodge correspondence. In a nutshell, we prove the equivalence of categories between nilpotent logahoric Higgs torsors and nilpotent logahoric connections of exponent $\leq p-1$. Here is a sketch of the proof. We equip each $D_i$ with a tame weight $\alpha_i$, where a tame weight is a cocharacter with rational coefficients such that the denominator $d_i$ is coprime to $p$. Denote by $\boldsymbol\alpha$ (resp. $\boldsymbol{d}$) the collection of weights (resp. integers). With respect to the above data, there exists a (tame) root stack $\mathscr{X}$ (see \cite[Theorem 4.1]{MO05} for example). We define the following categories
\begin{itemize}
    \item ${\rm HIG}_{p-1}(\mathscr{X}'_{\rm log}/k,G)$: the category of nilpotent logarithmic $G$-Higgs bundles of exponent $\leq p-1$ on $\mathscr{X}'$;
    \item ${\rm MIC}_{p-1}(\mathscr{X}_{\rm log}/k,G)$: the category of nilpotent logarithmic flat $G$-bundles of exponent $\leq p-1$ on $\mathscr{X}$, of which the residues are also nilpotent of exponent $\leq p-1$.
\end{itemize}
We first prove a logarithmic version of the nonabelian Hodge correspondence on the root stack $\mathscr{X}$ by applying results in \S\ref{sect_HIG_MIC_G} and \cite[Appendix]{LSYZ19}.
\begin{thm}[Theorem \ref{thm_HIG_MIC_G_log_stack}]
Suppose that $(\mathscr{X},\widetilde{D})$ is $W_2$-liftable. The categories ${\rm HIG}_{p-1}(\mathscr{X}'_{\rm log}/k , G)$ and ${\rm MIC}_{p-1}(\mathscr{X}_{\rm log}/k , G)$ are equivalent.
\end{thm}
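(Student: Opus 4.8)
The plan is to run the exponential-twisting construction of \S\ref{sect_HIG_MIC_G} essentially verbatim, but with the cotangent sheaf replaced throughout by the logarithmic cotangent sheaf $\Omega^1_{\mathscr{X}/k}(\log\widetilde D)$ and the Cartier isomorphism replaced by its logarithmic analogue. Concretely, a $W_2$-lift of the log pair $(\mathscr{X},\widetilde D)$ together with \'etale-local lifts of the relative Frobenius of $\mathscr{X}$ compatible with the log structure furnish the data for a logarithmic inverse Cartier transform
\[
C^{-1}_{\rm log}\colon {\rm HIG}_{p-1}(\mathscr{X}'_{\rm log}/k,G)\longrightarrow {\rm MIC}_{p-1}(\mathscr{X}_{\rm log}/k,G),
\]
together with a Cartier transform $C_{\rm log}$ in the other direction, exactly as in Theorem~\ref{thm_HIG_MIC_G}. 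Two inputs make this work: the logarithmic Deligne--Illusie/Cartier isomorphism for the log-smooth stack $(\mathscr{X},\widetilde D)/k$ --- available because $\widetilde D$ is a normal crossing divisor and $(\mathscr{X},\widetilde D)$ is $W_2$-liftable, via \cite[Appendix]{LSYZ19} combined with the \'etale-local structure of the \emph{tame} root stack (tameness, i.e. $p\nmid d_i$, is precisely what makes the stacky Frobenius behave like the usual one) --- and Sobaje's exponential map for $G$ on nilpotent sections of exponent $\leq p-1$ \cite{Sob15b}, which is what keeps the twisting cochain $G$-valued.

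First I would fix a $W_2$-lift of $(\mathscr{X},\widetilde D)$ and, over an \'etale cover $\{U_\alpha\}$ of $\mathscr{X}$ adapted to $\widetilde D$, choose logarithmic Frobenius lifts $\widetilde F_\alpha$. Given $(E',\theta')\in{\rm HIG}_{p-1}(\mathscr{X}'_{\rm log}/k,G)$, pull back the underlying $G$-bundle along Frobenius and, on each $U_\alpha$, twist the canonical connection on $F^\ast E'|_{U_\alpha}$ by the exponential of a local expression built from $\theta'$, the lift $\widetilde F_\alpha$, and the log Cartier isomorphism --- the same recipe as in \S\ref{sect_HIG_MIC_G} with $\Omega^1(\log\widetilde D)$ in place of $\Omega^1$. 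The change-of-Frobenius-lift comparison on overlaps is again the exponential of a logarithmic $1$-cochain valued in the adjoint bundle, and the $\theta'$-contribution makes these glue to a global flat logarithmic $G$-connection; here one uses that $\exp$ is a morphism of schemes on the exponent-$\leq p-1$ nilpotent cone, so that reductions of structure group are transported. Restricting along each component $\widetilde D_i$, the residue of the output connection is computed from the residue of $\theta'$ by the same exponential formula, so nilpotency of exponent $\leq p-1$ of the residue is preserved (since $\exp$ and $\log$ are mutually inverse there), placing the output in ${\rm MIC}_{p-1}(\mathscr{X}_{\rm log}/k,G)$. Running the inverse construction (twisting by $\log$) and checking that the two composites are naturally isomorphic to the identity --- a local computation identical to the non-logarithmic one --- gives the equivalence.

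The main obstacle I expect is the \emph{bookkeeping along $\widetilde D$}: one must check that the log Cartier isomorphism, the Frobenius lifts, and the exponential map for $G$ are simultaneously compatible on the \'etale (or formal) neighbourhoods of $\widetilde D$, so that the twisting cochain really is $G$-valued and the residue condition on the ${\rm MIC}$ side matches the automatic nilpotency of the residue of $\theta'$ on the ${\rm HIG}$ side. In particular one must verify that the hypothesis ``exponent $\leq p-1$'' is strong enough that all truncated exponential series occurring in the twisting over $\widetilde D$ terminate and that Sobaje's $\exp$ is defined on the relevant nilpotent sections of $\mathrm{ad}(E')|_{\widetilde D_i}$; this is the point where ``$p$ good'', ``exponent $\leq p-1$'', and ``tame root stack'' genuinely interact. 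Once that compatibility is established, upgrading the vector-bundle logarithmic correspondence of \cite[Appendix]{LSYZ19} to the $G$-bundle statement is formally the same as the upgrade performed in \S\ref{sect_HIG_MIC_G} in the absence of a divisor.
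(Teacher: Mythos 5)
Your proposal is essentially the paper's own argument: a logarithmic exponential-twisting construction of the (inverse) Cartier transform over the tame root stack, resting on the $W_2$-lift of $(\mathscr{X},\widetilde D)$, the log Deligne--Illusie data as in \cite[Appendix]{LSYZ19}, and Sobaje's exponential map, with the key verification that the residue of the output connection is $F_1^*$ of the residue of the Higgs field so that nilpotency of exponent $\leq p-1$ is preserved. The paper packages this slightly differently---reducing to the explicit local model $[\Spec(k[w_1,\dots,w_d])/\mu_{d_1}\times\cdots\times\mu_{d_s}]$ after the non-logarithmic stacky case (Theorem \ref{thm_HIG_MIC_G_stack}) and Lemma \ref{lem_HIG_MIC_stack_0}---but the method and the points you flag as delicate are the same.
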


Then let $\mathscr{F}$ be a $G$-bundle on $\mathscr{X}$. We can construct a parahoric group scheme $\mathcal{G}_{\boldsymbol\alpha}$ by Lemma \ref{lem_parah_grp}. Note that this parahoric group scheme $\mathcal{G}_{\boldsymbol\alpha}$ depends on the given $G$-bundle $\mathscr{F}$ and the weights $\boldsymbol\alpha$ is determined by the type of $\mathscr{F}$. Similar to the case of curves \cite[\S 3]{BS15}, we prove that parahoric $\mathcal{G}_{\boldsymbol\alpha}$-torsors on $X$ are in one-to-one correpsondence with $G$-bundles on $\mathscr{X}$ of type $\boldsymbol\rho$, where $\boldsymbol\rho = \{\rho_i, 1 \leq i \leq s\}$ is a collection of representations $\rho_i : \mu_{d_i} \rightarrow T$ determined by $\boldsymbol\alpha$ and $T$ is a maximal torus of $G$.

\begin{prop}[Proposition \ref{prop_parah_equiv_G}]
    The category of parahoric $\mathcal{G}_{\boldsymbol\alpha}$-torsors on $X$ and the category of $G$-bundles on $\mathscr{X}$ of type $\boldsymbol\rho$ are equivalent.
\end{prop}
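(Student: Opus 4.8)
The plan is to follow the curve case of Balaji--Seshadri \cite{BS15}, replacing their ramified Galois cover by the root stack $\pi\colon\mathscr{X}\to X$ and their invariant direct image by $\pi_*$, and then to handle the new phenomena occurring along the deeper strata of the normal crossing divisor. \emph{Step 1 (local model).} Fix a point $x$ on $D_{i_1}\cap\cdots\cap D_{i_r}$ and on no other component. Étale-locally near $x$ one may write $X=\Spec A$ with $D_{i_j}=\{z_j=0\}$ and
\[
\mathscr{X}=\big[\,\Spec\big(A[w_1,\dots,w_r]/(w_1^{d_{i_1}}-z_1,\dots,w_r^{d_{i_r}}-z_r)\big)\,\big/\,\boldsymbol\mu\,\big],\qquad \boldsymbol\mu=\prod_{j=1}^{r}\mu_{d_{i_j}},
\]
where $\boldsymbol\mu$ acts by $w_j\mapsto\zeta_j w_j$. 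Thus a $G$-bundle on $\mathscr{X}$ is the same as a $\boldsymbol\mu$-equivariant $G$-bundle on this cover, and having type $\boldsymbol\rho$ means that, in a suitable local frame, the $\boldsymbol\mu$-action on the fiber along the ramification locus is conjugation by the representations $\rho_j\colon\mu_{d_{i_j}}\to T$. Since $p\nmid d_{i_j}$, the group $\boldsymbol\mu$ is linearly reductive, so $\pi_*$ is exact on quasi-coherent sheaves, commutes with flat base change, and $\pi_*\mathcal{O}_{\mathscr{X}}=\mathcal{O}_X$; these properties are used throughout.

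\emph{Step 2 (the functor $\Phi$).} For a $G$-bundle $\mathscr{F}$ on $\mathscr{X}$ of type $\boldsymbol\rho$, Lemma \ref{lem_parah_grp} identifies $\pi_*\underline{\mathrm{Aut}}(\mathscr{F})$ with the parahoric group scheme $\mathcal{G}_{\boldsymbol\alpha}$, whose isomorphism class over $X$ depends only on $\boldsymbol\rho$; hence $\Phi(\mathscr{F}):=\pi_*\mathscr{F}$ is canonically a torsor under $\mathcal{G}_{\boldsymbol\alpha}$, and $\Phi$ is visibly functorial. Over $X\setminus D$ the map $\pi$ is an isomorphism, so $\Phi(\mathscr{F})|_{X\setminus D}$ is just the underlying $G$-bundle; near a stratum one checks, in the local model of Step 1 and using the type condition together with the representation theory of $\boldsymbol\mu$, that $\pi_*\mathscr{F}$ is étale-locally isomorphic to $\mathcal{G}_{\boldsymbol\alpha}$ with the correct local type. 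Therefore $\Phi(\mathscr{F})$ is a parahoric $\mathcal{G}_{\boldsymbol\alpha}$-torsor on $X$.

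\emph{Step 3 (quasi-inverse and equivalence).} Fix the reference bundle $\mathscr{F}$ used to construct $\mathcal{G}_{\boldsymbol\alpha}$. Given a parahoric $\mathcal{G}_{\boldsymbol\alpha}$-torsor $\mathcal{E}$ on $X$, twist $\mathscr{F}$ by $\mathcal{E}$ over $X\setminus D$ (via the identification $\mathcal{G}_{\boldsymbol\alpha}|_{X\setminus D}\cong\underline{\mathrm{Aut}}(\mathscr{F})|_{X\setminus D}$) to obtain a $G$-bundle there, and let $\Psi(\mathcal{E})$ be its canonical extension across $\widetilde{D}$: such an extension exists and is unique because $\mathscr{X}$ is normal, $\widetilde{D}\subset\mathscr{X}$ has codimension one, and the local model pins down the unique extension of type $\boldsymbol\rho$ — here the parahoric structure of $\mathcal{E}$ is used. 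Functoriality of $\Psi$ and the isomorphisms $\Psi\circ\Phi\cong\mathrm{id}$ and $\Phi\circ\Psi\cong\mathrm{id}$ are then verified by fppf descent, reducing to the local model of Step 1 where both composites are manifestly the identity. Full faithfulness of $\Phi$ follows in addition from the chain $\mathrm{Hom}_{\mathscr{X}}(\mathscr{F}_1,\mathscr{F}_2)=H^0(\mathscr{X},\underline{\mathrm{Isom}}(\mathscr{F}_1,\mathscr{F}_2))=H^0(X,\pi_*\underline{\mathrm{Isom}}(\mathscr{F}_1,\mathscr{F}_2))=\mathrm{Hom}_X(\Phi(\mathscr{F}_1),\Phi(\mathscr{F}_2))$, the last two equalities coming from $\pi_*\mathcal{O}_{\mathscr{X}}=\mathcal{O}_X$ and Step 1.

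\emph{Main obstacle.} On a curve $D$ is a finite set of closed points, so \cite{BS15} only ever meet the codimension-one local model $[\mathbb{A}^1/\mu_d]$; here the genuinely new difficulty is the behavior along the deeper strata $D_{i_1}\cap\cdots\cap D_{i_r}$. One must check that the local parahoric group schemes attached to the various strata patch into a single flat affine $X$-group scheme with reductive generic fiber — this is exactly the content of Lemma \ref{lem_parah_grp} — and that the extension and descent arguments of Step 3 remain valid in codimension $\geq 2$; this is where normality of $X$ and $\mathscr{X}$, reflexivity of the sheaves involved, and the exactness of $\pi_*$ in the tame case are indispensable.
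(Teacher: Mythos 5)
Your Step 2 does not work as stated: $\Phi(\mathscr{F}'):=\pi_*\mathscr{F}'$ is not a parahoric $\mathcal{G}_{\boldsymbol\alpha}$-torsor. The group scheme $\mathcal{G}_{\boldsymbol\alpha}=\pi_*\underline{\mathrm{Aut}}(\mathscr{F})$ is built from one \emph{fixed} reference bundle $\mathscr{F}$ (with $\mathscr{F}|_{X\setminus D}$ trivial, cf.\ Lemma \ref{lem_parah_grp} and Notation \ref{notn_parah_grp}); for a variable $\mathscr{F}'$ of type $\boldsymbol\rho$ there is no natural $\mathcal{G}_{\boldsymbol\alpha}$-action on $\pi_*\mathscr{F}'$, and $\pi_*\underline{\mathrm{Aut}}(\mathscr{F}')$ need not be isomorphic to $\mathcal{G}_{\boldsymbol\alpha}$ as an $X$-group scheme (it is only an inner twist, and Lemma \ref{lem_parah_grp} moreover uses triviality on $X\setminus D$), so the claim that its isomorphism class "depends only on $\boldsymbol\rho$" is not justified. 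Worse, $\pi_*\mathscr{F}'$ fails local triviality along $D$: in your local model its sections are $\boldsymbol\mu$-equivariant sections of $\mathscr{F}'$, and for nontrivial $\rho_i$ these do not exist near $\widetilde{D}_i$. For instance with $G=\mathbb{G}_m$ and the trivial bundle equipped with the equivariant structure given by the tautological character $\rho\colon\mu_d\to\mathbb{G}_m$, an equivariant section $s\in k[w]^{\times}$ with $s(\zeta w)=\zeta s(w)$ must lie in $w\,k[w]$, hence cannot be a unit; so $\pi_*\mathscr{F}'$ has empty fibre over the divisor and is certainly not a torsor. The object that \emph{is} a $\mathcal{G}_{\boldsymbol\alpha}$-torsor is $\pi_*\underline{\mathrm{Isom}}(\mathscr{F},\mathscr{F}')$, because two bundles of the same type $\boldsymbol\rho$ are étale-locally equivariantly isomorphic; this is the functor $\beta_{\mathscr{F}}\colon\mathscr{F}'\mapsto\pi_*{\rm Aut}_{\mathscr{X}}(\mathscr{F},\mathscr{F}')$ the paper uses, quoting \cite[Theorem 11]{Dam24} after writing $\mathscr{X}\cong[Y/\Gamma]$. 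Your full-faithfulness chain at the end of Step 3 already works with the Isom sheaf, but it does not compute morphisms between the objects $\Phi(\mathscr{F}')$ you actually defined.

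Your quasi-inverse also has a gap. You construct $\Psi(\mathcal{E})$ only on $\mathscr{X}\setminus\widetilde{D}$ and then appeal to a "canonical extension across $\widetilde{D}$", citing normality and the local model. Normality extends bundles only across closed subsets of codimension $\geq 2$, not across a divisor, and in higher dimensions the data along the codimension-one strata does not determine the bundle — this is exactly the warning in Remark \ref{rem_local_stack} that a tuple $(\mathscr{E}_0,\widehat{\mathscr{E}}_i,\widehat{\Theta}_i)$ does not determine a $G$-bundle on $\mathscr{X}$ — so the existence and uniqueness of an extension of type $\boldsymbol\rho$ is precisely what would have to be proved and cannot be read off from the formal local model at the generic points of $D$. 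The paper sidesteps this entirely: the inverse is the globally defined contracted product $\mathscr{E}\mapsto\pi^*\mathscr{E}\times_{\pi^*\mathcal{G}_{\boldsymbol\alpha}}\mathscr{F}$, which uses the evaluation map $\pi^*\pi_*\underline{\mathrm{Aut}}(\mathscr{F})\to\underline{\mathrm{Aut}}(\mathscr{F})$ and requires no extension step. Your Step 1 and the tameness/exactness remarks are correct and are indeed what makes the presentation $[Y/\Gamma]$ and Damiolini's theorem applicable, but as written Steps 2--3 do not establish the equivalence.
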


This correspondence carries over $G$-bundles with Higgs field/connection. It has been done in \cite{KSZ23} for curves in characteristic zero, and in \cite{KS20a} for vector bundles over curves in positive characteristic. We first introduce some definitions. A \emph{logahoric $\mathcal{G}_{\boldsymbol\alpha}$-Higgs torsor} is a pair $(\mathcal{E},\vartheta)$, where $\mathcal{E}$ is a parahoric $\mathcal{G}_{\boldsymbol\alpha}$-torsor and $\vartheta : X \rightarrow \mathcal{E}(\mathfrak{g}) \otimes \Omega_X({\rm log}\, D )$ is a logarithmic Higgs field. A \emph{logahoric $\mathcal{G}_{\boldsymbol\alpha}$-connection} is a pair $(\mathcal{V},\nabla')$, where $\mathcal{V}$ is a parahoric $\mathcal{G}_{\boldsymbol\alpha}$-torsor and $\nabla': \mathcal{O}_{\mathcal{V}} \rightarrow \mathcal{O}_{\mathcal{V}} \otimes \Omega_X( {\rm log} \, D )$ is a logarithmic integrable $\mathcal{G}_{\boldsymbol\alpha}$-connection. We define
\begin{itemize}
    \item ${\rm HIG}_{p-1}(X_{\rm log}/k, \mathcal{G}_{\boldsymbol\alpha})$: the category of nilpotent logarhoic $\mathcal{G}_{\boldsymbol\alpha}$-Higgs torsors of exponent $\leq p-1$ on $X$;
    \item ${\rm MIC}_{p-1}(X_{\rm log}/k, \mathcal{G}_{\boldsymbol\alpha})$: the category of nilpotent logahoric $\mathcal{G}_{\boldsymbol\alpha}$-connections of exponent $\leq p-1$ on $X$ such that the semisimple parts of residues are $\boldsymbol\alpha$.
\end{itemize}
Note that, in the above definition, the residue of a logarithmic connection on a parahoric torsor is not necessarily to be nilpotent and the semisimple part is given by the weights $\boldsymbol\alpha$.

\begin{prop}[Corollary \ref{cor_parah_equiv_G_Higgs} and Proposition \ref{prop_parah_equiv_G_conn}]
    The categories
    \begin{itemize}
        \item ${\rm HIG}_{p-1}(X_{\rm log}/k, \mathcal{G}_{\boldsymbol\alpha})$ and ${\rm HIG}_{p-1}(\mathscr{X}_{\rm log}/k,G,\boldsymbol\rho)$,
        \item ${\rm MIC}_{p-1}(X_{\rm log}/k, \mathcal{G}_{\boldsymbol\alpha})$ and ${\rm MIC}_{p-1}(\mathscr{X}_{\rm log}/k,G,\boldsymbol\rho)$
    \end{itemize}
    are equivalent.
\end{prop}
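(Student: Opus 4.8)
The plan is to upgrade the torsor equivalence of Proposition~\ref{prop_parah_equiv_G} to one that remembers the Higgs field, resp. the connection. Write $\pi\colon\mathscr X\to X$ for the coarse moduli morphism of the root stack, and recall that the functor of Proposition~\ref{prop_parah_equiv_G} sends a $G$-bundle $\mathscr F$ on $\mathscr X$ of type $\boldsymbol\rho$ to the parahoric $\mathcal G_{\boldsymbol\alpha}$-torsor $\mathcal E$ built from $\pi_*$ of its associated sheaves, and that this functor is an equivalence with quasi-inverse essentially $\pi^*$. Since $\mathscr X$ is tame — each $d_i$ is prime to $p$ — the natural map $\pi^*\Omega_X(\log D)\to\Omega_{\mathscr X}(\log\widetilde D)$ is an isomorphism; I would verify this by the local computation on the charts $[\mathbb A^1_t/\mu_{d_i}]$, where $dt^{1/d_i}$ and $dt/t$ differ by a unit times $d_i$, which is invertible in characteristic $p$.

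For the Higgs case I would then argue as follows. By the construction of the adjoint bundle of a parahoric torsor (Lemma~\ref{lem_parah_grp} and the bookkeeping of weights) one has $\mathcal E(\mathfrak g)\cong\pi_*\big(\mathscr F(\mathfrak g)\big)$, so the projection formula together with the isomorphism of logarithmic cotangent sheaves gives
\[
\pi_*\!\big(\mathscr F(\mathfrak g)\otimes\Omega_{\mathscr X}(\log\widetilde D)\big)\;\cong\;\mathcal E(\mathfrak g)\otimes\Omega_X(\log D).
\]
Taking global sections, and using that $\pi_*$ is exact for the tame stack $\mathscr X$ so that $H^0(\mathscr X,-)=H^0(X,\pi_*-)$, shows that logarithmic Higgs fields on $\mathscr F$ correspond bijectively and functorially to logarithmic Higgs fields on $\mathcal E$. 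Integrability ($\vartheta\wedge\vartheta=0$) and the condition of being nilpotent of exponent $\leq p-1$ are pointwise conditions on the adjoint bundle, hence respected since the adjoint action is intertwined by the equivalence; this gives Corollary~\ref{cor_parah_equiv_G_Higgs}.

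For connections the same pushforward/pullback machinery transports a logarithmic connection $\nabla$ on $\mathscr F$ to a logarithmic $\mathcal G_{\boldsymbol\alpha}$-connection $\nabla'$ on $\mathcal E$ and back, the Leibniz rule being carried across by $\pi^*\Omega_X(\log D)\cong\Omega_{\mathscr X}(\log\widetilde D)$ and integrability being preserved. The new ingredient — the reason this is a proposition and not a formal corollary — is the behaviour of residues. On $\mathscr X$ the residue ${\rm res}_{\widetilde D_i}\nabla$ is nilpotent of exponent $\leq p-1$, whereas on $X$ the parahoric lattice defining $\mathcal G_{\boldsymbol\alpha}$ shifts the eigenvalues of the residue by the fractional weight $\alpha_i$. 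I would make this precise by a local analysis near each $D_i$ on $\Spec k[[t^{1/d_i}]]$: the $\mu_{d_i}$-equivariant structure encoding $\rho_i$ changes the residue of $\nabla'$ by exactly the cocharacter $\alpha_i$, so the semisimple part of ${\rm res}_{D_i}\nabla'$ is $\alpha_i$ and its nilpotent part has the same exponent as ${\rm res}_{\widetilde D_i}\nabla$; conversely a logahoric connection with semisimple residue part $\boldsymbol\alpha$ and nilpotent part of exponent $\leq p-1$ pulls back to one on $\mathscr X$ with nilpotent residues of the same exponent. This is precisely where the argument departs from the Biswas--Iyer--Simpson correspondence for parabolic vector bundles, since one must track the $G$-structure and the maximal torus $T$ through the shift. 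Granting this, one obtains Proposition~\ref{prop_parah_equiv_G_conn}, and combining with Proposition~\ref{prop_parah_equiv_G} — whose equivalence on underlying torsors, together with the fact that morphisms of logahoric Higgs torsors (resp. connections) are exactly morphisms of torsors intertwining the extra structure, immediately upgrades to the categories with structure — yields the asserted equivalences.

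The hard part will be the residue comparison in the connection case: showing uniformly in the $G$-structure that a nilpotent residue of exponent $\leq p-1$ upstairs matches exactly a residue with semisimple part $\boldsymbol\alpha$ and nilpotent part of exponent $\leq p-1$ downstairs, and identifying the parahoric Lie-algebra lattice as the one realising this shift. By contrast, the comparison of logarithmic differentials and the projection-formula identification of adjoint bundles are essentially formal once tameness is invoked.
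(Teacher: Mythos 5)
Your overall strategy is sound and, for the connection half, it is the same mechanism the paper uses; the difference is mainly in execution. For the Higgs half you argue sheaf-theoretically: identify $\pi^*\Omega_X(\log D)\cong\Omega_{\mathscr X}(\log\widetilde D)$ (correct, since each $d_i$ is prime to $p$), claim $\mathcal E(\mathfrak g)\cong\pi_*\bigl(\mathscr F(\mathfrak g)\bigr)$, and conclude by the projection formula and exactness of $\pi_*$. The paper (Proposition \ref{prop_parah_equiv_G_Higgs_stab}) instead works with the explicit local dictionary of Remarks \ref{rem_trans_func}--\ref{rem_Higgs_field}: on $[\Spec\,\widetilde R_i/\mu_{d_i}]$ it conjugates the equivariant field by the torus element $\Delta_i\in T(\widehat{\widetilde R}_i)$ (morally $w_i^{d_i\alpha_i}$), substitutes $z_i=w_i^{d_i}$, glues over a big open subset and invokes Hartogs. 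Your route buys a cleaner global statement without Hartogs, but the burden shifts to the asserted identification $\mathcal E(\mathfrak g)\cong\pi_*\bigl(\mathscr F(\mathfrak g)\bigr)$ as sheaves on all of $X$, which does not follow from Lemma \ref{lem_parah_grp} alone (the paper verifies the corresponding statement only at the formal neighbourhoods of the $\eta_i$, via $G(\widehat{\widetilde R}_i)^{\rho_i}\cong G_{\alpha_i}(\widehat R_i)$, and that is exactly the bookkeeping you are waving at); you should also check that nilpotency of exponent $\leq p-1$ in the sense of Definition \ref{defn_nil} transfers, which is immediate in the paper's formulation because the identification is conjugation by a $T$-valued element.

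For the connections, what you call ``the hard part'' and defer is, in the paper, a one-line gauge computation and is the actual content of Proposition \ref{prop_parah_equiv_G_conn}: gauging the equivariant connection by $\Delta_i$ gives $\nabla'_i(w_i)=d_i\alpha_i\,\frac{dw_i}{w_i}+\Delta_i^{-1}(w_i)\nabla_i(w_i)\Delta_i(w_i)$, which is $\mu_{d_i}$-invariant and descends under $z_i=w_i^{d_i}$ to a connection with residue having semisimple part $\alpha_i$ and nilpotent part of the same exponent as $\mathrm{Res}_{\widetilde D_i}\nabla$. Your sketch names precisely this shift, so there is no wrong idea, but as written the proposal stops short of the computation that makes the adjustedness condition (and hence the definition of ${\rm MIC}_{p-1}(X_{\rm log}/k,\mathcal G_{\boldsymbol\alpha})$) match up; note also that a connection cannot be transported by ``the same pushforward/pullback machinery'' as an $\mathcal O$-linear Higgs field — one really does have to pass through the trivialization change by $\Delta_i$, which is where the extra term $\alpha_i\,dz_i/z_i$ comes from.
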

\noindent Furthermore, the above equivalences also hold under $R$-stability conditions for $G$-Higgs bundles and flat $G$-bundles. Therefore, we follow the diagram below and  prove a logahoric version of the nonabelian Hodge correspondence:
\begin{equation*}
\begin{tikzcd}
{\rm HIG}_{p-1}(\mathscr{X}'_{\rm log}/k , G, \boldsymbol\rho) \arrow[rr, "{\rm Theorem} \,\ref{thm_HIG_MIC_G_log_stack}", equal] \arrow[dd, "{\rm Corollary} \, \ref{cor_parah_equiv_G_Higgs}", equal] & & {\rm MIC}_{p-1}(\mathscr{X}_{\rm log}/k, G ,\boldsymbol\rho^p) \arrow[dd,"{\rm Proposition} \, \ref{prop_parah_equiv_G_conn}", equal] \\
& & \\
{\rm HIG}_{p-1}(X'_{\rm log}/k,\mathcal{G}_{\boldsymbol\alpha}) \arrow[rr, dotted, dash] & & {\rm MIC}_{p-1}(X_{\rm log}/k, \mathcal{G}_{\boldsymbol\alpha'})
\end{tikzcd}
\end{equation*}
where $\boldsymbol\rho^p = \{\rho_i^{p}, 1 \leq i \leq s\}$ and $\boldsymbol\alpha'$ is a collection of weights determined by $\boldsymbol\rho^p$.

\begin{thm}[Theorem \ref{thm_log_NAHC}]\label{thm_main_result_log}
Suppose that $(X,D)$ is $W_2(k)$-liftable. The categories ${\rm HIG}_{p-1}(X'_{\rm log}/k,\mathcal{G}_{\boldsymbol\alpha})$ and ${\rm MIC}_{p-1}(X_{\rm log}/k, \mathcal{G}_{\boldsymbol\alpha'})$ are equivalent. Moreover, the equivalence preserves stability conditions.
\end{thm}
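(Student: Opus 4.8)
The plan is to read the desired equivalence directly off the square displayed above: it is the composition of the three equivalences furnished by Corollary~\ref{cor_parah_equiv_G_Higgs}, Theorem~\ref{thm_HIG_MIC_G_log_stack} and Proposition~\ref{prop_parah_equiv_G_conn}. Explicitly, starting from a nilpotent logahoric $\mathcal{G}_{\boldsymbol\alpha}$-Higgs torsor $(\mathcal{E},\vartheta)$ of exponent $\leq p-1$ on $X'$, I would first apply the parahoric--root-stack dictionary for Higgs torsors (Corollary~\ref{cor_parah_equiv_G_Higgs}) to obtain a nilpotent logarithmic $G$-Higgs bundle on $\mathscr{X}'$ of type $\boldsymbol\rho$; then apply the root-stack nonabelian Hodge correspondence of Theorem~\ref{thm_HIG_MIC_G_log_stack}, i.e.\ the inverse Cartier transform via exponential twisting, to get a nilpotent logarithmic flat $G$-bundle on $\mathscr{X}$; and finally descend along Proposition~\ref{prop_parah_equiv_G_conn} to a nilpotent logahoric $\mathcal{G}_{\boldsymbol\alpha'}$-connection on $X$. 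The quasi-inverse is the composite run in the opposite direction, using the Cartier transform in the middle. Since each of the three functors is an equivalence of categories, so is the composite; functoriality is likewise inherited step by step.

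The step that needs genuine care---this is precisely why the bottom arrow of the diagram is drawn dotted rather than written as a tautology---is the bookkeeping of the parahoric structure, namely how the type and the associated weights transform under the Frobenius-twisted correspondence on $\mathscr{X}$. The inverse Cartier transform is built out of a Frobenius pullback, so a $G$-bundle of type $\boldsymbol\rho=\{\rho_i\}$ on $\mathscr{X}'$ produces a flat $G$-bundle on $\mathscr{X}$ whose residue (local monodromy) data is governed by $\boldsymbol\rho^p=\{\rho_i^p\}$, and correspondingly the parahoric group scheme on $(X,D)$ passes from $\mathcal{G}_{\boldsymbol\alpha}$ to $\mathcal{G}_{\boldsymbol\alpha'}$, where $\boldsymbol\alpha'$ is the collection of tame weights attached to $\boldsymbol\rho^p$; here one uses that each $d_i$ is coprime to $p$, so $\rho_i^p:\mu_{d_i}\to T$ is again a representation with the same denominator and $\mathcal{G}_{\boldsymbol\alpha'}$ is again a parahoric group scheme of the expected kind. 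I would also verify that the equivalence on the root stack respects the splitting of the residue of a logahoric connection into its semisimple and nilpotent parts: the semisimple part is the one prescribed by the weights and is the piece that gets multiplied by $p$ under the inverse Cartier transform, while the nilpotent part, having exponent $\leq p-1$, is exactly the datum matched with the (nilpotent, exponent $\leq p-1$) Higgs field by the exponential-twisting correspondence of \S\ref{sect_HIG_MIC_G}. This is the point at which the goodness of $p$ for $G$ and the good properties of Sobaje's exponential map are used, just as in the non-logarithmic case. I expect this weight/type matching $\boldsymbol\alpha \mapsto \boldsymbol\alpha'$ to be the only real obstacle; everything else is a formal composition.

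It remains to record two preliminaries. First, the $W_2$-liftability hypothesis: Theorem~\ref{thm_HIG_MIC_G_log_stack} requires $(\mathscr{X},\widetilde D)$ to be $W_2$-liftable, so I would note the standard tameness-driven fact that a $W_2(k)$-lifting of $(X,D)$ together with the root-stack data $(\boldsymbol\alpha,\boldsymbol d)$, the $d_i$ being prime to $p$, induces a $W_2$-lifting of $\mathscr{X}$ and of $\widetilde D$; this feeds the hypothesis of Theorem~\ref{thm_main_result_log} correctly into the middle row of the diagram. Second, the stability claim: nothing new is needed, since Theorem~\ref{thm_HIG_MIC_G_log_stack} already preserves $R$-(semi)stability (tested on the adjoint and on associated representation bundles, using that the Cartier and inverse Cartier transforms commute with reductions of structure group and are compatible with the relevant slope computations), and both Corollary~\ref{cor_parah_equiv_G_Higgs} and Proposition~\ref{prop_parah_equiv_G_conn} are compatible with parabolic degrees and slopes under the root-stack--parahoric dictionary, hence preserve the log-$R$-(semi)stability of \cite{KSZ23}. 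Composing the three, the equivalence of Theorem~\ref{thm_main_result_log} preserves stability and semistability, which completes the argument.
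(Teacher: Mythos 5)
Your argument is essentially the paper's: the equivalence is obtained by composing Corollary~\ref{cor_parah_equiv_G_Higgs}, Theorem~\ref{thm_HIG_MIC_G_log_stack} and Proposition~\ref{prop_parah_equiv_G_conn} (and, for stability, their $R$-(semi)stable versions, Theorem~\ref{thm_HIG_MIC_G_stab_stack} and Corollary~\ref{cor_parah_equiv_G_conn_stab}), and the type/weight bookkeeping $\boldsymbol\rho\mapsto\boldsymbol\rho^p$, $\boldsymbol\alpha\mapsto\boldsymbol\alpha'$ that you worry about is already built into those cited results (Remark~\ref{rem_type} and Proposition~\ref{prop_parah_equiv_G_conn}), so no extra verification is needed there.

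The one place where you diverge is the only non-formal ingredient of the proof: feeding the hypothesis of Theorem~\ref{thm_HIG_MIC_G_log_stack}, i.e.\ showing that a $W_2(k)$-lifting of $(X,D)$ yields a $W_2$-lifting of $(\mathscr{X},\widetilde D)$. You dismiss this as a ``standard tameness-driven fact'' without argument, whereas the paper treats it as the point that actually requires proof: it passes through a Kawamata covering $Y\to X$ \cite{Kaw81}, uses that $Y\to\mathscr{X}$ is surjective \'etale \cite{Sim11} (valid here because each $d_i$ is prime to $p$), and invokes the liftability of the pair $(Y,D')$ from \cite{XW13} (cf.\ \cite{KS20a} for curves), so that the lifting is verified on a presentation of $\mathscr{X}$ in the sense of \S\ref{subsect_Higgs_conn_stack}. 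Your implicit alternative --- lift the root stack directly by forming $\mathscr{X}_{(X_2,D_2,\boldsymbol d)}$ from a $W_2$-lifting $(X_2,D_2)$, which reduces mod $p$ to $\mathscr{X}$ together with a lift of $\widetilde D$ --- does work and is arguably more direct than the paper's route, but as written it is an assertion, not an argument; you should either spell out this construction (and check it produces a lifting in the groupoid-presentation sense used by the paper) or cite a precise statement. With that step supplied, the rest of your proposal is a faithful reproduction of the paper's proof.
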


\hfill{\space}

We make some further remarks and explain relations to previous work as follows.

Ogus--Vologodsky established the nonabelian Hodge correspondence in positive characteristic for $G={\rm GL}_n$ \cite{OV07} and the logarithmic case was studied by Schepler \cite{Sch05}. Our work is a generalization of Ogus--Vologodsky's correspondence for reductive groups and we also extend the results to parahoric torsors. Also, Chen--Zhu introduced an analogue of Hitchin's equations for principal bundles in positive characteristic
\begin{align*}
\begin{cases}
    \psi(\nabla - \theta) = 0 \\
    ((\nabla - \theta) \otimes \nabla^{can})(\psi(\nabla)) = 0 \ ,
\end{cases}
\end{align*}
where $\nabla$ is a connection, $\theta$ is a Higgs field and $\psi$ is the $p$-curvature \cite[Introduction]{CZ15}. The main results in this paper (Theorem \ref{thm_HIG_MIC_G} and Theorem \ref{thm_log_NAHC}) provide solutions for these equations in higher dimensional case.

Parahoric torsors on curves were studied by many groups of people. Heinloth studied uniformization property of parahoric torsors \cite{Hei10} and Yun studied parahoric Higgs torsors in \cite{Yun11}. After that,  Balaji--Seshadri constructed the moduli space of parahoric torsors in the case of curves \cite{BS15}. For higher dimensional varieties, we give the definition of parahoric group schemes (Definition \ref{defn_parah_grp_sch}), which is inspired by Balaji--Pandey's result \cite[Theorem 5.4]{BP22}, and prove that parahoric torsors are equivalent to principal bundles on an appropriate root stack (Proposition \ref{prop_parah_equiv_G}).

The $R$-stability condition for $G$-bundles is introduced by Ramanathan in the case of curves \cite{Ram75,Ram96a}, and studied further in higher dimensional case \cite{AB01,GLSS08}. When $G$ is a classical group, the $R$-stability condition of a principal bundle is equivalent to the slope stability condition of the corresponding associated bundle in the case of curves. We refer the reader to \cite[\S 6]{CS21} and \cite{Yan22} for more details. In a similar way, Heinloth define a stability condition for parahoric torsors \cite[\S 1]{Hei17}. The stability condition he considered is equivalent to the slope stability condition, which does not include the information of weights. On the other hand, the stability condition considered for parahoric torsors in this paper is an analogue of that given by Balaji--Seshadri \cite[\S 6]{BS15} and includes the information of weights (Definition \ref{defn_stab_cond_parah}), which is equivalent to the stability condition of the corresponding parabolic bundles when $G={\rm GL}_n$.

In the logarithmic case, the residues of nilpotent logarithmic connections are not nilpotent in general and the semisimple part of the residue is given by the corresponding weight. This phenomenon actually comes from the establishment of the equivalence between parahoric torsors on $X$ and principal bundles on the root stack $\mathscr{X}$. In positive characteristic, this phenomenon first appears in \cite{KS20a}, where the authors studied the case of $G={\rm GL}_n$ and named the condition as \emph{adjustedness} \cite[Definition 2.9]{KS20a}. Later on, the authors generalized the result to parahoric torsors on curves \cite{LS21}, which also generalized Chen--Zhu's result \cite{CZ15}. Moreover, in characteristic zero, a similar discussion is also given in \cite[\S 5]{Sim90}.

\section{Preliminaries}\label{sect_prel}

\subsection{G-Higgs bundles and flat G-bundles}\label{subsect_G_Higgs_flat}

Let $X$ be a smooth variety over a perfect field $k$ in positive characteristic $p$. Denote by $T_{X/k}$ (resp. $\Omega_{X/k}$) the relative tangent (resp. cotangent) sheaf. If there is no ambiguity, we use the notation $T_X$ and $\Omega_X$ for simplicity. We have the following well-known diagram about the Frobenius morphism:
\begin{center}
\begin{tikzcd}
X \arrow[rr, "F_{X/k}" description] \arrow[rrd] \arrow[rrrr, bend left, "F_X" description] & & X' \arrow[d] \arrow[rr, "\pi_{X/k}" description] & & X \arrow[d]\\
& & \Spec \, k \arrow[rr, "F_k" description] & & \Spec \, k
\end{tikzcd}
\end{center}
where
\begin{itemize}
    \item $X':= X \times_{\Spec \, k , F_k} \Spec \, k$,
    \item $F_k$ (resp. $F_X$) is the absolute Frobenius morphism of $\Spec \, k$ (resp. $X$),
    \item $F_{X/k}$ is the relative Frobenius morphism.
\end{itemize}
Now let $G$ be a connected split reductive linear algebraic group over $k$ with maximal torus $T$. Let $\mathfrak{g}$ (resp. $\mathfrak{t}$) be the Lie algebra of $G$ (resp. $T$). Let $E$ be a $G$-bundle on $X$ with adjoint bundle $E(\mathfrak{g}):=E \times_{G} \mathfrak{g}$. We have a short exact sequence called the \emph{Atiyah sequence}
\begin{align*}
    0 \rightarrow E(\mathfrak{g}) \rightarrow {\rm At}(E) \xrightarrow{q} T_X \rightarrow 0,
\end{align*}
where ${\rm At}(E)$ is the Atiyah algebroid (see \cite[A.2]{CZ15} or \cite{Bis10} for instance).

\begin{defn}\label{defn_lambda_connection}
    Let $\lambda$ be an element in $k$. A \emph{$\lambda$-connection} on $E$ is a $\lambda$-twisted splitting of the Atiyah sequence, that is a morphism $\nabla: T_X \rightarrow {\rm At}(E)$ such that $q \circ \nabla =  \lambda \cdot {\rm id}$. A $\lambda$-connection $\nabla$ is \emph{integrable} if $[\nabla(u),\nabla(v)] = \lambda \nabla([u,v])$ for any $u,v \in T_X$. Such a pair $(E,\nabla)$ is also called a \emph{$\lambda$-connection} for convenience. The \emph{$p$-curvature} $\psi_\nabla: T_X \rightarrow {\rm At}(E)$ of a $\lambda$-connection $\nabla$ is defined by the formula
    \begin{align*}
        \psi_{\nabla} (v) = (\nabla(v))^p - \lambda^{p-1} \nabla (v^{[p]})
    \end{align*}
    for $v \in T_X$. If there is no ambiguity, we use the notation $\psi := \psi_\nabla$ for simplicity.
\end{defn}

\begin{rem}
The $p$-curvature $\psi$ of a $\lambda$-connection is $p$-linear and satisfies $q \circ \psi = 0$. Then $\psi$ factor through $E(\mathfrak{g})$ and thus can be regarded as a section $E(\mathfrak{g}) \otimes F^*_{X/k} \Omega_{X'}$. A similar discussion was given in \cite[\S 5]{HZ23}. Moreover, a $\lambda$-connection $\nabla$ can be regarded as a map $T_U \rightarrow E|_U(\mathfrak{g})$ locally on affine open subset $U$.
\end{rem}

\begin{rem}\label{rem_Higgs_conn}
In this paper, two cases are of most concern: $0$-connections ($G$-Higgs bundles) and $1$-connections (flat $G$-bundles).

A $0$-connection $\theta$ satisfies $q \circ \theta = 0$. Then $\theta$ factors through $E(\mathfrak{g})$. If $\theta$ is integrable, we obtain a Higgs field $\theta: \mathcal{O}_X \rightarrow E(\mathfrak{g}) \otimes \Omega_X$. Therefore, such a pair $(E,\theta)$ is exactly a $G$-Higgs bundle.

A $1$-connection $\nabla$ is exactly a connection. If $\nabla$ is integrable, the pair $(V,\nabla)$ is a flat $G$-bundle. Furthermore, there is an equivalent definition of connections on $G$-bundles, which is also frequently used in this paper. We give it here for the convenience of the reader and refer to \cite[A.1]{CZ15} for more details. There is a canonical connection on $\mathcal{O}_{G \times X}$ and denote it by $\nabla_G$. A(n) \emph{(integrable) $G$-connection} $\nabla$ on $E$ is a(n) (integrable) connection $\nabla: \mathcal{O}_E \rightarrow \mathcal{O}_E \otimes_{\mathcal{O}_X} \Omega_{X}$ compatible with the multiplication of $\mathcal{O}_E$ such that the following diagram commutes
\begin{equation*}
	\begin{tikzcd}
	\mathcal{O}_E \arrow[r, "a"] \arrow[d, "\nabla"] & \mathcal{O}_E \otimes_{\mathcal{O}_X} \mathcal{O}_{G \times X} \arrow[d,"\nabla \otimes 1 + 1 \otimes \nabla_G"] \\
	\mathcal{O}_E \otimes_{\mathcal{O}_X} \Omega_{X} \arrow[r, "a \otimes 1"] & (\mathcal{O}_E \otimes_{\mathcal{O}_X} \mathcal{O}_{G \times X}) \otimes_{\mathcal{O}_X} \Omega_{X}
	\end{tikzcd}
\end{equation*}
where $a$ is the co-action map. Moreover, for each $v \in F^*_{X/k} T_{X'}$, the $p$-curvature $\psi$ of $\nabla$ induces the following commutative diagram
\begin{equation*}
	\begin{tikzcd}
	\mathcal{O}_E \arrow[r, "a"] \arrow[d, "\psi(v)"] & \mathcal{O}_E \otimes_{\mathcal{O}_X} \mathcal{O}_{G \times X} \arrow[d,"\psi(v) \otimes 1"] \\
	\mathcal{O}_E \arrow[r, "a"] & \mathcal{O}_E \otimes_{\mathcal{O}_X} \mathcal{O}_{G \times X}
	\end{tikzcd}
\end{equation*}
and we refer the reader to \cite[A.6]{CZ15} for more details.
\end{rem}

In the following, we consider stability conditions and suppose that $X$ is a smooth projective variety. We fix an ample line bundle $H$ on $X$. The \emph{degree} of a vector bundle $E$ on $X$ is given with respect to $H$ and denote it by $\deg_H E$. If there is no ambiguity, we use the notation $\deg E$ and omit the subscript $H$. We briefly review the stability condition for principal bundles on higher dimensional varieties (see \cite[Definition 3]{Ram78}, \cite[Definition 1.1]{AB01} and \cite[\S 3.2]{GLSS08} for instance), which is first considered by Ramanathan \cite{Ram75,Ram96a,Ram96b}. In this paper, this stability condition is called the \emph{$R$-stability condition}.

Let $P$ be a parabolic subgroup of $G$ together with a reduction of structure group $\sigma: X^{\rm big} \rightarrow (E|_{X^{\rm big}})/P$, where $X^{\rm big} \subseteq X$ is a big open subset. In this paper, a big open subset $X^{\rm big} \subseteq X$ means that ${\rm codim}(X \backslash X^{\rm big}) \geq 2$. We get a $P$-bundle $E_\sigma$ in the following way
\begin{center}
	\begin{tikzcd}
	(E|_{X^{\rm big}})_\sigma \arrow[r, dotted] \arrow[d, dotted] & E|_{X^{\rm big}} \arrow[d] \\
	X^{\rm big} \arrow[r,"\sigma"] & (E|_{X^{\rm big}})/P
	\end{tikzcd}
\end{center}
Given a character $\chi: P \rightarrow \mathbb{G}_m$, we obtain a line bundle $\chi_* ((E|_{X^{\rm big}})_\sigma)$ on $X^{\rm big}$ and denote it by $L(\sigma,\chi)$.
	
\begin{defn}[$R$-stability condition]\label{defn_R_stab_G}
	A $G$-bundle $E$ is \emph{$R$-semistable} (resp. \emph{$R$-stable}), if
	\begin{itemize}
	\item for any maximal parabolic subgroup $P \subseteq G$;
	\item for any antidominant character $\chi: P \rightarrow \mathbb{G}_m$ trivial on the center;
    \item for any big open subset $X^{\rm big} \subseteq X$;
	\item for any reduction of structure group $\sigma: X^{\rm big} \rightarrow (E|
_{X^{\rm big}})/P$,
\end{itemize}
we have
\begin{align*}
\deg L(\sigma,\chi) \geq 0 \text{ (resp. $> 0$)}.
\end{align*}
\end{defn}

Now we consider the $R$-stability condition of $\lambda$-connections $(E,\nabla)$. To simplify the notation, we give the following definition for reductions of structure group on $X$ rather than on $X^{\rm big}$. Let $\sigma: X \rightarrow E/P$ be a reduction of structure group. We obtain a $P$-bundle $E_\sigma$ and two natural maps
\begin{align*}
	 E_\sigma(\mathfrak{p}) \rightarrow E({\mathfrak{g}}), \quad {\rm At}(E_\sigma) \rightarrow {\rm At}(E).
\end{align*}
The reduction of structure group $\sigma$ is said to be \emph{$\nabla$-compatible} if there is a $\lambda$-connection $\nabla_\sigma: T_X \rightarrow {\rm At}(E_\sigma)$ such that the diagram commutes
\begin{center}
\begin{tikzcd}
	{\rm At}(E) & T_X \arrow[l, "\nabla" description] \arrow[ld, "\nabla_\sigma" description, dotted] \\
	{\rm At}(E_\sigma) \arrow[u] \ .&
\end{tikzcd}
\end{center}

\begin{defn}\label{defn_stab_lambda}
A $\lambda$-connection $(E,\nabla)$ is \emph{$R$-semistable} (resp. \emph{$R$-stable}), if
\begin{itemize}
	\item for any maximal parabolic subgroup $P \subseteq G$;
	\item for any antidominant character $\chi: P \rightarrow \mathbb{G}_m$ trivial on the center;
    \item for any big open subset $X^{\rm big} \subseteq X$;
	\item for any $\nabla$-compatible reduction of structure group $\sigma: X^{\rm big} \rightarrow (E|
_{X^{\rm big}})/P$,
\end{itemize}
we have
\begin{align*}
    \deg L(\sigma,\chi) \geq 0 \text{ (resp. $> 0$)}.
\end{align*}
\end{defn}

\begin{rem}
We give an equivalent definition of the $R$-stability condition in the case of $G$-Higgs bundles ($0$-connections). Let $(E,\theta)$ be a $G$-Higgs bundle. A reduction of structure group $\sigma$ is said to be \emph{$\theta$-compatible} if there is a lifting $\theta_\sigma: X \rightarrow E_{\sigma}(\mathfrak{p}) \otimes \Omega_{X}$ such that the diagram commutes
\begin{center}
\begin{tikzcd}
	& E_\sigma(\mathfrak{p}) \otimes \Omega_{X} \arrow[d] \\
		\mathcal{O}_X \arrow[r,"\theta"] \arrow[ur,"\theta_\sigma", dotted] & E(\mathfrak{g}) \otimes \Omega_{X}.
\end{tikzcd}
\end{center}

A $G$-Higgs bundle $(E,\theta)$ is \emph{$R$-semistable} (resp. \emph{$R$-stable}), if
\begin{itemize}
	\item for any maximal parabolic subgroup $P \subseteq G$;
	\item for any antidominant character $\chi: P \rightarrow \mathbb{G}_m$ trivial on the center;
    \item for any big open subset $X^{\rm big} \subseteq X$;
	\item for any $\nabla$-compatible reduction of structure group $\sigma: X^{\rm big} \rightarrow (E|
_{X^{\rm big}})/P$,
\end{itemize}
we have
\begin{align*}
    \deg L(\sigma,\chi) \geq 0 \text{ (resp. $> 0$)}.
\end{align*}

By Remark \ref{rem_Higgs_conn}, when $\lambda=0$, a $\nabla$-compatible reduction is equivalent to a $\theta$-compatible reduction. Therefore, the above definition is equivalent to Definition \ref{defn_stab_lambda}.
\end{rem}

\subsection{Nilpotency}\label{subsect_nil}

We first review the nilpotency for Higgs bundles and flat bundles.

\begin{defn}[{\cite[P.860]{LSZ15}}]\label{claasical}
A Higgs bundle $(E,\theta)$ (resp. flat bundle $(V,\nabla)$)  is said to be \emph{nilpotent of exponent $\leq n$} if there exists a covering of $X$ by open affine subsets $U_i$ such that for each $U_i$ and arbitrary sections $\partial_1,\dots,\partial_{n+1}$ of $T_{U_i}$ (resp. $F^*_{U'_i/k} T_{U'_i}$), we have
$$\theta|_{U_i}(\partial_1)\cdots\theta|_{U_i}(\partial_{n+1})=0\ (\text{resp.}\ \psi|_{U_i}(\partial_1) \cdots  \psi|_{U_i}(\partial_{n+1}) =0).$$
\end{defn}

Here is an equivalent description of the nilpotency.

\begin{prop}\label{equivalent}
A rank $r$ Higgs bundle $(E,\theta)$ (resp. flat bundle $(V,\nabla)$) is nilpotent of exponent $\leq n$ if and only if there exists a covering of $X$ by affine open subsets $U_i$ such that for each $U_i$, the set
$$\{\theta|_{U_i} (\partial)\ |\ \partial\ \text{is a section of }T_{U_i} \} \quad (resp. \text{ } \{\psi|_{U_i} (\partial)\ |\ \partial\ \text{is a section of }F^*_{U'_i/k}T_{U'_i} \})$$ lies in $\mathfrak{u}(\mathcal{O}_{U_i})$, where $\mathfrak{u}$ is the Lie algebra of a unipotent radical $U$ of a parabolic subgroup $P$ of ${\rm GL}_r$ such that the nilpotency class of $U$ is smaller than $n+1$.
\end{prop}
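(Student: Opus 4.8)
The plan is to reduce the equivalence to an elementary fact about nilpotent submodules of matrix algebras and then to propagate that fact along the given affine cover. I treat the Higgs case throughout; the statement for flat bundles is obtained verbatim by replacing $\theta$ with the $p$-curvature $\psi$ and $T_X$ with $F^*_{X/k}T_{X'}$. On an affine open $U=\Spec R$ of $X$ over which $E$ is trivial, fix an identification $\mathrm{End}(E|_U)\cong M_r(R)$ and set $A:=\mathrm{im}(\theta|_U)\subseteq M_r(R)$; since $\theta|_U$ is $\mathcal{O}$-linear this image is automatically an $R$-submodule, equal to $\{\theta|_U(\partial)\mid \partial\in\Gamma(U,T_U)\}$. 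With this notation, nilpotency of exponent $\le n$ on a cover $\{U_i\}$ says exactly that the corresponding $A$ satisfies $A^{n+1}=0$, i.e.\ every product of $n+1$ of its elements vanishes. The linear-algebra input is the standard description of parabolics of $\mathrm{GL}_r$: the stabilizer $P$ of a flag $0=V_0\subsetneq V_1\subsetneq\cdots\subsetneq V_m=k^r$ has unipotent radical of nilpotency class $m-1$, with Lie algebra $\mathfrak{u}=\{\phi\in\mathfrak{gl}_r\mid \phi V_j\subseteq V_{j-1}\}$; in particular a product of $m$ matrices in $\mathfrak{u}$ already annihilates $V_m$.

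The implication $(\Leftarrow)$ is then immediate. If $\mathrm{im}(\theta|_{U_i})\subseteq\mathfrak{u}(\mathcal{O}_{U_i})$ for a parabolic whose unipotent radical has nilpotency class $<n+1$, the underlying flag has $m\le n+1$ steps, so a product of $n+1$ elements of $\mathfrak{u}(\mathcal{O}_{U_i})$ carries $V_m\otimes\mathcal{O}_{U_i}$ into $V_{m-n-1}\otimes\mathcal{O}_{U_i}=0$ and therefore vanishes. Hence $(E,\theta)$ is nilpotent of exponent $\le n$ on the same cover.

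For $(\Rightarrow)$ I refine the given cover point by point. Fix $x$ in a member $U=\Spec R$ of the cover. Reducing modulo $\mathfrak{m}_x$, the $k(x)$-subspace $A(x)\subseteq M_r(k(x))$ still satisfies $A(x)^{n+1}=0$, so after discarding repetitions the chain $k(x)^r\supseteq A(x)\,k(x)^r\supseteq A(x)^2k(x)^r\supseteq\cdots\supseteq A(x)^{n+1}k(x)^r=0$ is a flag $0=\bar W_0\subsetneq\cdots\subsetneq\bar W_m=k(x)^r$ with $m\le n+1$ and $A(x)\bar W_j\subseteq\bar W_{j-1}$. Choose vectors in $\mathcal{O}_V^r$ on a neighborhood $V$ of $x$ reducing at $x$ to a basis of $k(x)^r$ adapted to $\bar W_\bullet$; by Nakayama they form an $\mathcal{O}_V$-basis after shrinking $V$, and the associated flag $W_\bullet\subseteq\mathcal{O}_V^r$ of free direct summands specializes to $\bar W_\bullet$ at $x$. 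The morphism of coherent sheaves $A|_V\otimes_{\mathcal{O}_V}W_j\to\mathcal{O}_V^r\to\mathcal{O}_V^r/W_{j-1}$ induced by the action vanishes at $x$, hence --- Nakayama once more, applied to its image --- on a smaller neighborhood; shrinking $V$ again gives $A|_V\cdot W_j\subseteq W_{j-1}$ for all $j$. A frame of $E|_V$ adapted to $W_\bullet$ now exhibits $\mathrm{im}(\theta|_V)\subseteq\mathfrak{u}(\mathcal{O}_V)$ for the parabolic stabilizing the corresponding standard coordinate flag, whose unipotent radical has nilpotency class $m-1\le n$. Refining these $V$ to affine opens yields the required cover.

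I expect the only genuine difficulty to be this last passage from the fibre at $x$ to a neighbourhood of $x$: no globally defined subsheaf of $E$ --- such as the image or the kernel of an iterate of $\theta$ --- need be a subbundle, so one cannot work with a single parabolic $P$; instead both the flag and $P$ are allowed to vary with the point, and one invokes Nakayama twice, first to lift an adapted basis and then to propagate the ``flag-lowering'' property of $\theta$. Everything else, including the computation of the nilpotency class of $R_u(P)$ via its lower central series, is standard.
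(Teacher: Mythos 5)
Your reduction to a statement about the $\mathcal{O}(U)$-submodule $A=\mathrm{im}(\theta|_U)\subseteq M_r(\mathcal{O}(U))$ and your proof of the direction $(\Leftarrow)$ are fine, and the linear algebra you invoke is, up to replacing your image filtration by a kernel filtration, the content of the paper's Lemma \ref{lem_linear_alg}. The gap is exactly at the step you single out as the only difficulty, and your fix does not work. Knowing that the composite $\phi_j\colon A|_V\otimes W_j\to\mathcal{O}_V^r\to\mathcal{O}_V^r/W_{j-1}$ has zero fibre at $x$ does not allow an application of Nakayama to $\mathrm{im}(\phi_j)$: for that you would need $\mathrm{im}(\phi_j)\otimes k(x)=0$, whereas what you know is only that the composite $(A\otimes W_j)\otimes k(x)\twoheadrightarrow \mathrm{im}(\phi_j)\otimes k(x)\to(\mathcal{O}_V^r/W_{j-1})\otimes k(x)$ is zero, and the second arrow is not injective unless $\mathrm{im}(\phi_j)$ is already a subbundle at $x$. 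Concretely, on $\mathbb{A}^1$ take $E=\mathcal{O}^2$, $\theta=\left(\begin{smallmatrix}0&0\\ t&0\end{smallmatrix}\right)dt$, $n=1$: at $t=0$ one has $A(0)=0$, so your fibre flag is the one-step flag $0\subset k^2$, its lift is $0\subset\mathcal{O}^2$, and $\theta$ does not lower this flag on any neighbourhood of $0$ (here $\mathrm{im}(\phi_1)=t\,\mathcal{O}e_2\neq 0$ although its fibre map at $0$ vanishes); the parabolic your construction outputs is $\mathrm{GL}_2$ itself, with $\mathfrak{u}=0$, and the desired containment fails.

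Moreover the defect is not merely one of exposition: the fibrewise flag genuinely jumps at special points, and in dimension $\geq 2$ the jumping locus can have codimension $\geq 2$, so it cannot be removed by shrinking, nor by passing to kernels instead of images. For instance on $\mathbb{A}^2$ let $\theta=\theta_0\,dx$ with $\theta_0=\left(\begin{smallmatrix}xy&-x^2\\ y^2&-xy\end{smallmatrix}\right)$; then $\theta_0^2=0$, so $(\mathcal{O}^2,\theta)$ is nilpotent of exponent $\leq 1$ in the sense of Definition \ref{claasical}, but $\ker\theta_0$ is generated by the column $(x,y)$, which is contained in $\mathfrak{m}_0\cdot\mathcal{O}^2$, so near the origin there is no rank-one direct summand $L$ with $\theta_0(\mathcal{O}^2)\subseteq L$ and $\theta_0(L)=0$; hence no single Borel (and a fortiori no parabolic) of $\mathrm{GL}_2$ absorbs the values of $\theta$ on any neighbourhood of $(0,0)$. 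So the point-to-neighbourhood passage cannot be rescued by a Nakayama-type argument, and this example in fact puts real pressure on the ``only if'' direction of the statement itself in dimension $\geq 2$. For comparison, the paper does not attempt this passage: it proves Lemma \ref{lem_linear_alg} for finitely many constant commuting nilpotent matrices over a field, via the flag $V_i=\bigcap_{k_1+\cdots+k_m=i}\ker(A_1^{k_1}\cdots A_m^{k_m})$, and asserts that the proposition follows directly; the ring-level difficulty you correctly isolated is not addressed there, but your proposed resolution of it is incorrect.
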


The proof of this proposition follows directly from the following result in linear algebra.

\begin{lem}\label{lem_linear_alg}
Let $\{A_1,\dots,A_m\}$ be a set of pairwise commutative nilpotent matrices in $\mathfrak{gl}_r$. The equality
$$A_1^{k_1}\cdots A_m^{k_m}=0$$
hold for any nonnegative integers $k_1,\dots, k_m$ such that $k_1+\dots +k_m=n+1$ if and only if $\{A_1,\dots,A_m\}$ lies in the Lie algebra $\mathfrak{u}$ of a nilpotent group $U$ with nilpotency class smaller than $n+1$, where $U$ is the unipotent radical of a parabolic subgroup $P$ of ${\rm GL}_r$.
\end{lem}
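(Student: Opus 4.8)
The plan is to prove Lemma~\ref{lem_linear_alg} by reducing the two-directional statement to a single combinatorial fact about simultaneous block-upper-triangularization, then invoke the structure theory of commuting nilpotent matrices. I would first fix the easy direction: if $\{A_1,\dots,A_m\}\subseteq\mathfrak{u}$, where $U$ is the unipotent radical of a parabolic $P$ of ${\rm GL}_r$ associated to a flag $0=F_0\subsetneq F_1\subsetneq\cdots\subsetneq F_\ell=k^r$ with $\ell-1<n+1$ (i.e.\ $\ell\le n+1$), then every $A_i$ sends $F_j$ into $F_{j-1}$; hence any product $A_{i_1}\cdots A_{i_{n+1}}$ of $n+1$ of them drops the flag $n+1\ge\ell$ steps and is therefore $0$. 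In particular $A_1^{k_1}\cdots A_m^{k_m}=0$ whenever $\sum k_i=n+1$, since that product is a product of $n+1$ elements from the set. Note this direction does not even use commutativity.

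For the converse I would argue as follows. Since $A_1,\dots,A_m$ are pairwise commuting nilpotent endomorphisms of $V=k^r$, they are simultaneously strictly triangularizable: there is a full flag in which all $A_i$ are strictly upper triangular (standard over any field — one builds a common eigenvector for all $A_i$ with eigenvalue $0$ inside $\bigcap\ker A_i\neq 0$, then induct on $V/(\text{that line})$; alternatively pass to the algebraic closure, triangularize, and descend since the flag can be taken rational because the only eigenvalue is $0$). The subtle point is to produce a flag whose \emph{length} is controlled by $n+1$, not merely a full flag of length $r$. The key is to consider the subalgebra $R\subseteq{\rm End}(V)$ generated by $A_1,\dots,A_m$ (and $1$); it is commutative, and the hypothesis $\sum k_i=n+1\Rightarrow A_1^{k_1}\cdots A_m^{k_m}=0$ says precisely that the augmentation ideal $\mathfrak{m}=(A_1,\dots,A_m)\subseteq R$ satisfies $\mathfrak{m}^{n+1}=0$. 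Then I would take the filtration of $V$ by $V\supseteq \mathfrak{m}V\supseteq \mathfrak{m}^2 V\supseteq\cdots\supseteq \mathfrak{m}^{n+1}V=0$; this is a filtration by subspaces of length at most $n+1$, each $A_i$ carries $\mathfrak{m}^jV$ into $\mathfrak{m}^{j+1}V$, and after refining to a flag $0=F_0\subsetneq F_1\subsetneq\cdots\subsetneq F_\ell=V$ that \emph{refines} the $\mathfrak{m}$-adic filtration (discarding repetitions), one still has the property that each $A_i$ drops the refined flag by at least one step whenever it drops the coarse filtration — but I must be careful: refining could destroy the "drop by one step" property within a single graded piece. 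So instead I would \emph{not} refine: I take the parabolic $P=P_\bullet$ to be the stabilizer of the (possibly short) flag $0\subsetneq \mathfrak{m}^{n}V\subsetneq \mathfrak{m}^{n-1}V\subsetneq\cdots\subsetneq\mathfrak{m}V\subsetneq V$ after removing repeated terms; its unipotent radical $U$ consists of automorphisms fixing each graded piece, $\mathfrak{u}$ consists of endomorphisms shifting the filtration up by one, so $A_i\in\mathfrak{u}$ for all $i$, and the nilpotency class of $U$ equals the number of steps minus one, which is $\le n<n+1$. That gives exactly the desired $U$ and $P$.

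The step I expect to be the main obstacle is the precise bookkeeping in the converse: namely verifying that the hypothesis forces $\mathfrak{m}^{n+1}=0$ in the generated algebra $R$ (this requires expanding an arbitrary degree-$(n+1)$ monomial in the $A_i$ and using commutativity to collect it into the form $A_1^{k_1}\cdots A_m^{k_m}$ with $\sum k_i=n+1$, which is immediate once commutativity is invoked), and then confirming that the parabolic attached to the $\mathfrak{m}$-adic filtration has unipotent radical of nilpotency class strictly less than $n+1$ and that all $A_i$ genuinely lie in $\mathfrak{u}$ rather than merely in $\mathfrak{p}$. One has to check that the $\mathfrak{m}$-adic filtration is strictly decreasing until it hits $0$ precisely because, if $\mathfrak{m}^jV=\mathfrak{m}^{j+1}V$ for some $j<n+1$ with $\mathfrak{m}^jV\neq 0$, Nakayama (applicable since $\mathfrak{m}$ is nilpotent, hence automatically in every maximal ideal) would force $\mathfrak{m}^jV=0$, a contradiction; so the distinct terms of the filtration already have length $\le n+1$ with no wasted steps. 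Then Proposition~\ref{equivalent} follows by applying Lemma~\ref{lem_linear_alg} locally on each $U_i$ to the finite set $\{\theta|_{U_i}(\partial_a)\}$ (resp.\ $\{\psi|_{U_i}(\partial_a)\}$) for a basis $\partial_1,\dots,\partial_r$ of $T_{U_i}$ (resp.\ $F^*_{U_i'/k}T_{U_i'}$), noting that nilpotency of exponent $\le n$ in the sense of Definition~\ref{claasical} is exactly the vanishing of all length-$(n+1)$ products of these operators, which by multilinearity over $\mathcal{O}_{U_i}$ is equivalent to the vanishing of all $A_1^{k_1}\cdots A_m^{k_m}$ with $\sum k_i=n+1$; the commutativity hypothesis of the lemma is guaranteed because $[\theta(\partial_a),\theta(\partial_b)]=0$ (integrability, $\lambda=0$) and likewise the $p$-curvature components commute.
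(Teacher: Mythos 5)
Your proof is correct and follows essentially the same route as the paper: the forward direction is the same flag-shifting computation, and your converse, like the paper's, produces a flag of at most $n+1$ steps that every $A_i$ strictly lowers and then takes its stabilizer parabolic, whose unipotent radical has nilpotency class at most $n$. The only real difference is the choice of flag: you use the descending image filtration $V\supseteq\mathfrak{m}V\supseteq\cdots\supseteq\mathfrak{m}^{n+1}V=0$ coming from the augmentation ideal of the commutative subalgebra generated by the $A_i$ (your Nakayama strictness check is harmless but not needed, since that filtration has at most $n+1$ jumps in any case), whereas the paper uses the ascending kernel filtration $V_i=\bigcap_{k_1+\cdots+k_m=i}\ker\bigl(A_1^{k_1}\cdots A_m^{k_m}\bigr)$ following Kostant; both yield the required parabolic.
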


\begin{proof}
We first assume that $\{A_1,\dots,A_m\}$ lies in such a Lie algebra $\mathfrak{u}$. Without loss of generality, we may assume that the parabolic subgroup $P$ is in the standard form, that is, elements of $P$ are of
\begin{equation*}
    \begin{pmatrix}
    A_{11}&A_{12}&A_{13}&\cdots&A_{1k}\\
    0&A_{22}&A_{23}&\cdots&A_{2k}\\
    \vdots& \vdots& \vdots&\dots&\vdots\\
    0&0&0&\cdots&A_{kk}\\
    \end{pmatrix},
\end{equation*}
where $A_{ij}$ is a $d_i\times d_j$ block. The unipotent radical $U$ of $P$ is
\begin{equation*}
    \begin{pmatrix}
    I_{d_1}&A_{12}&A_{13}&\cdots&A_{1k}\\
    0& I_{d_2}&A_{23}&\cdots&A_{2k}\\
    \vdots& \vdots& \vdots&\dots&\vdots\\
    0&0&0&\cdots& I_{d_k}\\
    \end{pmatrix},
\end{equation*}
together with its Lie algebra $\mathfrak{u}$
\begin{equation*}
    \begin{pmatrix}
    0&A_{12}&A_{13}&\cdots&A_{1k}\\
    0& 0&A_{23}&\cdots&A_{2k}\\
    \vdots& \vdots& \vdots&\dots&\vdots\\
    0&0&0&\cdots& 0\\
    \end{pmatrix}.
\end{equation*}
Then the equality
$$A_1^{k_1}\cdots A_m^{k_m}=0$$
holds for any nonnegative integers $k_1,\dots, k_m$ such that $k_1+\dots +k_m\geq k$ since $A_i\in \mathfrak{u}$. Since the nilpotency class of $U$ is smaller than $n+1$, we have $k \leq n+1$. Thus, the equality
$$A_1^{k_1}\cdots A_m^{k_m}=0$$
holds for any nonnegative integers $k_1,\dots, k_m$ such that $k_1+\dots +k_m=n+1$.

For the other direction, let  $\{A_1,\dots,A_m\}$ be a set of pairwise commutative nilpotent matrices such that the equalities $$A_1^{k_1}\cdots A_m^{k_m}=0$$ hold for all nonnegative integers $k_1,\dots, k_m$ such that $k_1+\dots +k_m=n+1$. We need to find a parabolic subgroup $P$ of ${\rm GL}_r$ with unipotent radical $U$, of which the nilpotency class smaller than $n+1$. Indeed, this was proved in \cite[P.165-166]{Kos93} and we include the proof here for completeness. Consider each $A_i$ as an endmorphism of $V=k^r$, and define subspaces
$$V_i:=\mathop{\bigcap}\limits_{k_1+\dots +k_m=i}\ker (A_1^{k_1}\cdots A_m^{k_m})$$
for $0\leq i\leq n+1$. Then we obtain a flag of $V$:
$$0=V_0\subset V_1\subset\cdots\subset V_{n+1}=V.$$
We can choose a basis of $V$ as
$$\{\alpha_{11},\dots, \alpha_{1d_1};\ \alpha_{21},\dots, \alpha_{2d_2};\ \cdots ;\ \alpha_{n+1,1},\dots, \alpha_{n+1,d_{n+1}} \}$$
such that
$$\{\alpha_{11},\dots, \alpha_{1d_1};\ \alpha_{21},\dots, \alpha_{2d_2};\ \cdots ;\ \alpha_{i,1},\dots, \alpha_{i,d_i} \}$$ is a basis of $V_i$ for each $i$. Under this basis, each matrix $A_i$ has the form
\begin{equation*}
    \begin{pmatrix}
    0&A_{12}&A_{13}&\cdots&A_{1,n+1}\\
    0& 0&A_{23}&\cdots&A_{2,n+1}\\
    \vdots& \vdots& \vdots&\dots&\vdots\\
    0&0&0&\cdots& 0\\
    \end{pmatrix},
\end{equation*}
Now we define the parabolic subgroup $P$ to be
\begin{equation*}
    \begin{pmatrix}
    A_{11}&A_{12}&A_{13}&\cdots&A_{1,n+1}\\
    0&A_{22}&A_{23}&\cdots&A_{2,n+1}\\
    \vdots& \vdots& \vdots&\dots&\vdots\\
    0&0&0&\cdots&A_{n+1,n+1}\\
    \end{pmatrix},
\end{equation*}
where $A_{ij}$ is a $d_i\times d_j$ matrix. Cleary, the unipotent radical $U$ of $P$ is
\begin{equation*}
    \begin{pmatrix}
    I_{d_1}&A_{12}&A_{13}&\cdots&A_{1,n+1}\\
    0& I_{d_2}&A_{23}&\cdots&A_{2,n+1}\\
    \vdots& \vdots& \vdots&\dots&\vdots\\
    0&0&0&\cdots& I_{d_{n+1}}\\
    \end{pmatrix},
\end{equation*}
of which the nilpotency class is smaller than $n+1$.
\end{proof}

Motivated by Proposition \ref{equivalent}, we introduce the definition of nilpotent $G$-Higgs bundles and nilpotent flat $G$-bundles.

\begin{defn}\label{defn_nil}
A $G$-Higgs bundle $(E,\theta)$ (resp. flat $G$-bundle $(V,\nabla)$) is said to be \emph{nilpotent of exponent $\leq n$} if there exists a covering of $X$ by open affine subsets $U_i$ such that for each $U_i$, the set
$$\{\theta|_{U_i} (\partial)\ |\ \partial\ \text{is a section of }T_{U_i} \} \quad (resp. \text{ } \{\psi|_{U_i} (\partial)\ |\ \partial\ \text{is a section of }F^*_{U'_i/k}T_{U'_i} \})$$ lies in $\mathfrak{u}(\mathcal{O}_{U_i})$, where $\mathfrak{u}$ is the Lie algebra of a unipotent radical of a parabolic subgroup $P$ of $G$ such that the nilpotency class of the unipotent radical is smaller than $n+1$.
\end{defn}

\section{Nilpotent $G$-Higgs Bundles and Nilpotent Flat $G$-Bundles}\label{sect_HIG_MIC_G}

and suppose that the characteristic $p$ is good.

Let $G$ be a connected split reductive linear algebraic group. We introduce the following categories:
\begin{itemize}
\item ${\rm HIG}_0(X/k,G)$: the category of $G$-bundles on $X$;

\item ${\rm MIC}_0(X/k,G)$: the category of flat $G$-bundles on $X$ with trivial $p$-curvature;

\item ${\rm HIG}_{p-1}(X/k,G)$: the category of nilpotent Higgs bundles on $X$ of exponent $\leq p-1$;

\item ${\rm MIC}_{p-1}(X/k,G)$: the category of nilpotent flat $G$-bundles on $X$ of exponent $\leq p-1$.
\end{itemize}
Let $\mathcal{C}$ be a category given above. We define $\mathcal{C}^{s(s)}$ to be the full subcategory of $\mathcal{C}$ of $R$-(semi)stable objects. For instance, ${\rm HIG}^{s(s)}_{p-1}(X/k,G)$ is the category of $R$-(semi)stable nilpotent $G$-Higgs bundles on $X$ of exponent $\leq p-1$.

The goal of this section is proving an equivalence of categories:
\begin{thm}\label{thm_HIG_MIC_G}
Suppose that $X$ is $W_2(k)$-liftable and the characteristic $p$ is good. The categories ${\rm HIG}_{p-1}(X'/k,G)$ and ${\rm MIC}_{p-1}(X/k,G)$ are equivalent.
\end{thm}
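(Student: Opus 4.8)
The plan is to reduce the principal-bundle statement to the vector-bundle case of Ogus--Vologodsky (equivalently, to the Lan--Sheng--Zuo exponential twisting construction), using the fact that a connected split reductive $G$ admits a faithful representation $G \hookrightarrow \mathrm{GL}_n$ and, crucially, that $p$ being good guarantees the existence and uniqueness of a Springer-type exponential isomorphism $\exp\colon \mathcal{N}_{\mathfrak g} \xrightarrow{\ \sim\ } \mathcal{U}_G$ between the nilpotent cone and the unipotent variety, compatible with the analogous maps on $\mathrm{GL}_n$ (Serre, Seitz, McNinch, Sobaje \cite{Sob15b}). The idea is: a nilpotent $G$-Higgs bundle $(E,\theta)$ of exponent $\leq p-1$ on $X'$ has $\theta$ landing, locally, in $\mathfrak u(\mathcal O_{U_i})$ for a unipotent radical with nilpotency class $< p$ (Definition \ref{defn_nil}); one performs the exponential twisting cocycle construction using a chosen $W_2(k)$-lift of $X$ and of Frobenius, exactly as in \cite{LSZ15}, but now with the $G$-valued exponential in place of the matrix exponential; the nilpotency bound $\leq p-1$ is precisely what makes the relevant exponential series terminate and stay polynomial, so the construction makes sense integrally and produces a flat $G$-bundle $(V,\nabla)$ whose $p$-curvature is again nilpotent of exponent $\leq p-1$.

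Concretely, I would proceed in the following steps. First, fix a $W_2(k)$-lifting $\widetilde X$ of $X$ together with local Frobenius lifts $\widetilde F_i$ on an affine cover, yielding the usual transition data. Second, recall the inverse Cartier transform $C^{-1}$ for vector bundles via exponential twisting as in \cite{LSZ15}: given $(E,\theta)\in\mathrm{HIG}_{p-1}$, the flat bundle is $F_{X/k}^*E$ with connection twisted by $\exp$ of (the Frobenius pullback of) $\theta$ against the difference of Frobenius lifts. Third, observe that all the cochain-level formulae are built from iterated brackets and the exponential, hence are $G$-equivariant for the adjoint action; since the Springer exponential $\exp_G$ intertwines with $\exp_{\mathrm{GL}_n}$ under any faithful representation, the construction preserves a reduction of structure group to $G$, i.e. it sends $G$-Higgs bundles to flat $G$-bundles. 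Here one uses that for $(E,\theta)$ nilpotent of exponent $\leq p-1$, the local Higgs fields take values in a nilpotent subalgebra $\mathfrak u \subset \mathfrak g$ on which $\exp_G$ is given by the truncated polynomial $\sum_{j<p}\tfrac1{j!}(\cdot)^j$, so everything is defined over $W_2$ and no denominators $\geq p$ appear. Fourth, construct the quasi-inverse (the Cartier transform) by the same recipe running backwards, again $G$-equivariantly, using $\log_G$; check the two composites are naturally isomorphic to the identity by citing the vector-bundle statement and the faithfulness of $G\hookrightarrow\mathrm{GL}_n$ together with the fact that a reduction of structure group, if it exists, is unique. Fifth, verify that both functors preserve nilpotency of exponent $\leq p-1$: on the Higgs side this is the hypothesis, and on the connection side the $p$-curvature of $C^{-1}(E,\theta)$ is computed, as in \cite{LSZ15}, to be conjugate (via the exponential) to $\theta$ itself, hence lands in the same $\mathfrak u$.

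The main obstacle I anticipate is verifying $G$-equivariance and well-definedness of the exponential twisting \emph{at the level of the group}, rather than just the Lie algebra or a matrix representation. One must check that: (i) $\exp_G$ restricted to the relevant nilpotent subalgebras is genuinely algebraic and polynomial of degree $<p$, so that the twisting cocycle is a morphism of $W_2$-schemes landing in $G$ (not merely in $\mathrm{GL}_n$); (ii) the cocycle condition for the glueing is satisfied in $G$, which requires the Baker--Campbell--Hausdorff-type identities to hold in the truncated setting — here the hypothesis that the nilpotency class is $<p$ and that $p$ is good is essential, and is exactly the content for which Sobaje's uniqueness theorem is invoked; (iii) the residual ambiguity in the choice of Frobenius lift is absorbed by a gauge transformation valued in $G$, not just $\mathrm{GL}_n$. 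Once these compatibilities are in place the equivalence follows formally from the $\mathrm{GL}_n$ case. I would also remark that the stability refinement (Theorem \ref{thm_HIG_MIC_G_stab}) is deferred and proved separately, since it requires tracking $\nabla$-compatible reductions through the construction.
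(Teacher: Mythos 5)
There is a genuine gap, and it sits exactly at the point you flag as "the main obstacle" but then dispose of too quickly. Your reduction to the vector-bundle case rests on the claim that the Springer/Sobaje exponential $\overline{\rm exp}_G$ intertwines with the truncated matrix exponential under a faithful representation $G\hookrightarrow{\rm GL}_n$ \emph{in characteristic $p$}. This is precisely what one cannot assume: exponential maps need not commute with homomorphisms of algebraic groups in characteristic $p$, and the paper's proof is organized around avoiding this assertion. What the paper actually does is construct the cocycles $G_{ij}=\overline{\rm exp}(h_{ij}(F_1^*\theta))$ and the twisted connections $\nabla_i=\nabla_{can,i}+\zeta_i(F_1^*\theta)$ intrinsically in $G$, and then verify the two gauge identities $G_{ij}^{-1}\,dG_{ij}=dh_{ij}(F_1^*\theta)$ and ${\rm Ad}(G_{ij})(\zeta_j(F_1^*\theta))=\zeta_j(F_1^*\theta)$ by a mod $p$ reduction: one lifts the affine chart, the Higgs field, and a faithful representation to $W_m(k)$ (where the characteristic-zero exponential does commute with $d\widetilde\rho$), uses the equality $p\,\widetilde{\mathfrak g}=(p\,\mathfrak{gl}(\widetilde V))\cap\widetilde{\mathfrak g}$ coming from injectivity of $d\widetilde\rho$, and reduces the computation to the elementary congruence ${\rm exp}(A)^{-1}d\,{\rm exp}(A)\equiv dA \bmod p$ in $\mathfrak{gl}(\widetilde V)$. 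Your step (ii) ("BCH-type identities ... Sobaje's uniqueness theorem is invoked") gestures at this but supplies no mechanism; Sobaje's theorem gives existence, uniqueness and $P$-equivariance of $\overline{\rm exp}$ on $\mathfrak u\to U$, not compatibility with an embedding into ${\rm GL}_n$ in characteristic $p$, so the missing ingredient is the lift-and-reduce argument (or an equivalent one).

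A second, related problem is your step four. You propose to get the quasi-inverse and the identification of the composites "formally from the ${\rm GL}_n$ case", using faithfulness of $G\hookrightarrow{\rm GL}_n$ and "the fact that a reduction of structure group, if it exists, is unique." Reductions of structure group of a ${\rm GL}_n$-bundle to $G$ are not unique in general, so this does not give a functorial $G$-structure on the Ogus--Vologodsky/Lan--Sheng--Zuo output, and it would in any case again require the characteristic-$p$ intertwining discussed above. The paper instead constructs the Cartier functor $C_{\rm exp}$ directly at the level of $G$: starting from $(V,\nabla)$ with nilpotent $p$-curvature $\psi$, it twists by $\zeta_i(\psi)$, glues by $J_{ij}=\overline{\rm exp}(h_{ij}(\psi))$, proves the new connection has trivial $p$-curvature (via the $F_1^*(S^{<p}T_{U'_i})$-module argument), and then applies Cartier descent for $G$-torsors (Katz, extended by Chen--Zhu) to descend to $X'$; the two functors are inverse because the gluing and twisting data are manifestly inverse to each other. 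Your outline of the exponential-twisting mechanism itself is the right one, but without the mod-$p$-reduction verification of the group-level identities and with the incorrect "formal reduction to ${\rm GL}_n$" step, the argument as proposed does not close.
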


The $W_2(k)$-liftable condition and the goodness of characteristics will be explained later. To prove the theorem, we construct inverse Cartier transform (as a functor) in \S\ref{subsect_inv_car_G}
\begin{align*}
    C^{-1}_{\rm exp}: {\rm HIG}_{p-1}(X'/k,G) \rightarrow {\rm MIC}_{p-1}(X/k,G),
\end{align*}
and Cartier transform (as a functor) in \S\ref{subsect_car_G}
\begin{align*}
    C_{\rm exp}: {\rm MIC}_{p-1}(X/k,G) \rightarrow {\rm HIG}_{p-1}(X'/k,G)
\end{align*}
via an approach called \emph{exponential twisting} introduced in \cite{LSZ15}. These two functors are inverse to each other, and then induces an equivalence of categories. Furthermore, the equivalence holds under stability conditions (Theorem \ref{thm_HIG_MIC_G_stab}).

Before we move to the construction of inverse Cartier and Cartier transform, we first review three classical results: \emph{classical Cartier descent}, \emph{Deligne-Illusie's lemma} and \emph{exponentiation}, which will be used in the proof.

\hspace{\fill}

\subsubsection*{\textbf{Classical Cartier descent}}
The classical Cartier descent is given as follows:	
\begin{thm}[Theorem 5.1 in \cite{Kat70}]\label{thm_Cartier_des_Katz}
There is an equivalence of categories between ${\rm HIG}_0(X'/k)$ and ${\rm MIC}_0(X/k)$.
\end{thm}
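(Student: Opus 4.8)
The plan is to realise the equivalence by two explicit, mutually quasi-inverse functors, and then reduce the only nonformal point to the local structure theory of connections with vanishing $p$-curvature. A Higgs bundle with zero Higgs field is just a vector bundle, so ${\rm HIG}_0(X'/k)$ is the category of locally free $\mathcal{O}_{X'}$-modules, while ${\rm MIC}_0(X/k)$ is the category of pairs $(V,\nabla)$ with $\nabla$ an integrable connection whose $p$-curvature vanishes (Definition \ref{defn_lambda_connection} with $\lambda=1$). One functor sends $E'$ to $F_{X/k}^{*}E'$ equipped with the canonical connection $\nabla_{\mathrm{can}}=\mathrm{id}\otimes d$ coming from $F_{X/k}^{*}E'=\mathcal{O}_X\otimes_{F_{X/k}^{-1}\mathcal{O}_{X'}}F_{X/k}^{-1}E'$; this lands in ${\rm MIC}_0(X/k)$ because $\psi_{\nabla_{\mathrm{can}}}$ reduces to the rank-one case $(\mathcal{O}_X,d)$, where $\psi_{\nabla_{\mathrm{can}}}(\partial)=\partial^{p}-\partial^{[p]}=0$ by the very definition of the restricted $p$-th power on derivations. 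The other functor sends $(V,\nabla)$ to the subsheaf of horizontal sections $V^{\nabla}=\ker\nabla$, viewed as an $\mathcal{O}_{X'}$-module through the isomorphism $\mathcal{O}_{X'}\xrightarrow{\ \sim\ }\ker\bigl(d\colon (F_{X/k})_{*}\mathcal{O}_X\to (F_{X/k})_{*}\Omega_{X/k}\bigr)$ provided by the relative Frobenius, which identifies $\mathcal{O}_{X'}$ with the subsheaf of $p$-th powers in $\mathcal{O}_X$.

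The heart of the argument is local. Choose an affine open $U=\Spec A$ carrying étale coordinates $t_1,\dots,t_n$, so that $A$ is free over $A':=\mathcal{O}_{X'}(U')$ (through $F_{X/k}$) with basis $\{\underline{t}^{\underline a}:0\le a_i\le p-1\}$ and the coordinate vector fields satisfy $\partial_i^{[p]}=0$. For $(V,\nabla)\in{\rm MIC}_0(X/k)$ put $D_i=\nabla(\partial_i)$; integrability gives $[D_i,D_j]=0$ and vanishing $p$-curvature gives $D_i^{p}=0$. Then
\[ e\;=\;\prod_{i=1}^{n}\Bigl(\sum_{j=0}^{p-1}\frac{(-1)^{j}\,t_i^{\,j}}{j!}\,D_i^{\,j}\Bigr) \]
is well defined, since $j!$ is a unit for $j<p$ and $D_i^{\,j}=0$ for $j\ge p$. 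A direct manipulation — using that the $n$ factors commute and that $D_i\bigl(\sum_j \tfrac{(-1)^j}{j!}t_i^{\,j}D_i^{\,j}\bigr)=0$ by a telescoping cancellation that consumes the term $D_i^{p}=0$ — shows $D_k\circ e=0$ for all $k$, $e|_{V^{\nabla}}=\mathrm{id}$, and $e$ is $A'$-linear. Hence $e$ is an $A'$-linear projector of $V$ onto $V^{\nabla}$, so $V^{\nabla}$ is a locally free $\mathcal{O}_{X'}$-module of the same rank and the multiplication map $A\otimes_{A'}V^{\nabla}\to V$ is an isomorphism.

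Globalising, this last isomorphism says the counit $F_{X/k}^{*}(V^{\nabla})\xrightarrow{\ \sim\ }V$ is an isomorphism of flat bundles, the canonical connection on the left matching $\nabla$ on the right since they agree on horizontal generators. For the unit, the horizontal sections of $(F_{X/k}^{*}E',\nabla_{\mathrm{can}})$ are locally $\ker(d)\otimes_{\mathcal{O}_{X'}}E'=\mathcal{O}_{X'}\otimes_{\mathcal{O}_{X'}}E'$, which gives a natural isomorphism $E'\xrightarrow{\ \sim\ }(F_{X/k}^{*}E')^{\nabla_{\mathrm{can}}}$. Therefore the two functors are mutually quasi-inverse and yield the asserted equivalence of categories.

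The only genuinely nonformal ingredient is the local claim of the second paragraph — that $e$ is a well-behaved projector, equivalently Cartier's theorem that a zero-$p$-curvature integrable connection is Zariski-locally isomorphic to $(\mathcal{O}_U^{\oplus r},d)$; everything else is bookkeeping with the Frobenius diagram. This is precisely \cite[Theorem 5.1]{Kat70}, and in the generalisations developed later in the paper its role is taken over by the corresponding $\lambda$-connection / Azumaya-algebra statement.
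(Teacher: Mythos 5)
Your proposal is correct and matches the route the paper itself takes: the paper offers no proof of its own here, it simply cites Katz's Theorem 5.1 and notes that the functor is $E\mapsto (F_{X/k}^{*}E,\nabla_{can})$, and your argument is precisely Katz's classical one (canonical connection one way, horizontal sections the other, with the local projector built from the $D_i$ and the vanishing $D_i^{p}=0$). The only compressed step is the passage from the projector $e$ to the surjectivity of $A\otimes_{A'}V^{\nabla}\to V$, which needs the companion truncated Taylor identity $v=\sum_{\underline a}\tfrac{t^{\underline a}}{\underline a!}\,e(D^{\underline a}v)$ — a telescoping computation of the same kind, and part of the Katz argument you explicitly invoke at the end.
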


\noindent Given $(E,0) \in {\rm HIG}_0(X'/k)$, Katz constructed a canonical connection $\nabla_{can}$ on $F^*_{X/k}E$ and prove that the induced functor ${\rm HIG}_0(X'/k) \rightarrow {\rm MIC}_0(X/k)$ gives an equivalence of categories. Chen--Zhu extended this result to principal bundles:
\begin{cor}[Example A.5 in \cite{CZ15}]\label{cor_cartier_des_G}
Let $\mathcal{G}'$ be a smooth affine group scheme over $X'$ and define $\mathcal{G}:=F^*_1 \mathcal{G}'$. Then we have an equivalence between the category of $\mathcal{G}'$-torsors on $X'$ and the category of $\mathcal{G}$-local systems with trivial $p$-curvature. As a special case, when $\mathcal{G}' = X' \times G$, the categories ${\rm HIG}_0(X'/k, G)$ and ${\rm MIC}_0(X/k,G)$ are equivalent.
\end{cor}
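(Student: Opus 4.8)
The plan is to promote the classical Cartier descent (Theorem \ref{thm_Cartier_des_Katz}, in its quasi-coherent form) to the Hopf-algebra sheaves that represent torsors, following the description of $G$-connections recalled in Remark \ref{rem_Higgs_conn}. Write $F := F_{X/k}$ for the relative Frobenius, so that $\mathcal{G} = F^*\mathcal{G}'$. Since $\mathcal{G}'$ is affine over $X'$ and $\mathcal{G}$ is affine over $X$, every $\mathcal{G}'$-torsor (respectively $\mathcal{G}$-torsor) is affine over the base by faithfully flat descent, hence is encoded by its structure sheaf: a faithfully flat quasi-coherent algebra $\mathcal{O}_{\mathcal{T}}$ carrying a coaction $a\colon \mathcal{O}_{\mathcal{T}} \to \mathcal{O}_{\mathcal{T}} \otimes_{\mathcal{O}_X} \mathcal{O}_{\mathcal{G}}$ for which the map $\mathcal{O}_{\mathcal{T}} \otimes_{\mathcal{O}_X} \mathcal{O}_{\mathcal{T}} \to \mathcal{O}_{\mathcal{T}} \otimes_{\mathcal{O}_X} \mathcal{O}_{\mathcal{G}}$ built from $a$ and the multiplication is an isomorphism. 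Note that $\mathcal{O}_{\mathcal{G}} = F^*\mathcal{O}_{\mathcal{G}'}$ carries a canonical connection $\nabla_{\mathcal{G}}$ (the connection $\nabla_G$ of Remark \ref{rem_Higgs_conn}) with vanishing $p$-curvature and with horizontal sections $\mathcal{O}_{\mathcal{G}'}$.

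First I would define the functor from $\mathcal{G}'$-torsors on $X'$ to $\mathcal{G}$-local systems with trivial $p$-curvature: send a $\mathcal{G}'$-torsor $\mathcal{T}'$ to the $\mathcal{G}$-torsor $\mathcal{T} := F^*\mathcal{T}'$ on $X$, with $\mathcal{O}_{\mathcal{T}} = F^*\mathcal{O}_{\mathcal{T}'}$ equipped with its canonical connection $\nabla_{can}$. One checks that $\nabla_{can}$ is a ring derivation, hence a $G$-connection in the sense of Remark \ref{rem_Higgs_conn}; that $a = F^*(a')$ is horizontal for $\nabla_{can}$ on the source and $\nabla_{can} \otimes 1 + 1 \otimes \nabla_{\mathcal{G}}$ on the target, being the Frobenius pullback of a morphism over $X'$; and that the $p$-curvature vanishes. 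In the other direction, given a $\mathcal{G}$-local system $(\mathcal{T},\nabla)$ with trivial $p$-curvature, apply Cartier descent to the quasi-coherent $\mathcal{O}_X$-module $\mathcal{O}_{\mathcal{T}}$: its horizontal subsheaf $\mathcal{O}_{\mathcal{T}}^\nabla$ is quasi-coherent over $\mathcal{O}_{X'}$ and $F^*\mathcal{O}_{\mathcal{T}}^\nabla \xrightarrow{\sim} \mathcal{O}_{\mathcal{T}}$. Since $\nabla$ is a derivation, $\mathcal{O}_{\mathcal{T}}^\nabla$ is an $\mathcal{O}_{X'}$-subalgebra; since $a$ is horizontal and the Cartier-descent equivalence is monoidal, namely $(\mathcal{M} \otimes_{\mathcal{O}_X} \mathcal{N})^\nabla = \mathcal{M}^\nabla \otimes_{\mathcal{O}_{X'}} \mathcal{N}^\nabla$ for flat modules of vanishing $p$-curvature, the coaction $a$ restricts to a $\mathcal{G}'$-coaction on $\mathcal{O}_{\mathcal{T}}^\nabla$; and $\mathcal{O}_{\mathcal{T}}^\nabla$ is faithfully flat over $\mathcal{O}_{X'}$ (faithful flatness descends along $F$, whose pullback of $\mathcal{O}_{\mathcal{T}}^\nabla$ is the faithfully flat $\mathcal{O}_{\mathcal{T}}$), with the torsor-condition isomorphism descending by the same monoidality. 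Hence $\mathcal{T}' := \underline{\Spec}_{X'}\mathcal{O}_{\mathcal{T}}^\nabla$ is a $\mathcal{G}'$-torsor on $X'$.

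These two functors are quasi-inverse by the object-level statement of Cartier descent applied to $\mathcal{O}_{\mathcal{T}}$, and they are fully faithful because a morphism of torsors-with-connection (respectively of torsors on $X'$) is exactly an equivariant horizontal (respectively equivariant) isomorphism of structure algebras, and Cartier descent matches these two sets. The special case is then immediate: for $\mathcal{G}' = X' \times G$ a $\mathcal{G}'$-torsor on $X'$ is a $G$-bundle on $X'$, that is, an object of ${\rm HIG}_0(X'/k,G)$, while $\mathcal{G} = F^*(X' \times G) = X \times G$, so a $\mathcal{G}$-local system with trivial $p$-curvature is precisely a flat $G$-bundle on $X$ with trivial $p$-curvature, that is, an object of ${\rm MIC}_0(X/k,G)$.

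I expect the main obstacle to be the bookkeeping required to descend the torsor structure, rather than any single deep point: one needs that on the category of quasi-coherent modules with integrable connection of vanishing $p$-curvature the horizontal-sections functor is exact and monoidal, so that it carries the algebra and coalgebra data and the torsor-condition isomorphism to the corresponding data over $X'$, and that faithful flatness descends along $F$. Everything else — compatibility of $\nabla_{can}$ with multiplication and coaction, and vanishing of $p$-curvature after $F^*$ — is the classical input recalled before the statement.
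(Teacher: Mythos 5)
Your proposal is correct, and since the paper itself gives no argument for this corollary — it is quoted directly from \cite[Example A.5]{CZ15} — your write-up supplies exactly the expected proof: Frobenius pullback with the canonical connection in one direction, horizontal sections in the other, with the algebra structure, the coaction and the torsor-condition isomorphism transported by the exactness and monoidality of classical Cartier descent (Theorem \ref{thm_Cartier_des_Katz}), and faithful flatness of $\mathcal{O}_{\mathcal{T}}^\nabla$ recovered by descent along the finite faithfully flat relative Frobenius. I see no gap beyond the routine verifications you already flag, so this matches the route underlying the cited result.
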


\hspace{\fill}

\subsubsection*{\textbf{A lemma by Deligne-Illusie}}
Let $W_2(k)$ be the length two Witt vector ring. We fix a $W_2(k)$-lifting $X_2$ of $X$ and choose an affine open cover $\{U_{2,i}\}_{i \in I}$ of $X_2$, where $I$ is the index set. Let $F_{2,i}: U_{2,i} \rightarrow U'_{2,i}$ be a Frobenius lifting, which ${\rm mod}\, p$ is the relative Frobenius morphism $F_{1,i} : U_{i} \rightarrow U'_{i}$, and we use the notation $F_1:=F_{1,i}$ for simplicity. We have a natural morphism
\begin{align*}
    \zeta_i := \frac{F^*_{2,i}}{[p]}: F^*_{1} \Omega_{U'_{i}} \rightarrow \Omega_{U_{i}}.
\end{align*}
Given $i,j,k \in I$, define $U_{ij}:=U_{i} \cap U_{j}$ and $U_{ijk}:=U_{i} \cap U_{j} \cap U_{k}$ to be the intersections.

\begin{lem}[\S 2 in \cite{DI87} or Lemma 2.1 in \cite{LSZ15}]\label{lem_DI}
There exist homomorphisms $h_{ij}: F_1^* \Omega_{U'_{ij}} \rightarrow \mathcal{O}_{U_{ij}}$ such that
\begin{enumerate}
\item $\zeta_{i}-\zeta_{j} = d h_{ij}$ over $U_{ij}$,
\item $h_{ij} + h_{jk} = h_{ik}$ over $U_{ijk}$.
\end{enumerate}
\end{lem}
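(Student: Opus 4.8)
The plan is to extract $h_{ij}$ directly from the discrepancy between the two Frobenius liftings $F_{2,i}$ and $F_{2,j}$ over the overlap $U_{2,ij}$, as in the original argument of Deligne--Illusie. Working affine-locally, write $F_{2,i}^\sharp, F_{2,j}^\sharp \colon \mathcal{O}_{U'_{2,ij}} \to \mathcal{O}_{U_{2,ij}}$ for the comorphisms; both reduce modulo $p$ to the comorphism of the relative Frobenius $F_1$, so their difference $\delta_{ij} := F_{2,i}^\sharp - F_{2,j}^\sharp$ has image in $p\,\mathcal{O}_{U_{2,ij}}$. Since $\mathcal{O}_{U_{2,ij}}$ is flat over $W_2(k)$ and $p^2 = 0$, multiplication by $p$ identifies $\mathcal{O}_{U_{ij}}$ with $p\,\mathcal{O}_{U_{2,ij}}$, and $\delta_{ij}$ kills $p\,\mathcal{O}_{U'_{2,ij}}$ (because $F_{2,\bullet}^\sharp$ is $W_2(k)$-linear); hence $\tfrac1p \delta_{ij}$ descends to a well-defined additive map $D_{ij} \colon \mathcal{O}_{U'_{ij}} \to \mathcal{O}_{U_{ij}}$.

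The first key point is that $D_{ij}$ is a derivation twisted along the relative Frobenius. Expanding $\delta_{ij}(a'b')$ with the Leibniz rule for the two ring homomorphisms and inserting the cross term $F_{2,i}^\sharp(a')F_{2,j}^\sharp(b')$ gives $\delta_{ij}(a'b') = F_{2,i}^\sharp(a')\,\delta_{ij}(b') + F_{2,j}^\sharp(b')\,\delta_{ij}(a')$; since $\delta_{ij}$ already lands in $p\,\mathcal{O}_{U_{2,ij}}$ and $F_{2,\bullet}^\sharp \equiv F_1^\sharp \pmod p$, dividing by $p$ yields $D_{ij}(a'b') = F_1^\sharp(a')\,D_{ij}(b') + F_1^\sharp(b')\,D_{ij}(a')$, and $D_{ij}$ vanishes on $k$ because $W_2(k)$-algebra maps agree on scalars. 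Thus $D_{ij}$ is a $k$-derivation of $\mathcal{O}_{U'_{ij}}$ valued in $\mathcal{O}_{U_{ij}}$, the latter regarded as an $\mathcal{O}_{U'_{ij}}$-module through $F_1^\sharp$, and therefore factors as $D_{ij} = h_{ij} \circ d$ for a unique $\mathcal{O}_{U_{ij}}$-linear map $h_{ij} \colon F_1^* \Omega_{U'_{ij}} \to \mathcal{O}_{U_{ij}}$.

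It remains to verify the two asserted identities. For the cocycle relation, on $U_{ijk}$ one has $\delta_{ij} + \delta_{jk} = \delta_{ik}$ by telescoping, hence $D_{ij} + D_{jk} = D_{ik}$, and uniqueness of the factorization through $d$ forces $h_{ij} + h_{jk} = h_{ik}$. For the relation with the $\zeta$'s I would check it on exact forms $da'$, which generate $\Omega_{U'_{ij}}$ over $\mathcal{O}_{U'_{ij}}$: lifting $a'$ to $a'_2$, one computes $(\zeta_i - \zeta_j)(da') = \tfrac1p\, d\big(F_{2,i}^\sharp(a'_2) - F_{2,j}^\sharp(a'_2)\big) \bmod p = d\big(D_{ij}(a')\big) = d\big(h_{ij}(1\otimes da')\big)$. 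Both $\zeta_i - \zeta_j$ and $d \circ h_{ij}$ are $\mathcal{O}_{U'_{ij}}$-linear when viewed as maps $\Omega_{U'_{ij}} \to F_{1*}\Omega_{U_{ij}}$ — here one uses the characteristic-$p$ fact that the exterior derivative $F_{1*}\mathcal{O}_{U_{ij}} \to F_{1*}\Omega_{U_{ij}}$ is $\mathcal{O}_{U'_{ij}}$-linear, since $d \circ F_1^\sharp = 0$ — so agreement on the $da'$ propagates to all of $F_1^*\Omega_{U'_{ij}}$, giving $\zeta_i - \zeta_j = dh_{ij}$.

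The computations are elementary; the only point that needs genuine care — and where a careless version of the argument breaks — is keeping the relative-Frobenius twist straight throughout: that ``divide by $p$'' is well-posed via flatness over $W_2(k)$, that the target $\mathcal{O}_{U_{ij}}$ must carry its Frobenius-twisted $\mathcal{O}_{U'_{ij}}$-module structure for $D_{ij}$ to factor through $\Omega_{U'_{ij}}$, and that the identity $\zeta_i - \zeta_j = dh_{ij}$ is to be read with $d$ the $\mathcal{O}_{U'_{ij}}$-linear de Rham differential on $F_{1*}$-modules, not the non-linear differential on $\mathcal{O}_{U_{ij}}$. I do not expect any essential obstacle beyond this bookkeeping, since the existence of the local Frobenius liftings $F_{2,i}$ is already part of the setup.
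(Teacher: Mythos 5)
Your proof is correct and complete: it is exactly the classical Deligne--Illusie argument (difference of the two Frobenius liftings divided by $p$ gives a Frobenius-twisted derivation $D_{ij}$, which factors through $d$ to produce $h_{ij}$, with the cocycle relation by telescoping and $\zeta_i-\zeta_j=dh_{ij}$ checked on exact forms using twisted linearity). The paper itself gives no proof and simply cites \cite{DI87} and \cite{LSZ15}, and your argument is precisely the one in those references, with the relevant bookkeeping (flatness over $W_2(k)$ for the division by $p$, and $d\circ F_1^{\sharp}=0$ for the linearity step) handled correctly.
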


\hspace{\fill}

\subsubsection*{\textbf{Exponentiation}}

Sobaje gave the definition of exponential maps for commuting nilpotent varieties \cite[Definition 2.3]{Sob15b} and proved the uniqueness of the exponential maps when the characteristic $p$ is good. We refer the reader to \cite[\S 2.9]{Her13} for the definition of good characteristics.

\begin{thm}[Theorem 3.4 in \cite{Sob15b}]\label{thm_sob3.4}
Suppose that the characteristic $p$ is good. Let $\mathcal{N}_1(\mathfrak{g})$ be the $[p]$-nilpotent variety of $\mathfrak{g}$ and let $\mathcal{U}_1(G)$ be the $p$-unipotent variety of $G$. Then there is a unique $G$-equivariant bijection
\begin{align*}
    \overline{\rm exp} : \mathcal{N}_1(\mathfrak{g}) \xrightarrow{\cong} \mathcal{U}_1(G),
\end{align*}
which is an exponential map, satisfying the property that if $U$ is the unipotent radical of a parabolic subgroup $P \subseteq G$ such that $U$ has nilpotency class less than $p$, then $\overline{\rm exp}$ restricts to a $P$-equivariant isomorphism $\mathfrak{u} \xrightarrow{\cong} U$ of algebraic groups, where $\mathfrak{u}$ is the Lie algebra of $U$, having tangent maps equal to the identity on $\mathfrak{u}$.
\end{thm}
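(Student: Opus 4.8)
The plan is to construct $\overline{\exp}$ by gluing together truncated exponential maps defined on the Lie algebras of unipotent radicals of suitably chosen parabolics, and to extract uniqueness from the rigidity forced by compatibility with those restrictions. The essential use of the hypothesis that $p$ is good is twofold: it makes available the structure theory of nilpotent elements in that range — in particular the existence of cocharacters associated to a nilpotent element in the sense of Jacobson--Morozov--Pommerening--Premet — and it guarantees that the relevant unipotent radicals have nilpotency class $<p$, so that the truncated series $x \mapsto \sum_{0 \le i < p} x^i/i!$ genuinely defines a homomorphism of algebraic groups (the Baker--Campbell--Hausdorff formula terminates in degree $<p$, so all denominators occurring are prime to $p$).

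For existence, I would first fix, for each $e \in \mathcal{N}_1(\mathfrak{g})$, a cocharacter $\lambda \colon \mathbb{G}_m \to G$ associated to $e$, so that $e \in \mathfrak{g}(\lambda;2)$; then $e$ lies in $\mathfrak{u} := \mathrm{Lie}(U)$, where $U$ is the unipotent radical of the parabolic $P = P(\lambda)$, and using goodness of $p$ together with $e^{[p]}=0$ one arranges $P$ so that $U$ has nilpotency class $<p$. On such a $\mathfrak{u}$ the truncated exponential is a $P$-equivariant isomorphism of algebraic groups $\mathfrak{u} \xrightarrow{\sim} U$ with tangent map the identity, and I would set $\overline{\exp}(e)$ equal to its value at $e$. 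The substantive point is to show this is well defined and assembles into a morphism of varieties: independence of the choice of $\lambda$ follows because any two cocharacters associated to $e$ are conjugate under $C_G(e)^\circ$ and the truncated exponential is $C_G(e)$-equivariant; this yields a well-defined $G$-equivariant set map $\mathcal{N}_1(\mathfrak{g}) \to \mathcal{U}_1(G)$, and since $\mathcal{N}_1(\mathfrak{g})$ is $G$-saturated by finitely many such $\mathfrak{u}$'s on each of which $\overline{\exp}$ is a morphism, the glued map is a morphism. It is a bijection because it restricts to isomorphisms $\mathfrak{u} \xrightarrow{\sim} U$ which $G$-saturate $\mathcal{N}_1(\mathfrak{g})$ and $\mathcal{U}_1(G)$ respectively, and $G$-equivariance accounts for the orbit structure. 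The stated restriction property and the identification of tangent maps are built into the construction.

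For uniqueness, suppose $\overline{\exp}_1$ and $\overline{\exp}_2$ are two maps as in the statement. On any $\mathfrak{u} = \mathrm{Lie}(U)$ with $U$ the unipotent radical of a parabolic of nilpotency class $<p$, both restrict to $P$-equivariant isomorphisms of algebraic groups $\mathfrak{u} \to U$ with tangent map the identity; I would argue by induction along the lower central series of $\mathfrak{u}$ — pinning the map down on the abelian top quotient, where the tangent condition and $P$-equivariance leave no freedom, and then bootstrapping down — that such an isomorphism is unique, so $\overline{\exp}_1$ and $\overline{\exp}_2$ agree on $\mathfrak{u}$. Since these $\mathfrak{u}$'s $G$-saturate $\mathcal{N}_1(\mathfrak{g})$ and both maps are $G$-equivariant, $\overline{\exp}_1 = \overline{\exp}_2$.

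The step I expect to be the main obstacle is the structural one in the existence argument: verifying, in merely good (as opposed to very large) characteristic, that every $e \in \mathcal{N}_1(\mathfrak{g})$ actually lies in $\mathrm{Lie}(U)$ for a parabolic whose unipotent radical has nilpotency class $<p$, and controlling the interaction of the associated-cocharacter construction with the $[p]$-operation tightly enough that the truncated exponential lands in $\mathcal{U}_1(G)$ and patches to a genuine morphism of varieties rather than only a bijection on points. This is exactly where the finer results on nilpotent orbits in good characteristic are needed in place of the large-$p$ argument, and where the hypothesis on restriction to unipotent radicals does real work.
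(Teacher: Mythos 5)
This theorem is not proved in the paper at all: it is quoted from Sobaje \cite{Sob15b}, and the only related material here is Remark \ref{rem_mod_p_red}, which recalls that the restricted exponentials $\mathfrak{u}_\Gamma\xrightarrow{\cong}U_\Gamma$ of Serre and Seitz \cite{Ser94,Sei00} arise by base change from an exponential isomorphism over $\mathbb{Z}_{(p)}$. So your proposal must be judged against Sobaje's argument rather than anything in this paper, and measured that way it has a genuine gap exactly at the point you yourself flagged, but the problem is worse than ``unverified'': the specific route you propose fails. You take $U=R_u(P(\lambda))$ for $\lambda$ a cocharacter associated to $e$ and assert that goodness of $p$ together with $e^{[p]}=0$ lets one ``arrange'' that $U$ has nilpotency class $<p$. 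This is false for the associated parabolic: in $G=\mathrm{GL}_{2p-1}$ take $e$ of Jordan type $(p,p-1)$; then $e^{[p]}=0$, but the associated cocharacter has weights $-(p-1),\dots,p-1$ each with multiplicity one on the natural module, so $P(\lambda)$ is a Borel and its unipotent radical has nilpotency class $2p-2\geq p$. The statement you actually need --- every $e\in\mathcal{N}_1(\mathfrak{g})$ is $G$-conjugate into $\mathrm{Lie}(U)$ for \emph{some} parabolic whose unipotent radical has class $<p$ --- does not follow from the associated-cocharacter formalism; it is a substantial theorem (essentially Seitz's saturation results for unipotent elements of order $p$, which is precisely the input Sobaje relies on), and your argument does not supply it. In $\mathrm{GL}_n$ one can repair the example by hand using the kernel flag $\ker e\subset\ker e^2\subset\cdots$, but for general reductive $G$ in good characteristic this is the hard content.

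Two further steps are left open even granting that coverage. First, well-definedness: you only check independence of the choice of associated cocharacter (conjugacy under $C_G(e)^{\circ}$), but once you are forced to use parabolics other than $P(\lambda)$ you must show that two admissible pairs $(P_1,U_1)$, $(P_2,U_2)$ with $e\in\mathfrak{u}_1\cap\mathfrak{u}_2$ assign the same value to $e$; this compatibility of Seitz's exponentials under change of $U$ is not automatic and is not addressed. Second, the assertion that a $G$-equivariant map which is a morphism on finitely many $\mathfrak{u}$'s whose $G$-saturations cover $\mathcal{N}_1(\mathfrak{g})$ is itself a morphism needs an argument (descent along $G\times^{P}\mathfrak{u}\rightarrow\mathcal{N}_1(\mathfrak{g})$, or normality of $\mathcal{N}_1(\mathfrak{g})$), though for the bare ``bijection'' form quoted above this is less critical. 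Your uniqueness sketch (rigidity of $P$-equivariant isomorphisms $\mathfrak{u}\rightarrow U$ with identity differential --- where $T$-equivariance is what rules out Frobenius-twisted maps --- combined with $G$-equivariance) is plausible, but it, too, rests on the same unproved coverage statement, since uniqueness of a bijection requires agreement at every point of $\mathcal{N}_1(\mathfrak{g})$, not just on a dense orbit.
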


\begin{rem}\label{rem_mod_p_red}
When the nilpotency class of $U$ is less than $p$, the $P$-equivariant isomorphism $\mathfrak{u} \xrightarrow{\cong} U$ is first studied by Serre \cite[\S 4]{Ser94} and then proved by Seitz \cite[\S 5]{Sei00}, where Seitz proved the existence and uniqueness of the exponential map $\mathfrak{u} \rightarrow U$ via an argument of mod $p$ reduction. Since this approach is used in this section, we briefly review it as follows. We fix a maximal torus $T$ of $G$ which is contained in $P$, and let $\mathcal{R}^+$ be the set of positive roots.  Let $\Gamma \subset\mathcal{R}^+$ be a closed subset of positive roots and denote by $U_\Gamma$ the corresponding unipotent subgroup with Lie algebra $\mathfrak{u}_\Gamma$. Let $i : \Spec\, k\rightarrow \Spec \, \mathbb{Z}_{(p)}$ be the composite $\Spec \, k \rightarrow \Spec \, \mathbb{F}_p \rightarrow \Spec \, \mathbb{Z}_{(p)}$. Then the exponential isomorphism $\overline{\rm exp}:\mathfrak{u}_\Gamma \rightarrow U_\Gamma$ in characteristic $p$ comes from the base change of the exponential isomorphism $$\widetilde{\rm exp}:\widetilde{\mathfrak{u}}_\Gamma \rightarrow \widetilde{U}_\Gamma$$ in characteristic $0$, where $\widetilde{U}_\Gamma$ (resp. $\widetilde{\mathfrak{u}}_\Gamma$) is the unipotent subgroup (resp. nilpotent Lie algebra) over $\mathbb{Z}_{(p)}$ such that $U_\Gamma = \widetilde{U}_\Gamma \times_{\Spec \, \mathbb{Z}_{(p)}} \Spec \, k$. We refer the reader to \cite[\S 5]{Sei00} for more details. Moreover, we can identifies $U_\Gamma$ with the affine space with coordinates $(x_\alpha)_{\alpha\in \Gamma}$, then the exponential isomorphism can be expressed by the formula $$\overline{\rm exp}(D)=\bigg(\sum_{0\leq m\leq p-1}\dfrac{D^m}{m!}(x_\alpha)\ \text{at}\ e\bigg)_{\alpha\in \Gamma},$$ and we refer the reader to \cite[\S 6]{BDP17} for more details.
\end{rem}

\subsection{Inverse Cartier transform}\label{subsect_inv_car_G}
In this subsection, we will show that given a nilpotent $G$-Higgs bundle of exponent $\leq p-1$, it can be associated with a nilpotent flat $G$-bundle of exponent $\leq p-1$. Furthermore, this construction induces a functor
\begin{align*}
C^{-1}_{\rm exp}: {\rm HIG}_{p-1}(X'/k,G) \rightarrow {\rm MIC}_{p-1}(X/k,G).
\end{align*}
We fix some notations first. Let $\mathcal{U}=\{U_i\}_{i \in I}$ be a covering of open affine subsets of $X$ given in Lemma \ref{lem_DI}. Given a $G$-Higgs bundle $(E,\theta)$ on $X'$, denote by $(E_i,\theta_i)$ the restriction of $(E,\theta)$ to $U'_i$. We can define the above notations on the intersections $U_{ij}$ similarly. For convenience, we use the same notation $F_1$ for relative Frobenius morphisms of $X$, $U_i$ and etc. for simplicity.
	
Let $(E,\theta)$ be a $G$-Higgs bundle in ${\rm HIG}_{p-1}(X'/k,G)$. The element $h_{ij}(F_1^* \theta) := h_{ij}(F_1^* \theta_{ij})$ is a section of $(F_1^* E_{ij})(\mathfrak{g})$. Since $\theta$ is nilpotent of exponent $\leq p-1$, the set $\{\theta_{ij} (\partial)\ |\ \partial\ \text{is a section of }T_{U'_{ij}} \}$ lies in a Lie algebra $\mathfrak{u}(\mathcal{O}_{U'_{ij}})$, which is the Lie algebra of a unipotent subgroup $U$. Then by Theorem \ref{thm_sob3.4}, there exists an isomorphism $\overline{\rm exp} : \mathfrak{u} \xrightarrow{\cong} U$ given by the exponentiation. Therefore, the element
\begin{align*}
G_{ij} := \overline{\exp}(h_{ij}(F_1^* \theta_{ij}))
\end{align*}
is well-defined and induces an automorphism of $F^*_1 E_{ij}$. If there is no ambiguity, we use the notation
\begin{align*}
    h_{ij}(F_1^* \theta) := h_{ij}(F_1^* \theta_{ij})
\end{align*}
for simplicity. Since $\theta$ is integrable, we have
\begin{align*}
\overline{\exp} ( h_{ij} (F^*_1 \theta) ) \cdot \overline{\exp}  ( h_{jk} (F^*_1 \theta) ) = \overline{\exp} (( h_{ij} + h_{jk}) (F^*_1 \theta) ).
\end{align*}
By Lemma \ref{lem_DI}, the data $\{G_{ij}\}_{i,j \in I}$ satisfies the cocycle condition
\begin{equation}\label{eq_G}
G_{ij} \cdot G_{jk} = G_{ik}.
\end{equation}
Gluing $\{V_i:=F^*_1 E_i\}_{i \in I}$ via $G_{ij}$, we obtain a $G$-bundle $V$ on $X$.
	
Next, we construct a connection on $V$. Denote by $\nabla_{can,i}$ the canonical connection on $F^*_1 E_i$ given by Corollary \ref{cor_cartier_des_G}, and we define a new connection
\begin{equation}\label{eq_new_conn}
\nabla_i = \nabla_{can,i} + \zeta_i(F^*_1 \theta)
\end{equation}
on $F^*_1 E_i$. We have to show that the local data $\{\nabla_i\}_{i \in I}$ can be glued together via $\{G_{ij}\}_{i,j \in I}$, i.e.
\begin{equation}\label{eq_nabla}
G_{ij} \circ \nabla_j = \nabla_i,
\end{equation}
where the action is the \emph{gauge action}. It is equivalent to show
\begin{align*}
G_{ij}^{-1} \cdot dG_{ij} +  {\rm Ad}(G_{ij}) ( \zeta_j(F^*_1 \theta)) =  \zeta_i (F^*_1 \theta).
\end{align*}
We claim
\begin{align*}
    G^{-1}_{ij} \cdot dG_{ij} = dh_{ij} (F^*_1 \theta), \quad {\rm Ad}(G_{ij}) ( \zeta_j(F^*_1 \theta)) = \zeta_j(F^*_1 \theta),
\end{align*}
and then, the left-hand side of equation is
\begin{equation}\tag{$\ast$}\label{eq_char_p}
    G_{ij}^{-1} \cdot dG_{ij} +  {\rm Ad}(G_{ij}) ( \zeta_j(F^*_1 \theta)) = dh_{ij} (F^*_1 \theta) + \zeta_j(F^*_1 \theta).
\end{equation}
Therefore, the desired equality follows directly from Lemma \ref{lem_DI} and we obtain a $G$-connection $\nabla$ on $V$. The claim can be proved by a mod $p$ reduction argument and we only give the proof of the equality $$G^{-1}_{ij} \cdot dG_{ij} = dh_{ij} (F^*_1 \theta)$$ as an example. Let $\rho:G\hookrightarrow {\rm GL}(V)$ be a faithful finite dimensional representation given by the regular representation (see \cite[\S 8.6]{Hum75} or \cite[Theorem 4.9]{Mil17} for instance). Let $m$ be an integer such that $p^m>n$ and we take a $W_m(k)$-lifting $\widetilde{U}_{ij}$ of $U_{ij}$. Note that such a lifting always exists because $U_{ij}$ is affine. Moreover, there also exists a $W_m(k)$-lifting $\widetilde{\rho}:\widetilde{G}\hookrightarrow {\rm GL}(\widetilde{V})$ by the construction of $\rho$. Then we have the following equality for the corresponding Lie algebras
\begin{align*}
    p \widetilde{\mathfrak{g}} = (p \mathfrak{gl}(\widetilde{V}) ) \cap \widetilde{\mathfrak{g}}
\end{align*}
because the induced morphism $d \widetilde{\rho}: \widetilde{\mathfrak{g}} \rightarrow \mathfrak{gl}(\widetilde{V})$ of Lie algebras is injective. Let $\theta_{ij}$ be the restriction of the Higgs field $\theta$ to $U_{ij}$. We choose a $W_m(k)$-lifting $\widetilde{\theta}_{ij}$ of $\theta_{ij}$. The equality \eqref{eq_char_p} in characteristic $p$ is equivalent to the following equality:
\begin{align*}
    \widetilde{G}_{ij}^{-1} \cdot d\widetilde{G}_{ij} \equiv d\widetilde{h}_{ij} (\widetilde{F}^*_1 \widetilde{\theta}_{ij}) \text{ mod } \left( p (\widetilde{\mathfrak{g}} \otimes_{ \Spec \, W_m(k)} \Omega_{\widetilde{U}_{ij}} )(\mathcal{O}_{\widetilde{U}_{ij}}) \right),
\end{align*}
where $\widetilde{h}_{ij}$ (resp. $\widetilde{F}^*_1)$ is the lifting of $h_{ij}$ (resp. $F^*_1$) and $\widetilde{G}_{ij}=\widetilde{\rm exp}(   \widetilde{h}_{ij} (\widetilde{F}^*_1 \widetilde{\theta}_{ij})  )$.  By the facts that the exponential map commutes with $d\widetilde{\rho}$ and the equality $p \widetilde{\mathfrak{g}} = (p \mathfrak{gl}(\widetilde{V}) ) \cap \widetilde{\mathfrak{g}}$, it is equivalent to show the following equality in the case of ${\rm GL}(V)$
\begin{align*}
    {\rm exp}(A)^{-1} \cdot d {\rm exp}(A) \equiv dA \text{ mod } \left( p (\mathfrak{gl}(\widetilde{V}) \otimes_{ \Spec \, W_m(k)} \Omega_{\widetilde{U}_{ij}} )(\mathcal{O}_{\widetilde{U}_{ij}}) \right),
\end{align*}
where $A=d\widetilde{\rho}(\widetilde{h}_{ij} (\widetilde{F}^*_1 \widetilde{\theta}_{ij})) \in \mathfrak{gl}(\widetilde{V})(\mathcal{O}_{ \widetilde{U}_{ij}})$ and ${\rm exp}(A)=\sum\limits_{0\leq m\leq n-1}\dfrac{A^m}{m!}$. Then we have
\begin{align*}
    d A^m \equiv mA^{m-1}dA \text{ mod } \left( p (\mathfrak{gl}(\widetilde{V}) \otimes_{ \Spec \, W_m(k)} \Omega_{\widetilde{U}_{ij}} )(\mathcal{O}_{\widetilde{U}_{ij}}) \right).
\end{align*}
Therefore,
 \begin{align*}
     {\rm exp}(A)^{-1}\cdot d{\rm exp}(A) & \equiv {\rm exp}(-A)(\sum\limits_{0\leq m\leq n-2}\dfrac{A^m}{m!}dA) \equiv {\rm exp}(-A) {\rm exp}(A)dA \\
     & \equiv dA \text{ mod } \left( p (\mathfrak{gl}(\widetilde{V}) \otimes_{ \Spec \, W_m(k)} \Omega_{\widetilde{U}_{ij}} )(\mathcal{O}_{\widetilde{U}_{ij}}) \right).
\end{align*}
This finishes the proof of the equality \eqref{eq_char_p}. We  remark here that we can not show this equality directly by choose a faithful representation of $G$ in characteristic $p$ since the exponential map may not commute with morphisms of algebraic groups in characteristic $p$.
	
Finally, we will prove that the $G$-connection $\nabla$ is integrable and nilpotent of exponent $\leq p-1$. Since either integrability and nilpotency is a local property, we only have to check them on local charts $U_i$. Then the proof of the integrability is exactly the same as \cite[\S 2.2 Step 3]{LSZ15}. For the nilpotency, the nilpotency of the $p$-curvature of $\nabla_i$ is equivalent to that of $F^*_1 \theta_i$, and thus, $\nabla_i$ is also nilpotent of exponent $\leq p-1$.

In conclusion, the above construction gives a functor
\begin{align*}
C^{-1}_{\rm exp}: {\rm HIG}_{p-1}(X'/k,G) \rightarrow {\rm MIC}_{p-1}(X/k,G).
\end{align*}
	
\subsection{Cartier transform}\label{subsect_car_G}
We consider the other direction: given a flat $G$-bundle $(V,\nabla) \in {\rm MIC}_{p-1}(X/k,G)$, we want to construct a $G$-Higgs bundle $(E,\theta) \in {\rm HIG}_{p-1}(X'/k,G)$, which will induce a functor
\begin{align*}
C_{\rm exp}: {\rm MIC}_{p-1}(X/k,G) \rightarrow {\rm HIG}_{p-1}(X'/k,G).
\end{align*}

Given a pair $(V,\nabla) \in {\rm MIC}_{p-1}(X/k,G)$, denote by $\psi$ the $p$-curvature of $\nabla$. Note that $\psi$ is nilpotent of exponent $\leq p-1$ by definition. We use the same notation for coverings as in \S\ref{subsect_inv_car_G}, and let $V_i:= V|_{U_i}$, $\psi_i:= \psi|_{U_i}$. Then, we define a new connection $\nabla'_i$ on $V_i$ as
\begin{align*}
\nabla'_i:=\nabla_i + \zeta_i(\psi).
\end{align*}
We regard the new connection as $\nabla'_i: \mathcal{O}_{V_i} \rightarrow \mathcal{O}_{V_i} \otimes \Omega_{U_i}$ (see Remark \ref{rem_Higgs_conn}), and then the $p$-curvature of $\nabla'_i$ is trivial by following the same argument as in \cite[\S 2.3, Step 3, page 866-869]{LSZ15}. For completeness of the proof, we also include it here. Consider the algebra $$F_1^*(S^{<p}T_{U'_i}):=\bigoplus_{0\leq m<p} F_1^*(S^mT_{U'_i})=F_1^*(S\dot\ T_{U'_i})/I,$$  where $I$ is the ideal $F_1^*(S^{\geq p}T_{U'_i}):=\bigoplus\limits_{ m\geq p} F_1^*(S^mT_{U'_i})$. There exists a connection $\nabla_T$ on $F_1^*(S^{< p}T_{U'_i})$ with $p$-curvature $\psi_{\nabla_T}(\partial)(\tau)=\partial\tau$ for $\partial\in F_1^*T_{U'_i},\ \tau\in F_1^*(S^{<p}T_{U'_i})$ by \cite[\S 2.3,  page 868]{LSZ15}. We have an isomorphism of $\mathcal{O}_{U_i}$-modules:
$$\lambda:\mathscr{H}om_{F_1^*(S^{< p}T_{U'_i})}(F_1^*(S^{< p}T_{U'_i}),\mathcal{O}_{V_i})\xrightarrow{\cong} \mathcal{O}_{V_i},\quad \phi\mapsto \phi(1).$$ Let  $\Tilde{\nabla}$ be the connection on $\mathscr{H}om_{F_1^*(S^{< p}T_{U'_i})}(F_1^*(S^{< p}T_{U'_i}),\mathcal{O}_{V_i})$ given by the formula $$(\Tilde{\nabla}(\partial)(\phi))(\tau)+\phi(\nabla_T(\partial)(\tau))=\nabla_i(\partial)(\phi(\tau))$$ for $\partial\in F_1^*T_{U'_i},\phi\in \mathscr{H}om_{F_1^*(S^{< p}T_{U'_i})}(F_1^*(S^{< p}T_{U'_i}),\mathcal{O}_{V_i}), \tau\in F_1^*(S^{< p}T_{U'_i}),$ then the $p$-curvature $\psi_{\Tilde{\nabla}}$ of $\Tilde{\nabla}$ is tautologically zero since
$$(\psi_{\Tilde{\nabla}}(\partial)(\phi))(\tau)=\psi_{\nabla_i}(\partial)(\phi(\tau))-\phi(\psi_{\nabla_T}(\partial)(\tau))=0$$ hold for all $\partial\in F_1^*T_{U'_i}, \phi\in \mathscr{H}om_{F_1^*(S^{< p}T_{U'_i})}(F_1^*(S^{< p}T_{U'_i}),\mathcal{O}_{V_i}), \tau\in F_1^*(S^{< p}T_{U'_i}). $ This implies that $\nabla_i^\prime$ has trivial $p$-curvature since $\Tilde{\nabla}=\nabla_i^\prime$ via the above isomorphism $\lambda$.

Since the $p$-curvature $\psi_i$ is horizontal with respect to $\nabla_i$, it is also horizontal with respect to $\nabla'_i$. Now we want to glue $\{(V_i,\nabla'_i,\psi_i)\}_{i \in I}$ to obtain a new flat $G$-bundle $(V',\nabla')$, of which the $p$-curvature is trivial, together with a $F$-Higgs field $\psi$ horizontal with respect to $\nabla'$. We define
\begin{align*}
J_{ij}:= \overline{\exp} (h_{ij}(\psi)),
\end{align*}
which is an automorphism of $V_{ij}$. With the same argument as \eqref{eq_G}, we have
\begin{align*}
    J_{ij} \cdot J_{jk} = J_{ik}, \quad J_{ij} \circ \psi_j = \psi_i,
\end{align*}
where $J_{ij} \circ \psi_j$ is the adjoint action, and then we obtain a new $G$-bundle $V'$ and a new section $\psi'$ of $V'(\mathfrak{g}) \otimes F^*_1 \Omega_{X'}$, which is regarded as a $F$-Higgs field. A similar calculation of formula \eqref{eq_nabla} gives
\begin{align*}
    \quad J_{ij} \circ \nabla'_j = \nabla'_i,
\end{align*}
and we get an integrable $G$-connection $\nabla'$ on $V'$. In conclusion, we get a triple $(V',\nabla',\psi')$, where $(V',\nabla')$ is a flat $G$-bundle with trivial $p$-curvature and $\psi'$ is a $F$-Higgs field on $V'$. Since $\psi'$ is horizontal with respect to $\nabla'$, of which the $p$-curvature is zero, $(V',\nabla',\psi')$ descends to a $G$-Higgs bundle $(E,\theta)$ by Corollary \ref{cor_cartier_des_G}. Therefore, we obtain a functor
\begin{align*}
C_{\rm exp}: {\rm MIC}_{p-1}(X/k,G) \rightarrow {\rm HIG}_{p-1}(X'/k,G).
\end{align*}
as expected.

\subsection{Stability condition}\label{subsect_stab_cond}

Suppose that $X$ is smooth and projective. We fix an ample line bundle $H$ on $X$. Define $H':=\pi_{X/k}^* H$, which is also an ample line bundle on $X'$. Furthermore, $H^p \cong F_{X/k}^* H'\cong F_X^* H$. We remind the reader that the stability condition of $G$-Higgs bundles on $X'$ is given by the ample line bundle $H'$, while the stability condition of flat $G$-bundles on $X$ is given by $H$.

We first improve the equivalence given in Corollary \ref{cor_cartier_des_G} by adding the information of stability conditions:
\begin{prop}\label{prop_cart_des_G_stab}
The category ${\rm HIG}^{(s)s}_0(X'/k,G)$ is equivalent to ${\rm MIC}^{(s)s}_0(X/k,G)$.
\end{prop}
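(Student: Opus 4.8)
The statement to prove is that Cartier descent (Corollary \ref{cor_cartier_des_G}) restricts to an equivalence between $R$-(semi)stable objects on both sides, i.e.\ ${\rm HIG}^{(s)s}_0(X'/k,G) \simeq {\rm MIC}^{(s)s}_0(X/k,G)$. The underlying equivalence of categories is already in hand; what must be checked is that it matches up the stability conditions. Since stability is defined via reductions of structure group to maximal parabolics $P$ and the sign of $\deg L(\sigma,\chi)$ for antidominant characters $\chi$, the plan is: (1) set up a bijection between reductions of structure group on the Higgs side and on the flat side that is compatible with $0$-connections (resp.\ trivial $p$-curvature connections); (2) compare the line bundles $L(\sigma,\chi)$ and their degrees under this bijection; (3) conclude that the defining inequalities translate into each other.

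\textbf{Key steps.} First, I would use that the Cartier descent functor sends $(E,0) \in {\rm HIG}_0(X'/k,G)$ to $(F^*_{X/k}E, \nabla_{can})$, and that all structure is functorial in $G$. A reduction of structure group $\sigma: X^{\rm big} \to E/P$ of $E$ pulls back, via $F_{X/k}$, to a reduction $F^*_{X/k}\sigma$ of $F^*_{X/k}E$; because $\nabla_{can}$ is the canonical connection arising from Cartier descent, such a pulled-back reduction is automatically $\nabla_{can}$-compatible (the associated $P$-bundle again carries a canonical connection by the same Cartier descent applied to $P$, using that $P$ is a smooth affine group scheme). Conversely, a $\nabla_{can}$-compatible reduction of $F^*_{X/k}E$ descends, by Cartier descent for the $P$-torsor, to a reduction of $E$ on $X'^{\rm big}$ (one checks $X'^{\rm big}$ is again big, and that reductions over big opens extend — or simply works on the big open throughout). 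This gives the desired bijection of reductions. Second, for a character $\chi: P \to \mathbb{G}_m$, the line bundle $L(F^*_{X/k}\sigma,\chi)$ associated to the pulled-back reduction is canonically $F^*_{X/k}$ of $L(\sigma,\chi)$ on $X'^{\rm big}$ (pullback commutes with associated-bundle constructions). Third, I invoke the degree computation: for a line bundle $L$ on $X'$, $\deg_H(F^*_{X/k}L) = p \cdot \deg_{H'}(L)$, using $H^p \cong F^*_{X/k}H'$ noted just above the proposition and the projection formula; hence $\deg_H L(F^*_{X/k}\sigma,\chi) = p\cdot\deg_{H'}L(\sigma,\chi)$, so the sign is preserved. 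Therefore $E$ is $R$-(semi)stable iff $(F^*_{X/k}E,\nabla_{can})$ is, and the restricted functor is an equivalence of the stable subcategories. The same argument run backwards through the inverse functor handles the other direction.

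\textbf{Main obstacle.} The delicate point is the bijection between reductions over big open subsets and the compatibility of $\nabla_{can}$-compatibility under pullback/descent — one must be careful that Cartier descent for a $P$-torsor with its canonical connection really does invert the $F^*_{X/k}$ operation on reductions (this is where Corollary \ref{cor_cartier_des_G} applied with $\mathcal{G}' = P$, or rather a twisted form $E/P$-fibration, is used), and that passing to a big open set does not lose or gain reductions (reflexive-sheaf/Hartogs-type extension, plus the fact that $X \setminus X^{\rm big}$ has codimension $\geq 2$ and $F_{X/k}$ is finite so preimages of big opens are big). The degree comparison and the associated-bundle functoriality are routine once this bijection is established.
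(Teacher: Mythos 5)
Your proposal is correct and follows essentially the same route as the paper: both arguments establish a one-to-one correspondence between reductions of structure group $\sigma$ on the Higgs side and $\nabla_{can}$-compatible reductions $\varsigma$ on the flat side via Frobenius pullback and classical Cartier descent applied to the $P$-bundle $E_\sigma$, and then conclude from the degree relation $\deg_H L(\varsigma,\chi) = p\,\deg_{H'} L(\sigma,\chi)$. Your additional remarks on big open subsets and the descent direction only spell out points the paper treats as routine.
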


\begin{proof}
Let $E$ be a $G$-bundle on $X'$, and denote by $(V,\nabla_{can})$ the corresponding flat $G$-bundle on $X$ with trivial $p$-curvature by Corollary \ref{cor_cartier_des_G}, where $V = F^*_{1} E$. Let $P$ be a parabolic subgroup of $G$. To prove the equivalence of stability conditions, it is enough to prove the following two statements:
\begin{enumerate}
\item there is a one-to-one correspondence between reductions of structure group $\sigma: (X^{\rm big})' \rightarrow (E|_{(X^{\rm big})'}) /P$ and $\nabla_{can}$-compatible reductions of structure group $\varsigma: X^{\rm big} \rightarrow (V|_{X^{\rm big}})/P$;
\item we have
\begin{align*}
\deg_H L(\varsigma,\chi) = p \deg_{H'} L(\sigma,\chi),
\end{align*}
where reductions $\sigma$ and $\varsigma$ are given by the first statement and $\chi: P \rightarrow \mathbb{G}_m$ is a character.
\end{enumerate}
For the first statement, although the stability condition (Definition \ref{defn_R_stab_G} and \ref{defn_stab_lambda}) is given for reductions of structure group over $X^{\rm big}$, it is enough to work on $X$. given a reduction of structure group $\sigma: X' \rightarrow E/P$, we obtain a reduction $\varsigma: X \rightarrow V/P$ via the relative Frobenius morphism $F_{1}$. Equivalently, the morphism $E_\sigma \rightarrow E$ induces a morphism $(V_{\varsigma},\nabla_{can,\varsigma}) \rightarrow (V,\nabla_{can})$ of flat $G$-bundles by the classical Cartier transform (Theorem \ref{thm_Cartier_des_Katz}), where $V_{\varsigma} = F^*_{1} E_{\sigma}$. Therefore, $\varsigma$ is $\nabla_{can}$-compatible. By Theorem \ref{thm_Cartier_des_Katz}, the correspondence between reduction of structure groups $\sigma$ and $\varsigma$ is a one-to-one correspondence. The second statement is clear with respect to the correspondence given in the first statement.
\end{proof}

Now we consider the case of nilpotent $G$-Higgs bundles and nilpotent flat $G$-bundles of exponent $\leq p-1$.
\begin{thm}\label{thm_HIG_MIC_G_stab}
The categories ${\rm HIG}^{(s)s}_{p-1}(X'/k,G)$ and ${\rm MIC}^{(s)s}_{p-1}(X/k,G)$ are equivalent.
\end{thm}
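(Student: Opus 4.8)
\textbf{Proof plan for Theorem \ref{thm_HIG_MIC_G_stab}.}
The plan is to upgrade the equivalence of Theorem \ref{thm_HIG_MIC_G}, realized by the functors $C^{-1}_{\rm exp}$ and $C_{\rm exp}$, to one that matches $R$-(semi)stable objects on both sides. It suffices to show that if $(E,\theta) \in {\rm HIG}_{p-1}(X'/k,G)$ and $(V,\nabla) = C^{-1}_{\rm exp}(E,\theta)$, then $(E,\theta)$ is $R$-(semi)stable if and only if $(V,\nabla)$ is. Concretely, following the strategy of Proposition \ref{prop_cart_des_G_stab}, I would establish the following two assertions: (i) for each maximal parabolic $P \subseteq G$ there is a bijection between $\theta$-compatible reductions of structure group $\sigma : (X^{\rm big})' \to (E|_{(X^{\rm big})'})/P$ and $\nabla$-compatible reductions $\varsigma : X^{\rm big} \to (V|_{X^{\rm big}})/P$; and (ii) under this bijection, for any character $\chi : P \to \mathbb{G}_m$, one has $\deg_H L(\varsigma,\chi) = p\,\deg_{H'} L(\sigma,\chi)$, so that the sign of the degree — hence (semi)stability — is preserved. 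The identity $H^p \cong F^*_{X/k}H'$ recorded in \S\ref{subsect_stab_cond} is exactly what gives the factor $p$ in (ii), just as in Proposition \ref{prop_cart_des_G_stab}; once (i) is in place, (ii) is immediate because the underlying $P$-bundle $E_\sigma$ pulls back (and glues) to $V_\varsigma$ on the big open set and degree multiplies by $p$ under $F_1^*$.

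The heart of the matter is therefore (i): the compatibility of the exponential-twisting construction with reductions of structure group. First I would reduce to working over $X$ rather than $X^{\rm big}$, exactly as in the proof of Proposition \ref{prop_cart_des_G_stab}, since codimension-$\geq 2$ loci do not affect the degree of a line bundle and reductions extend uniquely over big open sets. Given a $\theta$-compatible reduction $\sigma$ with lift $\theta_\sigma : X' \to E_\sigma(\mathfrak{p}) \otimes \Omega_{X'}$, I would run the construction of \S\ref{subsect_inv_car_G} with $G$ replaced by $P$: the key point is that the nilpotent sections $F_1^*\theta_{ij}$ now take values in $\mathfrak{u}(\mathcal{O}_{U'_{ij}})$ with $\mathfrak{u}$ the Lie algebra of a unipotent radical of a parabolic of $P$ (not just of $G$), so by the $P$-equivariance clause of Theorem \ref{thm_sob3.4} the cocycle $G_{ij} = \overline{\exp}(h_{ij}(F_1^*\theta_{ij}))$ already lies in $P$ (indeed in the relevant unipotent subgroup of $P$). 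Hence the glued bundle $V_\varsigma := C^{-1}_{\rm exp,P}(E_\sigma,\theta_\sigma)$ is a $P$-bundle equipped with a connection $\nabla_\varsigma = \nabla_{can} + \zeta(F_1^*\theta_\sigma)$, and functoriality of $C^{-1}_{\rm exp}$ with respect to the inclusion $E_\sigma(\mathfrak p) \hookrightarrow E(\mathfrak g)$ — which holds because $\overline{\exp}$ and the maps $h_{ij}$, $\zeta_i$ are all compatible with this inclusion — shows $V_\varsigma \times_P G \cong V$ compatibly with connections, i.e. $\varsigma$ is a $\nabla$-compatible reduction. For the reverse direction I would apply the same argument to the Cartier transform $C_{\rm exp}$, using that $C_{\rm exp}\circ C^{-1}_{\rm exp} \cong {\rm id}$ to see the two assignments are mutually inverse; alternatively one extracts $\varsigma$ first and recovers $\sigma$ by applying $C_{\rm exp}$ to $(V_\varsigma,\nabla_\varsigma)$, invoking Corollary \ref{cor_cartier_des_G} for the underlying Cartier descent bijection at the level of the resulting $p$-flat data.

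The main obstacle I anticipate is verifying cleanly that the exponential-twisting gluing respects the parabolic structure — that is, that $G_{ij}$ and $J_{ij}$ land in $P$ and that $\nabla_\varsigma$, $\theta_\sigma$ are genuine $P$-connections/$P$-Higgs fields rather than merely $G$-objects admitting a reduction. This hinges on the precise form of the $P$-equivariant isomorphism $\mathfrak u \xrightarrow{\cong} U$ in Theorem \ref{thm_sob3.4}: one needs the unipotent radical appearing in Definition \ref{defn_nil} for $\theta_\sigma$ to sit inside $P$, which follows since a parabolic of $P$ is a parabolic of $G$ and its unipotent radical is contained in that of the latter restricted to $P$ — so the nilpotency-class bound $< p$ (guaranteed by exponent $\leq p-1$) is inherited, and $\overline{\exp}$ restricts appropriately. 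A secondary subtlety is that $\nabla_{can}$ on $F_1^*E_\sigma$ is compatible with the one on $F_1^*E$ under the inclusion, which is Katz's functoriality and is already used in Proposition \ref{prop_cart_des_G_stab}. Once these functoriality statements are recorded, assertions (i) and (ii) follow formally and the theorem is proved; I would also note that the same bookkeeping applies verbatim with $C^{-1}_{\rm exp}$ replaced by $C_{\rm exp}$, giving the stable and semistable statements simultaneously.
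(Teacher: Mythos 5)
Your plan matches the paper's proof in both structure and substance: the paper likewise reduces the theorem to a bijection between $\theta$-compatible reductions $\sigma$ and $\nabla$-compatible reductions $\varsigma$ (the degree identity $\deg_H L(\varsigma,\chi)=p\,\deg_{H'}L(\sigma,\chi)$ being handled exactly as in Proposition \ref{prop_cart_des_G_stab}), and it establishes the bijection by running the exponential-twisting construction at the level of the $P$-bundle $(E_\sigma,\theta_\sigma)$, gluing the local reductions via $G_{\sigma,ij}=\overline{\exp}(h_{ij}(F_1^*\theta_\sigma))$ in one direction and via $J_{\varsigma,ij}=\overline{\exp}(h_{ij}(\psi_\varsigma))$ together with Cartier descent (Proposition \ref{prop_cart_des_G_stab}) in the other. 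Your explicit attention to the cocycles landing in $P$ via the $P$-equivariance clause of Theorem \ref{thm_sob3.4} is a point the paper leaves implicit, but it is the same argument, so the proposal is correct and essentially identical to the paper's proof.
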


\begin{proof}
Let $(E,\theta) \in {\rm HIG}_{p-1}(X'/k,G)$ be a $G$-Higgs bundle, and denote by $(V,\nabla) \in {\rm MIC}_{p-1}(K/k,G)$ the corresponding flat $G$-bundle under the equivalence given by Theorem \ref{thm_HIG_MIC_G}. With the same approach as Proposition \ref{prop_cart_des_G_stab}, it is enough to show that $\theta$-compatible reductions $\sigma: X' \rightarrow E/P$ are in one-to-one correspondence with $\nabla$-compatible reductions $\varsigma: X \rightarrow V/P$.

\hspace{\fill}

Let $\sigma: X' \rightarrow E/P$ be a $\theta$-compatible reduction. Denote by $\theta_\sigma: X' \rightarrow E_\sigma(\mathfrak{p}) \otimes \Omega_{X'}$ the lifting of $\theta$ via $\sigma$. We work on local charts $U_i$ first and construct a $\nabla_i$-compatible reduction $\varsigma_i$, where $\nabla_i:=\nabla|_{U_i}$, and then we will show that the local data $\varsigma_i$ can be glued together. We use the same notation as we did in \S\ref{subsect_inv_car_G} and \S\ref{subsect_car_G}. Denote by $\sigma_i=\sigma|_{U_i}$ the restriction to $U_i$. In \S\ref{subsect_inv_car_G}, we construct $(V_i,\nabla_i)$ on $U_i$, where $V_i = F^*_1 E_i$ and $\nabla_i = \nabla_{can} + \zeta_i(F^*_1 \theta)$. Let $\varsigma_i: U_i \rightarrow V_i/P$ be the corresponding reduction of $\sigma_i$ given in Proposition \ref{prop_cart_des_G_stab}, and then $\varsigma_i$ is compatible with $\nabla_{can}$. Since $\sigma_i$ is compatible with $\theta_i$, the reduction $\varsigma_i$ is compatible with $F^*_1 \theta_i$ and thus compatible with $\zeta_i(F^*_1 \theta)$. In conclusion, the reduction $\varsigma_i$ is compatible with $\nabla_i$, and we denote by $\nabla_{\varsigma,i}$ the induced $P$-connection on $V_{\varsigma,i}$. Then, we have the following diagrams on $U_i$.
\begin{center}
\begin{tikzcd}
V_{\varsigma,i} \arrow[r] \arrow[d] & V_i \arrow[d] \\
U_i \arrow[r,"\varsigma_i"] & V_i/P
\end{tikzcd}\quad \quad \quad
\begin{tikzcd}
& V_{\varsigma,i}(\mathfrak{p}) \arrow[d] \\
T_{U_i} \arrow[r,"\nabla_i"] \arrow[ur,"\nabla_{\varsigma,i}"] & V_i(\mathfrak{g}) .
\end{tikzcd}
\end{center}
Moreover, we obtain a flat $P$-bundle $(V_{\varsigma},\nabla_{\varsigma})$ by gluing local data $(V_{\varsigma,i},\nabla_{\varsigma,i})$ via
\begin{align*}
    G_{\sigma,ij}:= \overline{\exp}(h_{ij}(F^*_1( \theta_\sigma )) )
\end{align*}
with the same argument as in \S\ref{subsect_inv_car_G}.

Now we will show that $\varsigma_i$ can be glued together via $G_{ij}$, and it is equivalent to prove that the morphisms $\{V_{\varsigma,i} \rightarrow V_i\}_{i \in I}$ can be glued together. For convenience, we use the same notation $\varsigma_i$ for the induced morphism $V_{\varsigma,i} \rightarrow V_i$, and we have to show that the following diagram commutes.
\begin{center}
\begin{tikzcd}
V_{\varsigma,j}(\mathfrak{p}) \arrow[r, "G_{\sigma,ij}"] \arrow[d,"\varsigma_j"] & V_{\varsigma,i}(\mathfrak{p}) \arrow[d,"\varsigma_i"] \\
V_j(\mathfrak{g}) \arrow[r,"G_{ij}"] & V_i(\mathfrak{g})
\end{tikzcd}
\end{center}
By the compatibility of $\nabla_i$ and $\nabla_j$, we have
\begin{align*}
    \nabla_j = \varsigma_j \circ \nabla_{\varsigma,j}, \quad \nabla_i = \varsigma_i \circ \nabla_{\varsigma,i}.
\end{align*}
Since $\{\nabla_{\varsigma,i}, \, i \in I\}$ and $\{ \nabla_{i}, \,  i \in I\}$ can be glued together via $G_{\sigma,ij}$ and $G_{ij}$ respectively, we have
\begin{align*}
    G_{\sigma,ij} \circ \nabla_{\varsigma,j} = \nabla_{\varsigma,i}, \quad G_{ij} \circ \nabla_{j} = \nabla_{i}.
\end{align*}
Thus, we obtain the desired property
\begin{align*}
    G_{ij} \circ \varsigma_j = \varsigma_i \circ G_{\sigma,ij}
\end{align*}
by the following diagram.
\begin{center}
\begin{tikzcd}
& T_{U_{ij}} \arrow[ld, "\nabla_{\varsigma,j}" description] \arrow[lddd, "\nabla_j" description] \arrow[rd, "\nabla_{\varsigma,i}" description] \arrow[rddd, "\nabla_i" description] & \\
V_{\varsigma,j}(\mathfrak{p}) \arrow[rr, "G_{\sigma,ij}"] \arrow[dd,"\varsigma_j"] & & V_{\varsigma,i}(\mathfrak{p}) \arrow[dd,"\varsigma_i"] \\
& & & \\
V_j(\mathfrak{g}) \arrow[rr,"G_{ij}"] & & V_i(\mathfrak{g})
\end{tikzcd}
\end{center}
In conclusion, we obtain a $\nabla$-compatible reduction $\varsigma: X \rightarrow V/P$.

\hspace{\fill}

The other direction is proved in a similar way. Let $\varsigma:X \rightarrow V/P$ be a $\nabla$-compatible reduction. Clearly, $\varsigma$ is also compatible with the $p$-curvature $\psi$. Denote by $\nabla_\varsigma$ (resp. $\psi_{\varsigma}$) the induced connection ($p$-curvature) by $\nabla$. Recall that we defined a triple $(V_i,\nabla'_i,\psi_i)$ on each $U_i$, where
\begin{align*}
    V_i=V|_{U_i}, \quad \nabla'_i = \nabla_i + \zeta_i(\psi), \quad \psi_i=\psi|_{U_i}.
\end{align*}
Since $\varsigma$ is compatible with $\nabla$ and $\psi$, the restriction $\varsigma_i:=\varsigma|_{U_i}$ is also compatible with $\nabla_i$ and $\psi_i$ (thus compatible with $\nabla'_i$). Let $V_{\varsigma,i}$ be the restriction of the $P$-bundle $V_{\varsigma,i}$ to $U_i$, and denote by $\nabla_{\varsigma,i}$ the induced connection on $V_{\varsigma,i}$. Abusing the notation, we use the same notation $\varsigma_i: V_{\varsigma,i} \rightarrow V_{i}$ for the induced morphism. Locally, we have the following diagram
\begin{center}
\begin{tikzcd}
& T_{U_{ij}} \arrow[ld, "\psi_{\varsigma,j}" description] \arrow[lddd, "\psi_j" description] \arrow[rd, "\psi_{\varsigma,i}" description] \arrow[rddd, "\psi_i" description] & \\
V_{\varsigma,j}(\mathfrak{p}) \arrow[rr, "J_{\varsigma,ij}"] \arrow[dd,"\varsigma_j"] & & V_{\varsigma,i}(\mathfrak{p}) \arrow[dd,"\varsigma_i"] \\
& & & \\
V_j(\mathfrak{g}) \arrow[rr,"J_{ij}"] & & V_i(\mathfrak{g})
\end{tikzcd}
\end{center}
where
\begin{align*}
    J_{\varsigma,ij}:=\overline{\exp}(h_{ij}(\psi_{\varsigma})),
\end{align*}
such that
\begin{itemize}
    \item $\psi$-compatibility:
    \begin{align*}
        \psi_j = \varsigma_j \circ \psi_{\varsigma,j}, \quad \psi_i = \varsigma_i \circ \psi_{\varsigma,i}.
    \end{align*}
    \item gluing property:
    \begin{align*}
        J_{\varsigma,ij} \circ \psi_{\varsigma,j} = \psi_{\varsigma,i}, \quad  J_{ij} \circ \psi_{j} = \psi_{i}.
    \end{align*}
\end{itemize}
Therefore,
\begin{align*}
    J_{ij} \circ \varsigma_j = \varsigma_i \circ J_{\varsigma,ij},
\end{align*}
and then the local data $\{\varsigma_i\}_{i \in I}$ can be glued via $J_{ij}$, and denote it by $\varsigma'$. Clearly, the reduction $\varsigma'$ is compatible with both $\nabla'$ and $\psi'$. By Proposition \ref{prop_cart_des_G_stab}, $(V',\nabla')$ descent to a $G$-bundle $E$ on $X'$. Thus, $\varsigma'$ descents to a reduction $\sigma: X' \rightarrow E/P$. Since $\varsigma'$ is $\psi'$-compatible, the reduction $\sigma$ is $\theta$-compatible. This finishes the proof.
\end{proof}

\section{Logarithmic G-Higgs bundles and Flat G-bundles on Root Stacks}

In this section, we are mostly interested in a special classes of smooth Deligne--Mumford stacks $\mathscr{X}$ considered and discussed in \cite{MO05,Cad07,Sim11}, which are called \emph{root stacks} in this paper (Theorem \ref{thm_root_stack}). We prove an equivalence of categories between logarithmic nilpotent $G$-Higgs bundles and logarithmic nilpotent flat $G$-bundles of exponent $\leq p-1$ on root stacks (Theorem \ref{thm_HIG_MIC_G_log_stack}), which also preserves stability conditions (Theorem \ref{thm_HIG_MIC_G_stab_stack}).

\subsection{G-Higgs Bundles and Flat G-bundles on DM stacks}\label{subsect_Higgs_conn_stack}

Recall that a \emph{Deligne--Mumford stack} over $k$ is an algebraic stack such that there exists a surjective \'etale morphism $Y \rightarrow \mathscr{X}$, where $Y$ is a scheme. It is well-known that $\mathscr{X} \cong [Y/R]$, where $R= Y \times_{\mathscr{X}} Y$ is an algebraic space together with two \'etale morphisms
\begin{align*}
    p_i: R = Y \times_{\mathscr{X}} Y \rightarrow Y
\end{align*}
for $i=1,2$. For convenience, we suppose that $R$ is a scheme. In fact, we can apply all arguments in this section to algebraic spaces first, and then move to Deligne--Mumford stacks. Let $F_1:=F_{\mathscr{X}/k}: \mathscr{X} \rightarrow \mathscr{X}'$ be the relative Frobenius morphism and we suppose that $\mathscr{X}$ is $W_2(k)$-liftable. Here is an equivalent description under the isomorphism $\mathscr{X} \cong [Y/R]$. The relative Frobenius morphism $F_{\mathscr{X}/k}$ is induced from those on $Y$ and $R$, and the condition that $\mathscr{X}$ is $W_2(k)$-liftable is equivalent to say that $Y$ and $R$ are $W_2(k)$-liftable and the following diagram commutes
\begin{center}
\begin{tikzcd}
    R_2 \arrow[r,"\widetilde{p}_i"] & Y_2 \arrow[r] & \Spec \, W_2(k) \\
    R \arrow[u] \arrow[r, "p_i"] & Y \arrow[r] \arrow[u] & \Spec \, k \arrow[u] \, ,
\end{tikzcd}
\end{center}
where $\widetilde{p}_i$ is an \'etale morphism for $i=1,2$. The stack $\mathscr{X}$ is called \emph{tame} if the induced functor
\begin{align*}
    \pi_*: {\rm QCoh}(\mathscr{X}) \rightarrow {\rm QCoh}(X)
\end{align*}
is exact, where ${\rm QCoh}$ is the category of quasi-coherent sheaves. We want to remind the reader that the functor $\pi_*$ is always exact in the case of characteristic zero. In positive characteristic, there are several equivalent conditions of tameness \cite[Theorem 3.2]{AOV08}.

Suppose that $\mathscr{X}$ is tame, smooth and $W_2(k)$-liftable with coarse moduli space $\pi: \mathscr{X} \rightarrow X$, where $X$ is a smooth variety over $k$. Denote by $\Omega_{\mathscr{X}} := \Omega_{\mathscr{X}/k}$ the relative cotangent sheaf. Given $\mathscr{X} \cong [Y/R]$, a \emph{$G$-bundle} $\mathscr{E}$ on $\mathscr{X}$ can be regarded as a pair $(E,\tau)$, where $E$ is a $G$-bundle on $Y$ and $\tau: p_1^* E \xrightarrow{\cong} p_2^*E$ is an isomorphism satisfying the cocycle condition on $Y \times_{\mathscr{X}} Y \times_{\mathscr{X}} Y$. In the same way, a \emph{$\lambda$-connection} on $\mathscr{X}$ is regarded as a triple $(E,\nabla,\tau)$ on $Y$, where $(E,\nabla)$ is a $\lambda$-connection on $Y$ and $\tau: p_1^* (E,\nabla) \rightarrow p_2^* (E,\nabla)$ is an isomorphism satisfying the cocycle condition. The nilpotency for $G$-Higgs bundles and flat $G$-bundles on $\mathscr{X} \cong [Y/ R]$ is defined as the nilpotency of the corresponding $G$-Higgs bundles and flat $G$-bundles on $Y$. Note that this definition does not depend on the choice of the surjective \'etale morphism $Y \rightarrow \mathscr{X}$. We define the following categories:
\begin{itemize}
    \item ${\rm HIG}_{0}(\mathscr{X}/k,G)$ is the category of $G$-bundles on $\mathscr{X}$,
    \item ${\rm MIC}_{0}(\mathscr{X}/k,G)$ is the category of flat $G$-bundles on $\mathscr{X}$ with trivial $p$-curvature,
    \item ${\rm HIG}_{p-1}(\mathscr{X}/k,G)$ is the category of nilpotent $G$-Higgs bundles on $\mathscr{X}$ of exponent $\leq p-1$,
    \item ${\rm MIC}_{p-1}(\mathscr{X}/k,G)$ is the category of nilpotent flat $G$-bundles on $\mathscr{X}$ of exponent $\leq p-1$.
\end{itemize}

\begin{prop}\label{prop_cartier_des_G_stack}
We have an equivalence of categories between ${\rm HIG}_0(\mathscr{X}'/k,G)$ and ${\rm MIC}_0(\mathscr{X}/k,G)$.
\end{prop}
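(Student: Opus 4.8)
The plan is to descend the statement from the \'etale groupoid presentation $\mathscr{X}\cong[Y/R]$ to the scheme (or algebraic space) level, where we already have the Cartier equivalence of Corollary~\ref{cor_cartier_des_G}. Write $\mathscr{X}'\cong[Y'/R']$ for the Frobenius twist, with structure maps $p'_i\colon R'\to Y'$. By construction the relative Frobenius $F_1=F_{\mathscr{X}/k}$ is induced by $F_{Y/k}$ and $F_{R/k}$, and these fit into commutative (indeed cartesian, since $p_i$ is \'etale) squares $p'_i\circ F_{R/k}=F_{Y/k}\circ p_i$ for $i=1,2$, together with the analogous squares over the triple fibre product $Y\times_{\mathscr{X}}Y\times_{\mathscr{X}}Y$. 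Recall that a $G$-bundle on $\mathscr{X}'$ is a pair $(E',\tau')$ with $E'$ a $G$-bundle on $Y'$ and $\tau'\colon p_1'^{*}E'\xrightarrow{\cong}p_2'^{*}E'$ satisfying the cocycle condition, while a flat $G$-bundle with trivial $p$-curvature on $\mathscr{X}$ is a triple $(V,\nabla,\tau)$ with $(V,\nabla)$ on $Y$ of trivial $p$-curvature and $\tau$ a horizontal isomorphism satisfying the cocycle condition.

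The first step is to run the functor of Corollary~\ref{cor_cartier_des_G} on $Y$: given $(E',\tau')$, set $V:=F_{Y/k}^{*}E'$ with its canonical connection $\nabla_{can}$, which has trivial $p$-curvature. The essential point is the compatibility of this construction with the base changes $p_i$: one has canonical identifications $p_i^{*}F_{Y/k}^{*}E'\cong F_{R/k}^{*}p_i'^{*}E'$ under which $p_i^{*}\nabla_{can}$ corresponds to the canonical connection of $F_{R/k}^{*}p_i'^{*}E'$ — this is the standard functoriality of Katz's canonical connection with respect to base change along an arbitrary morphism. Consequently $F_{R/k}^{*}\tau'$, which a priori is only an isomorphism of $G$-bundles $p_1^{*}V\cong p_2^{*}V$ over $R$, is automatically horizontal for the canonical connections, i.e.\ defines an isomorphism $\tau$ of flat $G$-bundles; and the cocycle condition for $\tau$ over $Y\times_{\mathscr{X}}Y\times_{\mathscr{X}}Y$ follows from that for $\tau'$ by the same functoriality over the triple fibre product. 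Thus $(E',\tau')\mapsto(V,\nabla_{can},\tau)$ is a well-defined functor ${\rm HIG}_0(\mathscr{X}'/k,G)\to{\rm MIC}_0(\mathscr{X}/k,G)$.

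For the quasi-inverse one argues symmetrically: given $(V,\nabla,\tau)$ on $\mathscr{X}$ with trivial $p$-curvature, Corollary~\ref{cor_cartier_des_G} applied on $Y$ produces a $G$-bundle $E'$ on $Y'$ with $(V,\nabla)\cong(F_{Y/k}^{*}E',\nabla_{can})$, and the same \'etale-base-change compatibility turns the horizontal descent datum $\tau$ into a descent datum $\tau'$ for $E'$ satisfying the cocycle condition. Since on $Y$, and compatibly on $R$, the two constructions are mutually inverse by Corollary~\ref{cor_cartier_des_G}, the induced functors are mutually quasi-inverse, which proves the equivalence. If $R$ is only an algebraic space one first carries out all of the above for algebraic spaces, using that Corollary~\ref{cor_cartier_des_G} and the functoriality of $\nabla_{can}$ hold in that generality, and then passes to the quotient stack.

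The only genuine point to verify is the highlighted compatibility of the canonical connection with the \'etale structure maps $p_i$ — equivalently, that the Cartier descent functor of Corollary~\ref{cor_cartier_des_G} is a functor of fibred categories over the \'etale site — together with the resulting horizontality of the transported descent datum; everything else is bookkeeping with cocycle conditions. This is the main, and only mild, obstacle.
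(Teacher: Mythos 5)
Your proof is correct and follows essentially the same route as the paper: both work with the groupoid presentation $\mathscr{X}\cong[Y/R]$, apply Corollary \ref{cor_cartier_des_G} on $Y$ (and $R$), and use the compatibility $p'_i\circ F_{R,1}=F_{Y,1}\circ p_i$ together with the uniqueness/base-change functoriality of the canonical connection to transport the descent datum $\tau'$ to a horizontal cocycle $\tau$, with the quasi-inverse handled symmetrically. The only cosmetic difference is that you phrase the key step as functoriality of Katz's canonical connection under base change, while the paper invokes uniqueness of the canonical connection; these are the same point.
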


\begin{proof}
With the same setup as above, we prove the equivalence via the isomorphism $\mathscr{X} \cong [Y/R]$. We have the following commutative diagram,
\begin{center}
\begin{tikzcd}
    R' \arrow[r, "p'_i"] & Y' \\
    R \arrow[u, "F_{R,1}"] \arrow[r, "p_i"] & Y \arrow[u, "F_{Y,1}"]
\end{tikzcd}
\end{center}
where $F_{R,1}$ and $F_{Y,1}$ are the relative Frobenius morphisms. Given a $G$-bundle $\mathscr{E}$ on $\mathscr{X}'$, it can be regarded as a pair $(E,\tau')$, where $E$ is a $G$-bundle on $Y'$ and $\tau': (p'_1)^* E \cong (p'_2)^* E$ is an isomorphism of $G$-bundles on $R'$ satisfying the cocycle condition. Since $F_{Y,1} \circ p = p' \circ F_{R,1}$, where $p=p_1, p_2$, we have $F_{R,1}^* p'^* E \cong p^* F_{Y,1}^* E$. By Corollary \ref{cor_cartier_des_G}, the $G$-bundle $E$ on $Y'$ corresponds to a flat $G$-bundle $(V,\nabla_{can})$ on $Y$, where $V = F^*_{Y,1} E$. On the other hand,
the $G$-bundle $p'^* E$ on $R'$ corresponds to a flat $G$-bundle on $R$. This flat $G$-bundle is isomorphic to $p^*(V,\nabla_{can})$ by the isomorphism $F_{R,1}^* p'^* E \cong p^* F_{Y,1}^* E$ and the uniqueness of the canonical connection. Thus, we obtain an isomorphism $\tau: p_1^* (V, \nabla_{can}) \cong p_2^*(V, \nabla_{can})$ induced from $\tau'$. The data $(
(V,\nabla_{can}), \tau )$ gives a flat $G$-bundle $(\mathscr{V},\nabla_{can})$ with trivial $p$-curvature on $\mathscr{X}$. The other direction can be proved in the same way.
\end{proof}

\begin{thm}\label{thm_HIG_MIC_G_stack}
The categories ${\rm HIG}_{p-1}(\mathscr{X}'/k,G)$ and $ {\rm MIC}_{p-1}(\mathscr{X}/k,G)$ are equivalent.
\end{thm}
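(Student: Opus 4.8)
The plan is to reduce Theorem~\ref{thm_HIG_MIC_G_stack} to Theorem~\ref{thm_HIG_MIC_G} by working on an étale presentation $\mathscr{X} \cong [Y/R]$ and checking that the construction of the Cartier and inverse Cartier transforms via exponential twisting is étale-local and therefore descends along the groupoid $R \rightrightarrows Y$. Concretely, given a nilpotent $G$-Higgs bundle $(\mathscr{E},\theta) \in {\rm HIG}_{p-1}(\mathscr{X}'/k,G)$, regard it as a triple $(E,\theta_Y,\tau)$ where $(E,\theta_Y)$ is a nilpotent $G$-Higgs bundle on $Y'$ and $\tau: p_1^*(E,\theta_Y) \xrightarrow{\cong} p_2^*(E,\theta_Y)$ satisfies the cocycle condition on $R' \times_{Y'} R'$. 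Since $Y$ is $W_2(k)$-liftable (this is exactly the hypothesis that $\mathscr{X}$ is $W_2(k)$-liftable, spelled out in \S\ref{subsect_Higgs_conn_stack}), one may apply the functor $C^{-1}_{\rm exp}$ of \S\ref{subsect_inv_car_G} to obtain a nilpotent flat $G$-bundle $(V,\nabla)$ on $Y$ with $V = F^*_{Y,1}E$. The key point is that $C^{-1}_{\rm exp}$ is functorial, so $\tau$ induces an isomorphism of flat $G$-bundles $p_1^*(V,\nabla) \xrightarrow{\cong} p_2^*(V,\nabla)$ on $R$ — here one uses that $F_{Y,1}\circ p_i = p'_i \circ F_{R,1}$, so $F^*_{R,1}(p'_i)^*E \cong p_i^* F^*_{Y,1}E$ — and that this induced isomorphism still satisfies the cocycle condition because $C^{-1}_{\rm exp}$ respects pullback and composition. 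The resulting descent datum defines a flat $G$-bundle on $\mathscr{X}$, which is nilpotent of exponent $\leq p-1$ by definition of nilpotency on $[Y/R]$, i.e.\ because its pullback to $Y$ is. This gives a functor $C^{-1}_{\rm exp}: {\rm HIG}_{p-1}(\mathscr{X}'/k,G) \to {\rm MIC}_{p-1}(\mathscr{X}/k,G)$, and the analogous argument with $C_{\rm exp}$ from \S\ref{subsect_car_G} gives a functor in the other direction.

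The second step is to verify that these two stacky functors are mutually inverse. This is immediate once one knows the functoriality: on the étale chart $Y$ the composites $C_{\rm exp}\circ C^{-1}_{\rm exp}$ and $C^{-1}_{\rm exp}\circ C_{\rm exp}$ are naturally isomorphic to the identity by Theorem~\ref{thm_HIG_MIC_G}, and the natural transformations realizing these isomorphisms are themselves compatible with the descent data (again by functoriality of the chart-level construction applied to $\tau$), hence descend to natural isomorphisms of the stacky functors. One subtlety worth flagging: the affine open cover $\{U_i\}$ and the Deligne--Illusie data $\{h_{ij}\}$ of Lemma~\ref{lem_DI} used to build $C^{-1}_{\rm exp}$ on $Y$ are auxiliary choices, and one should note that the functor $C^{-1}_{\rm exp}$ up to natural isomorphism is independent of them; this independence is what allows the construction on $Y$ to be transported consistently along $p_1$ and $p_2$ (which pull the cover on $Y$ back to two a priori different covers on $R$). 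This independence-of-choices statement is implicit in \S\ref{subsect_inv_car_G} and should be invoked explicitly.

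The main obstacle is precisely this compatibility of the exponential-twisting construction with the étale base change $R \to Y$: one must check that the gluing cocycles $G_{ij} = \overline{\exp}(h_{ij}(F_1^*\theta))$ and the locally-defined connections $\nabla_i = \nabla_{can,i} + \zeta_i(F_1^*\theta)$ behave well under pullback along an étale morphism, and that the canonical connection $\nabla_{can}$ of Cartier descent (Corollary~\ref{cor_cartier_des_G}), which already commutes with étale base change by its uniqueness, interacts correctly with the extra term $\zeta_i(F_1^*\theta)$. Since $\zeta_i$ is built from a Frobenius lifting and étale morphisms lift uniquely to $W_2(k)$, the term $\zeta_i$ is compatible with étale pullback, and the exponential map $\overline{\exp}$ is a morphism of schemes over $k$ so it commutes with any base change; the nilpotency condition is preserved because it is detected on an étale cover. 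Assembling these observations, the cocycle condition for the transported descent datum reduces to the cocycle identity \eqref{eq_G} on a triple fiber product, and the proof concludes. The argument is formally parallel to the proof of Proposition~\ref{prop_cartier_des_G_stack}, with Theorem~\ref{thm_HIG_MIC_G} playing the role that Corollary~\ref{cor_cartier_des_G} played there.
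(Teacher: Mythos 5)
Your proposal is correct and follows essentially the same route as the paper: both reduce to Theorem~\ref{thm_HIG_MIC_G} via the presentation $\mathscr{X}\cong[Y/R]$ and rest on the compatibility of the exponential-twisting construction with the \'etale maps $p_i:R\to Y$, which is exactly the paper's identity \eqref{eq_etale_mor} and the ensuing claim that $C^{-1}_{\rm exp}$ commutes with $p_i^*$. The only difference is one of implementation: where you invoke independence of $C^{-1}_{\rm exp}$ from the auxiliary cover and Deligne--Illusie data, the paper instead fixes compatible choices (the cover on $R$ pulled back from $Y$ and Frobenius liftings induced through the lifted \'etale morphisms $\widetilde p_i$ furnished by the $W_2(k)$-liftability of $\mathscr{X}$), so that the local data on $R$ coincide on the nose.
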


\begin{proof}
The proof of this theorem is a stacky version of that of Theorem \ref{thm_HIG_MIC_G}. We give a proof under the isomorphism $\mathscr{X} \cong [Y/ R]$. Before we move to construct inverse Cartier and Cartier transform, we first enrich Lemma \ref{lem_DI} under \'etale morphisms. We have the following diagram
\begin{center}
\begin{tikzcd}
& & R'_2 \arrow[rrr,"\widetilde{p}'" description] & & & Y'_2 \\
& & & & & \\
R' \arrow[rrr, "p'" description] \arrow[uurr] & & & Y' \arrow[uurr] & &\\
& & R_2 \arrow[rrr,"\widetilde{p}" description] \arrow[uuu, "F_{R,2}" description] & & & Y_2 \arrow[uuu, "F_{Y,2}" description] \\
& & & & & \\
R \arrow[rrr,"p" description] \arrow[uuu, "F_{R,1}" description] \arrow[uurr] & & & Y \arrow[uuu, "F_{Y,1}" description] \arrow[uurr] & &
\end{tikzcd}
\end{center}
where
\begin{itemize}
    \item $p$ and $\widetilde{p}$ are regarded as \'etale morphisms $p_i$ and $\widetilde{p}_i$ respectively, and $p'$ and $\widetilde{p}'$ are the induced ones;
    \item $F_{Y,1}$ and $F_{R,1}$ are relative Frobenius morphisms, and $F_{Y,2}$ and $F_{R,2}$ are Frobenius liftings, of which mod $p$ are $F_{Y,1}$ and $F_{R,1}$ respectively.
\end{itemize}
We fix an open cover $\{U_{Y,i}\}_{i \in I}$ of $Y$ and denote by $\{ U_{R,i} \}_{i \in I}$ the corresponding open cover of $R$ under the \'etale morphism $p$. By Lemma \ref{lem_DI}, we have
\begin{align*}
    & \zeta_{\bullet, i} := \frac{ F^*_{\bullet,2} }{[p]} : F^*_{\bullet,1} \Omega_{U'_{\bullet, i}} \rightarrow \Omega_{U_{\bullet,i}} \\
    & h_{\bullet,ij} : F^*_{\bullet,1} \Omega_{U'_{\bullet, ij}} \rightarrow \Omega_{U_{\bullet,ij}}
\end{align*}
such that
\begin{align*}
    & \zeta_{\bullet,i} - \zeta_{\bullet,i} = dh_{\bullet,ij} \\
    & h_{\bullet,ij} + h_{\bullet,jk} = h_{\bullet,ik},
\end{align*}
where $\bullet = Y \text{ or } R$. Since $F_{Y,2} \circ \widetilde{p} = \widetilde{p}' \circ F_{R,2}$ and $F_{Y,1} \circ p = p' \circ F_{R,1}$, the morphisms
\begin{align*}
    \frac{\widetilde{p}^* F^*_{Y,2}}{[p]}: p^* F^*_{Y,1} \Omega_{U'_{Y,i}} \rightarrow p^* \Omega_{U_{Y,i}} \cong \Omega_{U_{R,i}}
\end{align*}
and
\begin{align*}
    \frac{F^*_{R,2} \widetilde{p}'^* }{[p]}: F^*_{R,1} p'^* \Omega_{U'_{Y,i}} \rightarrow \Omega_{U_{R,i}}
\end{align*}
are the same, i.e.
\begin{equation}\label{eq_etale_mor}
    \frac{\widetilde{p}^* F^*_{Y,2}}{[p]} = \frac{F^*_{R,2} \widetilde{p}'^* }{[p]}.
\end{equation}

Now given a $G$-Higgs bundle on $\mathscr{X}'$, we regard it as triple a $(E,\theta,\tau')$, where $(E,\theta)$ is a $G$-Higgs bundle on $Y'$ and $\tau' : (p'_1)^*(E,\theta) \cong (p'_2)^*(E,\theta)$ is an isomorphism satisfying the cocycle condition. The inverse Cartier transform in \S\ref{subsect_inv_car_G} gives a flat $G$-bundle $(V,\nabla) := C^{-1}_{\rm exp}(E,\theta)$ on $Y$. We will construct an isomorphism $\tau: p_1^* (V,\nabla) \cong p_2^* (V,\nabla)$ satisfying the cocycle condition and thus obtain a flat $G$-bundle on $\mathscr{X}$. Since $p'_i: R' \rightarrow Y'$ is \'etale, $(p'_i)^* (E,\theta)$ is a $G$-Higgs bundle on $R'$. Applying the inverse Cartier transform $C^{-1}_{\rm exp}$ on $R'$ again, we obtain flat $G$-bundles $C^{-1}_{\rm exp} ( (p'_i)^* (E,\theta) )$ on $R$ for $i=1,2$ together with an isomorphism
\begin{align*}
    C^{-1}_{\rm exp}( \tau' ) : C^{-1}_{\rm exp} ( (p'_1)^* (E,\theta) ) \cong C^{-1}_{\rm exp} ( (p'_2)^* (E,\theta) ).
\end{align*}
We claim that
\begin{align*}
    C^{-1}_{\rm exp} ( (p')^* (E,\theta) ) \cong p^* (V,\nabla) = p^* ( C^{-1}_{\rm exp}(E,\theta) ),
\end{align*}
where $p=p_1,p_2$, and thus we obtain the desired isomorphism $\tau:=C^{-1}_{\rm exp}( \tau' )$ satisfying the cocycle condition.

Although the claim has been proved in \cite{She21}, we include the proof here because the construction of the Cartier transform is exactly given in the same way. Let $(E_i,\theta_i)$ be the restriction of $(E,\theta)$ on $U'_{Y,i}$. The flat $G$-bundle $p^* ( C^{-1}_{\rm exp}(E,\theta) )$ is obtained by gluing the local data
\begin{align*}
    (p^* F^*_{Y,1} E_i, \ \nabla_{can,i} + p^* \zeta_{Y,i}(F^*_{Y,1} \theta_i))_{i \in I}
\end{align*}
via
\begin{align*}
    \overline{\rm exp}( p^* h_{Y,ij} (F^*_{Y,1} \theta_{ij}) ).
\end{align*}
Similarly, $C^{-1}_{\rm exp} ( (p')^* (E,\theta) )$ is obtained by gluing the local data
\begin{align*}
    (F^*_{R,1} p'^* E_i, \ \nabla_{can,i} + \zeta_{R,i}(F^*_{R,1} p'^* \theta_i))_{i \in I}
\end{align*}
via
\begin{align*}
    \overline{\rm exp}( h_{R,ij} (F^*_{R,1} p'^* \theta_{ij}) ).
\end{align*}
Clearly,
\begin{align*}
    p^* F^*_{Y,1} E_i = F^*_{R,1} p'^* E_i.
\end{align*}
Then, by \eqref{eq_etale_mor}, we have
\begin{align*}
    p^* \zeta_{Y,i} (F^*_{Y,1} \theta_i) = \zeta_{R,i} (F^*_{R,1} p'^* \theta_i )
\end{align*}
and
\begin{align*}
    p^* h_{Y,ij} (F^*_{Y,1} \theta_{ij}) = h_{R,ij} (F^*_{R,1} p'^* \theta_{ij} ).
\end{align*}
Therefore,
\begin{align*}
    (p^* F^*_{Y,1} E_i, \ \nabla_{can,i} + p^* \zeta_{Y,i}(F^*_{Y,1} \theta_i)) = (F^*_{R,1} p'^* E_i, \ \nabla_{can,i} + \zeta_{R,i}(F^*_{R,1} p'^* \theta_i))
\end{align*}
and
\begin{align*}
    \overline{\rm exp}( p^* h_{Y,ij} (F^*_{Y,1} \theta_{ij}) ) = \overline{\rm exp}( h_{R,ij} (F^*_{R,1} p'^* \theta_{ij}) ).
\end{align*}
This finishes the proof for the claim.

The construction of the Cartier transform is exactly the same and we leave it for the reader.
\end{proof}

\subsection{G-bundles on Root Stacks}\label{subsect_G_stack}
We consider a special class of tame smooth Deligne--Mumford stacks studied in \cite{Bor07,MO05,Cad07,Sim11}, which are called \emph{root stacks} in this paper. We first review an existence theorem about such root stacks.

\begin{thm}[Theorem 4.1 in \cite{MO05}]\label{thm_root_stack}
    Let $X$ be a smooth variety with a reduced normal crossing divisor $D=\sum\limits_{i=1}^s D_i$. We equip each irreducible component $D_i$ with a positive integer $d_i$ coprime to $p$. Denote by $\boldsymbol{d}=\{d_i,1 \leq i \leq s\}$ the collection of these integers. Then there exists a tame smooth Deligne--Mumford stack $\pi: \mathscr{X} \rightarrow X$ together with a reduced normal crossing divisor $\widetilde{D} = \sum\limits_{i=1}^s \widetilde{D}_i \subseteq \mathscr{X}$ such that
    \begin{itemize}
    \item the coarse moduli space of $\mathscr{X}$ is $X$,
    \item the map $\pi$ is finite flat and induces an isomorphism over $X \backslash D$,
    \item the pullback $\pi^* \mathcal{O}_X(-D_i)$ is equal to $\mathcal{O}_{\mathscr{X}}(-d_i \widetilde{D_i})$.
    \end{itemize}
    This stack is denoted by $\mathscr{X}_{(X,D,\boldsymbol{d})}$ and is called a root stack.
\end{thm}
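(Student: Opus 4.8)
The plan is to realize $\mathscr{X}$ as an iterated root stack, namely as a fibre product over $X$ of one‑divisor root stacks, and then to verify each property in the list on suitable étale charts of $X$; the construction is the one of Cadman \cite{Cad07} (compare \cite{MO05}).

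First I would treat a single smooth component $D_1\subseteq X$, presented by a pair $(\mathcal{O}_X(D_1),s_1)$, together with $d=d_1$ satisfying $\gcd(d,p)=1$. Let $\mathscr{X}_1:=\sqrt[d]{(\mathcal{O}_X(D_1),s_1)/X}$ be the stack whose objects over $T\to X$ are triples $(M,t,\varphi)$ with $M$ a line bundle on $T$, $t\in\Gamma(T,M)$, and $\varphi\colon M^{\otimes d}\xrightarrow{\ \cong\ }\mathcal{O}_X(D_1)|_T$ carrying $t^{\otimes d}$ to $s_1|_T$. This is an algebraic stack over $X$, and étale‑locally it is explicit: over an étale chart $\operatorname{Spec}A\to X$ on which $\mathcal{O}_X(D_1)$ is trivial and $D_1=\{x=0\}$,
\[
\mathscr{X}_1\times_X\operatorname{Spec}A\;\cong\;\bigl[\,\operatorname{Spec}\bigl(A[u]/(u^{d}-x)\bigr)\big/\mu_{d}\,\bigr],
\]
with $\mu_d$ acting by $u\mapsto\zeta u$. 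Since $\gcd(d,p)=1$ the group scheme $\mu_d$ is finite étale, so the quotient is a separated Deligne--Mumford stack; the total space $\operatorname{Spec}(A[u]/(u^d-x))$ is a polynomial ring over $k$ (eliminate $x=u^d$), hence smooth, so $\mathscr{X}_1$ is smooth; and $\mu_d$ is diagonalizable and finite étale, in particular linearly reductive, so $\mathscr{X}_1$ is tame (alternatively invoke \cite[Theorem 3.2]{AOV08}). The invariant pushforward $\pi_{1*}\mathcal{O}_{\mathscr{X}_1}$ is locally $(A[u]/(u^d-x))^{\mu_d}=A$, so $\pi_1\colon\mathscr{X}_1\to X$ has coarse moduli space $X$, is finite flat of degree $d$, and is an isomorphism over $X\setminus D_1$ (there $s_1$ trivializes $\mathcal{O}_X(D_1)$, so $u^d=x$ is a $\mu_d$‑torsor and $[\text{torsor}/\mu_d]=X|_{X\setminus D_1}$). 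Finally the zero locus $\widetilde{D}_1:=\{t=0\}$ of the tautological section is a smooth effective Cartier divisor, locally $\{u=0\}$, and the relation $u^{d}=x$ upgrades to $\pi_1^{*}\mathcal{O}_X(-D_1)=\mathcal{O}_{\mathscr{X}_1}(-d\widetilde{D}_1)$.

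For the general case I would set $\mathscr{X}:=\mathscr{X}_1\times_X\cdots\times_X\mathscr{X}_s$ and let $\widetilde{D}_i\subseteq\mathscr{X}$ be the preimage of $\widetilde{D}_i\subseteq\mathscr{X}_i$. Using that $D=\sum_i D_i$ has normal crossings, each point of $X$ has an étale neighbourhood $\operatorname{Spec}A$ with regular coordinates $x_1,\dots,x_r$ cutting out exactly the branches of $D$ through that point, over which
\[
\mathscr{X}\times_X\operatorname{Spec}A\;\cong\;\Bigl[\,\operatorname{Spec}\bigl(A[u_1,\dots,u_r]/(u_i^{d_i}-x_i)_{i=1}^{r}\bigr)\big/{\textstyle\prod_{i=1}^{r}}\mu_{d_i}\,\Bigr].
\]
The elements $u_i^{d_i}-x_i$ form a regular sequence, eliminating the $x_i$ again produces a polynomial ring over $k$, so the total space is smooth; a product of the $\mu_{d_i}$ is finite étale and linearly reductive; hence $\mathscr{X}$ is a smooth tame Deligne--Mumford stack. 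Finiteness and flatness of $\pi$, the identification of the coarse space with $X$, the isomorphism over $X\setminus D$, and the relations $\pi^{*}\mathcal{O}_X(-D_i)=\mathcal{O}_{\mathscr{X}}(-d_i\widetilde{D}_i)$ are inherited from the one‑divisor case via the fibre product, and $\widetilde{D}=\sum_i\widetilde{D}_i$ is visibly a normal crossing divisor in each chart. The step deserving the most care — the main technical point — is checking that these local quotient presentations are canonical and glue: independence of the chosen chart, of the local equation $x_i$ of $D_i$, and of the trivialization of $\mathcal{O}_X(D_i)$, together with the assertion that $X$ is the coarse space globally rather than merely étale‑locally. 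All of this ultimately rests on the hypothesis that each $d_i$ is prime to $p$, which makes $\mu_{d_i}$ finite étale and linearly reductive and hence makes formation of invariants commute with the relevant base changes; granting that, the remaining verifications are routine bookkeeping with the moduli description above.
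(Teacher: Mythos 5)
The paper does not prove this statement at all — it is quoted verbatim from \cite[Theorem 4.1]{MO05} — and your argument is exactly the standard construction underlying that reference and \cite{Cad07}: take the fibre product over $X$ of the root stacks $\sqrt[d_i]{(\mathcal{O}_X(D_i),s_i)/X}$, and read off smoothness, tameness (linear reductivity/\'etaleness of $\mu_{d_i}$ since $p\nmid d_i$, cf. \cite{AOV08}), the coarse space, finite flatness, and the relation $\pi^*\mathcal{O}_X(-D_i)=\mathcal{O}_{\mathscr X}(-d_i\widetilde D_i)$ from the local chart $[\Spec(A[u_1,\dots,u_r]/(u_i^{d_i}-x_i))/\prod_i\mu_{d_i}]$; so the proposal is correct and follows essentially the same route as the cited source, and the "gluing" worry at the end is moot because your stack is defined globally by a moduli problem, so only the identification of the local charts needs checking. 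Two minor points: $\Spec(A[u]/(u^d-x))$ is not literally a polynomial ring but is \'etale over one (it is $A\otimes_{k[x,y_2,\dots,y_n]}k[u,y_2,\dots,y_n]$), which is all smoothness requires; and your argument tacitly assumes each irreducible component $D_i$ is smooth (strict normal crossings) — for a self-crossing component the chart would be $u^{d_i}=x_1x_2$, which is singular — but this is the reading intended here (cf. Example \ref{exmp_local_stack}) and in \cite{MO05}.
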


\begin{rem}\label{rem_Kaw_lem}
By Kawamata covering lemma \cite[Theorem 17]{Kaw81}, the data $(X,D,\boldsymbol{d})$ determines a Galois covering $\pi: Y \rightarrow X$ with Galois group $\Gamma$, and thus $\mathscr{X}_{(X,D,\boldsymbol{d})} \cong [Y/ \Gamma]$. Note that although we work on positive characteristic, the lemma is still true under the tameness condition (see \cite[\S 12]{BP22} for instance). Under this isomorphism, we regard $G$-bundles on $\mathscr{X}_{(X,D,\boldsymbol{d})}$ as $\Gamma$-equivariant $G$-bundles on $Y$, and we use the terminology $(\Gamma,G)$-bundles on $Y$ for convenience.
\end{rem}

\begin{exmp}\label{exmp_local_stack}
    The example has been discussed in \cite[Lemma 4.3]{MO05}, and we include it here because it is very helpful to understand the local structure around a point $\widetilde{x} \in \widetilde{D} \subseteq \mathscr{X}$. Let $X=\Spec \, k[z_1,\dots,z_d]$ and $D_i = Z (z_i)$ for $1 \leq i \leq s$, and we suppose that $s \leq d$. Given a collection of positive integers $\boldsymbol{d} = \{d_i, 1 \leq i \leq s\}$, we define a natural morphism
    \begin{align*}
        k[z_1,\dots,z_d] \longrightarrow k[w_1,\dots,w_d]
    \end{align*}
    such that
    \begin{align*}
        z_i \mapsto w_i^{d_i}, \text{ } 1 \leq i \leq s, \quad z_i \mapsto w_i, \text{ } i >s.
    \end{align*}
    Then, the root stack $\mathscr{X}_{(X,D,\boldsymbol{d})}$ given in Theorem \ref{thm_root_stack} is
    \begin{align*}
        \mathscr{X}_{(X,D,\boldsymbol{d})} \cong [\Spec(k[w_1,\dots,w_d]) / \mu_{d_1} \times \cdots \times \mu_{d_s}],
    \end{align*}
    where $\mu_{d_i}$ is the group scheme of $d_i$-th roots of unity.
\end{exmp}

In the following, if there is no ambiguity, we use the notation $\mathscr{X}$ for the root stack $\mathscr{X}_{(X,D,\boldsymbol{d})}$ given in Theorem \ref{thm_root_stack}. With the same notation as above, let $\eta_i$ be the generic point of $D_i$. We define a discrete valuation ring $R_i := \mathcal{O}_{X,\eta_i}$ with fraction field $K_i$. Denote by $\widehat{R}_i$ and $\widehat{K}_i$ the corresponding completions. More generally, if $R$ is a local ring, we always use the notation $\widehat{R}$ for the completion with $\widehat{K}$ the fraction field. There exists a discrete valuation ring $\widetilde{R}_i$ such that
\begin{align*}
\mathscr{X} \times_X \Spec \, R_i \cong [\Spec \, \widetilde{R}_i / \mu_{d_i}].
\end{align*}
Furthermore, let $w_i$ (resp. $z_i$) be a generator of the maximal ideal of $\widetilde{R}_i$ (resp. $R_i$), and then the isomorphism $\mathscr{X} \times_X \Spec \, R_i \cong [\Spec \, \widetilde{R}_i / \mu_{d_i}]$ is induced by a morphism $R_i \rightarrow \widetilde{R}_i$ such that $z_i$ is sent to $w_i^{d_i}$. Based on this property, we give the definition of \emph{type} of $G$-bundles $\mathscr{E}$ on $\mathscr{X}$. The scheme $\mathscr{E}_i: = \mathscr{E} \times_{\mathscr{X}} [\Spec \, \widetilde{R}_i / \mu_{d_i}]$ can be regarded as a $(\mu_{d_i},G)$-bundle on $\Spec \, \widetilde{R}_i$. Since $\widetilde{R}_i$ is a discrete valuation ring, a $\mu_{d_i}$-action on $\mathscr{E}_i$ is given by a representation $\rho_i: \mu_{d_i} \rightarrow T$ (see \cite[\S 2.2.4]{BS15} or \cite[\S 5]{BP22} for instance).
\begin{defn}
    Given a collection of representations $\boldsymbol\rho = \{\rho_i: \mu_{d_i} \rightarrow T, 1 \leq i \leq s\}$, a $G$-bundle $\mathscr{E}$ on $\mathscr{X}$  is of \emph{type $\boldsymbol\rho$} if the $\mu_{d_i}$-action on $\mathscr{E}_i$ is given by $\rho_i$.
\end{defn}

\begin{rem}\label{rem_local_stack}
Now we consider the local structure. A $G$-bundle $\mathscr{E}$ on $\mathscr{X}$ gives a tuple $(\mathscr{E}_0, \widehat{\mathscr{E}}_i, \widehat{\Theta}_i)$, where
\begin{align*}
    \mathscr{E}_0:=\mathscr{E} \times_{\mathscr{X}} \mathscr{X} \backslash \widetilde{D}, \quad \widehat{\mathscr{E}}_i := \mathscr{E}_i \times _{ [\Spec \, \widetilde{R}_i / \mu_{d_i}] } [ \Spec \, \widehat{\widetilde{R}}_i / \mu_{d_i} ]
\end{align*}
and
\begin{align*}
    \widehat{\Theta}_i:  \mathscr{E}_0 \times_{ \mathscr{X}\backslash \widetilde{D}} [ \Spec \, \widehat{\widetilde{K}}_i / \mu_{d_i} ] \xrightarrow{\cong} \widehat{\mathscr{E}}_i \times_{ [\Spec \, \widehat{\widetilde{R}}_i / \mu_{d_i}] } [ \Spec \, \widehat{\widetilde{K}}_i / \mu_{d_i} ]
\end{align*}
is an isomorphism for each $i$. We want to remind the reader that such a tuple $(\mathscr{E}_0, \widehat{\mathscr{E}}_i, \widehat{\Theta}_i)$ cannot determine a $G$-bundle on $\mathscr{X}$ in the higher dimensional case.

The isomorphism $\widehat{\Theta}_i$ can be equivalently regarded as an element in $G(\widehat{\widetilde{K}}_i)$ such that
\begin{align*}
    \widehat{\Theta}_i (\gamma_i \cdot w_i) = \rho_i (\gamma_i) \widehat{\Theta}_i(w_i).
\end{align*}
Denote by $G(\widehat{\widetilde{K}}_i)^{\rho_i} \subseteq G(\widehat{\widetilde{K}}_i)$ the subset
\begin{align*}
    G(\widehat{\widetilde{K}}_i)^{\rho_i} := \{ \widehat{\Theta}_i \in G(\widehat{\widetilde{K}}_i) \, | \,  \widehat{\Theta}_i (\gamma_i \cdot w_i) = \rho_i (\gamma_i) \widehat{\Theta}_i(w_i)  \}.
\end{align*}
Similarly, we define a subset $G(\widehat{\widetilde{R}}_i)^{\rho_i} \subseteq G(\widehat{\widetilde{R}}_i)$ as follows
\begin{align*}
    G(\widehat{\widetilde{R}}_i)^{\rho_i} := \{ \widehat{\Psi}_i \in G(\widehat{\widetilde{R}}_i) \, | \, \rho_i (\gamma_i) \widehat{\Psi}_i(w_i) \rho_i (\gamma_i)^{-1} = \Psi_i (\gamma_i \cdot w_i) \}.
\end{align*}

In the case of stacky curves, $\widetilde{D}$ is a set of distinct points. Isomorphisms
\begin{align*}
\widehat{\Theta}_i:  \mathscr{E}_0 \times_{ \mathscr{X}\backslash \widetilde{D}} [ \Spec \, \widehat{\widetilde{K}}_i / \mu_{d_i} ] \xrightarrow{\cong} \widehat{\mathscr{E}}_i \times_{ [\Spec \, \widehat{\widetilde{R}}_i / \mu_{d_i}] } [ \Spec \, \widehat{\widetilde{K}}_i / \mu_{d_i} ]
\end{align*}
give all information of a $G$-bundle $\mathscr{E}$, and isomorphism classes of $G$-bundles of type $\boldsymbol\rho$ on $\mathscr{X}$ are in one-to-one correspondence with the double coset
\begin{align*}
\left[ G(k[X \backslash D]) \backslash \prod_{i=1}^s G(\widehat{\widetilde{K}}_i)^{\rho_{d_i}}  / \prod_{i=1}^s G(\widehat{\widetilde{R}}_i)^{\rho_{d_i}} \right],
\end{align*}
and this is an adelic description of $G$-bundles in the case of curves \cite[\S 3]{BS15}. This result holds in higher dimensional case when the normal crossing divisor $D$ is smooth.
\end{rem}

\subsection{Logarithmic G-Higgs Bundles and Logarithmic Flat G-bundles}\label{subsect_log_G_Higgs_and_conn}

\begin{defn}
    A \emph{logarithmic $G$-Higgs bundle} on $\mathscr{X}$ is a pair $(\mathscr{E},\theta)$, where $\mathscr{E}$ is a $G$-bundle on $\mathscr{X}$ and $\theta: \mathscr{X} \rightarrow \mathscr{E}(\mathfrak{g}) \otimes \Omega_{\mathscr{X}}({\rm log} \, \widetilde{D})$ is integrable. A \emph{logarithmic flat $G$-bundle} on $\mathscr{X}$ is a pair $(\mathscr{V},\nabla)$, where $\mathscr{V}$ is a $G$-bundle $\mathscr{X}$ and $\nabla: \mathcal{O}_{\mathscr{V}} \rightarrow \mathcal{O}_{\mathscr{V}} \otimes \Omega_{\mathscr{X}}({\rm log}\, \widetilde{D} )$ is an integrable $G$-connection.
\end{defn}

Let ${\rm HIG}_{p-1}(\mathscr{X}_{\rm log}/k,G)$ be the category of nilpotent logarithmic $G$-Higgs bundles on $\mathscr{X}$ of exponent $\leq p-1$ on $\mathscr{X}$. For nilpotent logarithmic flat $G$-bundles, note first that we have a short exact sequence
\begin{align*}
    0 \longrightarrow \Omega_{\mathscr{X}} \longrightarrow \Omega_{\mathscr{X}} ({\rm log}\, \widetilde{D}) \xrightarrow{\oplus {\rm res}_{\widetilde{D}_i}} \bigoplus_i \mathcal{O}_{\widetilde{D}_i} \longrightarrow 0.
\end{align*}
Given a logarithmic flat $G$-bundle $(\mathscr{V},\nabla)$, we define
\begin{align*}
    {\rm Res}_{\widetilde{D}_i} \nabla: \mathcal{O}_{\mathscr{V}} \otimes \mathcal{O}_{\widetilde{D}_i} \rightarrow \mathcal{O}_{\mathscr{V}} \otimes \mathcal{O}_{\widetilde{D}_i}
\end{align*}
as the composite
\begin{align*}
    \mathcal{O}_{\mathscr{V}} \xrightarrow{\nabla} \mathcal{O}_{\mathscr{V}} \otimes \Omega_{\mathscr{X}}({\rm log} \, \widetilde{D}) \xrightarrow{ {\rm id} \otimes {\rm res}_{\widetilde{D}_i}  } \mathcal{O}_{\mathscr{V}} \otimes \mathcal{O}_{\widetilde{D}_i}.
\end{align*}
Let ${\rm MIC}_{p-1}(\mathscr{X}_{\rm log}/k,G)$ be the category of nilpotent logarithmic flat $G$-bundles of exponent $\leq p-1$ on $\mathscr{X}$ such that ${\rm Res}_{\widetilde{D}_i} \nabla$ is also nilpotent of exponent $\leq p-1$ for every $1 \leq i \leq s$.

\begin{rem}
Since the objects we consider here are $G$-bundles, the ``tor condition" considered in \cite[Appendix]{LSYZ19} is satisfied automatically, i.e.
\begin{align*}
\mathcal{T}or_1(\mathcal{O}_{\mathscr{E}}, (F_1)_* \mathcal{O}_{\widetilde{D}_i}) = 0, \, \mathcal{T}or_1(\mathcal{O}_{\mathscr{V}}, \mathcal{O}_{\widetilde{D}_i}) = 0,
\end{align*}
where $\mathscr{E}$ is a $G$-bundle on $\mathscr{X}'$ and $\mathscr{V}$ is a $G$-bundle on $\mathscr{X}$.
\end{rem}

\begin{lem}\label{lem_HIG_MIC_stack_0}
    Suppose that $\mathscr{X}$ is $W_2(k)$-liftable. The following categories are equivalent
    \begin{center}
    \begin{tikzcd}
    {\rm HIG}_{0}(\mathscr{X}'_{\rm log}/k, G) \arrow[d, equal] \arrow[r, equal] & {\rm MIC}_{0}(\mathscr{X}_{\rm log}/k, G) \arrow[d,equal] \\
    {\rm HIG}_0 (\mathscr{X}'/k,G) \arrow[r,equal] & {\rm MIC}_0 (\mathscr{X}/k,G) \, ,
    \end{tikzcd}
    \end{center}
    where we use $``="$ for equivalence.
\end{lem}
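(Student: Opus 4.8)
The plan is to reduce the lemma to the non-logarithmic stacky Cartier descent of Proposition~\ref{prop_cartier_des_G_stack}, which is exactly the bottom row of the diagram, by checking that at level $0$ the logarithmic structure carries no extra data, so that each logarithmic category is identified with the non-logarithmic one directly above or below it; the top equivalence then follows by composition. For the left-hand column this is immediate: a logarithmic $G$-Higgs bundle $(\mathscr{E},\theta)$ that is nilpotent of exponent $\leq 0$ has $\theta(\partial)=0$ for every local logarithmic vector field $\partial$, so $\theta$ vanishes on the dense open $\mathscr{X}'\setminus\widetilde{D}$; since $\mathscr{E}(\mathfrak{g})\otimes\Omega_{\mathscr{X}'}({\rm log}\,\widetilde{D})$ is torsion-free, $\theta=0$, and $(\mathscr{E},\theta)\mapsto\mathscr{E}$ is an equivalence ${\rm HIG}_0(\mathscr{X}'_{\rm log}/k,G)\xrightarrow{\sim}{\rm HIG}_0(\mathscr{X}'/k,G)$, compatibly with morphisms.

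The content is in the right-hand column. Let $(\mathscr{V},\nabla)$ be a logarithmic flat $G$-bundle with trivial $p$-curvature whose residues ${\rm Res}_{\widetilde{D}_i}\nabla$ are nilpotent of exponent $\leq 0$; as before this forces ${\rm Res}_{\widetilde{D}_i}\nabla=0$ for every $i$. By definition ${\rm Res}_{\widetilde{D}_i}\nabla$ is $\nabla$ followed by ${\rm id}_{\mathcal{O}_{\mathscr{V}}}\otimes{\rm res}_{\widetilde{D}_i}$, and since $\mathcal{O}_{\mathscr{V}}$ is flat over $\mathcal{O}_{\mathscr{X}}$ (\'etale-locally $\mathscr{V}\cong\mathscr{X}\times G$), tensoring the exact sequence $0\to\Omega_{\mathscr{X}}\to\Omega_{\mathscr{X}}({\rm log}\,\widetilde{D})\to\bigoplus_i\mathcal{O}_{\widetilde{D}_i}\to 0$ with $\mathcal{O}_{\mathscr{V}}$ shows that the simultaneous vanishing of all residues is equivalent to $\nabla$ factoring through $\mathcal{O}_{\mathscr{V}}\otimes\Omega_{\mathscr{X}}$, i.e. to $(\mathscr{V},\nabla)$ being a genuine (non-logarithmic) flat $G$-connection; conversely every regular flat $G$-connection is tautologically a logarithmic one with vanishing residues. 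Under this identification integrability is unchanged, and the $p$-curvature of $\nabla$ regarded as a regular connection maps injectively to its logarithmic $p$-curvature along the inclusion of ordinary into logarithmic differentials, so ``trivial $p$-curvature'' is the same condition on both sides. The identification being visibly functorial, we obtain ${\rm MIC}_0(\mathscr{X}_{\rm log}/k,G)\simeq{\rm MIC}_0(\mathscr{X}/k,G)$. (If one prefers, all of this can be pulled back to the \'etale atlas $Y$ of $\mathscr{X}\cong[Y/R]$ and argued there, the $R$-descent datum being transported automatically.)

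Composing the two vertical equivalences with Proposition~\ref{prop_cartier_des_G_stack} yields the entire square, and in particular the equivalence ${\rm HIG}_0(\mathscr{X}'_{\rm log}/k,G)\simeq{\rm MIC}_0(\mathscr{X}_{\rm log}/k,G)$ on the top row. I expect the only genuinely delicate point to be the reading of the residue hypothesis: ``nilpotent of exponent $\leq 0$'' must be interpreted as ``zero'', which is precisely what rules out logarithmic connections such as $d+\tfrac{dz}{z}$ on a line bundle (trivial $p$-curvature but not regular) and makes the right-hand identification correct; once that is pinned down, everything else is routine unwinding of the definitions from \S\ref{subsect_Higgs_conn_stack}.
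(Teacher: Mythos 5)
Your argument is correct and follows essentially the same route as the paper: you identify the two logarithmic categories at level $0$ with their non-logarithmic counterparts (trivial Higgs field on the left; vanishing residues via the residue exact sequence tensored with $\mathcal{O}_{\mathscr{V}}$, whose exactness the paper justifies by the automatic vanishing of $\mathcal{T}or_1(\mathcal{O}_{\mathscr{V}},\mathcal{O}_{\widetilde{D}_i})$, which is your flatness remark), and then conclude the bottom row from Proposition \ref{prop_cartier_des_G_stack} and the top row by composition. Your extra checks (torsion-freeness on the Higgs side, compatibility of $p$-curvature) are harmless elaborations of what the paper leaves implicit.
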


\begin{proof}
The proof is a stacky version of \cite[Lemma A.2]{LSYZ19}. For the Higgs side, the equivalence
\begin{align*}
    {\rm HIG}_{0}(\mathscr{X}'_{\rm log}/k, G) = {\rm HIG}_{0}(\mathscr{X}'/k, G)
\end{align*}
is clear because Higgs fields are trivial in this case. Now we take an object $(\mathscr{V},\nabla) \in {\rm MIC}_{0}(\mathscr{X}_{\rm log}/k, G)$. Since $\mathcal{T}or_1(\mathcal{O}_{\mathscr{V}}, \mathcal{O}_{\widetilde{D}_i}) = 0$, we have a short exact sequence
\begin{align*}
        0 \rightarrow \mathcal{O}_{\mathscr{V}} \otimes \Omega_{\mathscr{X}} \rightarrow \mathcal{O}_{\mathscr{V}} \otimes \Omega_{\mathscr{X}}({\rm log} \, \widetilde{D} ) \xrightarrow{{\rm id} \otimes {\rm res}} \bigoplus_i \mathcal{O}_{\mathscr{V}} \otimes \mathcal{O}_{\widetilde{D}_i} \longrightarrow 0.
\end{align*}
Since the residue is nilpotent of exponent $\leq 0$, it is trivial and we have $\nabla(\mathcal{O}_{\mathscr{V}}) \subseteq \mathcal{O}_{\mathscr{V}} \otimes \Omega_{\mathscr{X}}$. Therefore,
    \begin{align*}
        {\rm MIC}_{0}(\mathscr{X}_{\rm log}/k, G) = {\rm MIC}_{0}(\mathscr{X}/k, G).
    \end{align*}
    The equivalence
    \begin{align*}
        {\rm HIG}_{0}(\mathscr{X}'/k, G) = {\rm MIC}_{0}(\mathscr{X}/k, G)
    \end{align*}
    follows from Proposition \ref{prop_cartier_des_G_stack} directly. This finishes the proof of this lemma.
\end{proof}

\begin{lem}\label{lem_claimA3}
    The following diagram commutes.
    \begin{center}
    \begin{tikzcd}
    \mathcal{O}_{\mathscr{V}} \arrow[r, "\psi"] \arrow[d] & \mathcal{O}_{\mathscr{V}} \otimes F_{1}^* \Omega_{\mathscr{X}}({\rm log} \, \widetilde{D}) \arrow[r,"{\rm id} \otimes \zeta"] & \mathcal{O}_{\mathscr{V}} \otimes \Omega_{\mathscr{X}}({\rm log} \, \widetilde{D}) \arrow[d, "{\rm id} \otimes {\rm res}_{\widetilde{D}_i}"] \\
    \mathcal{O}_{\mathscr{V}} \otimes \mathcal{O}_{\widetilde{D}_i} \arrow[rr, "-{\rm Res}_{\widetilde{D}_i}\nabla"] & & \mathcal{O}_{\widetilde{D}_i}
    \end{tikzcd}
    \end{center}
\end{lem}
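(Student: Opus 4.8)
The plan is to reduce the claim to a computation in local coordinates adapted to $\widetilde D_i$ together with a Frobenius lifting. Commutativity of a diagram of $\mathcal O_{\mathscr X}$-linear maps is local and may be tested on an \'etale cover, so I would pass to the presentation $\mathscr X\cong[Y/\Gamma]$ of Remark \ref{rem_Kaw_lem}, restrict to an affine chart of the cover of $Y$ carrying the Frobenius lifting of Lemma \ref{lem_DI}, and, using the local model of Example \ref{exmp_local_stack}, take coordinates $w_1,\dots,w_d$ on it with $\widetilde D_i=\{w_i=0\}$ and lifting $F_2$ given by $w_j\mapsto w_j^p$. With these choices the map $\zeta=F_2^*/[p]$ sends $F_1^*(dw_\ell')$ to $w_\ell^{p-1}dw_\ell$ and $F_1^*\big(\tfrac{dw_i'}{w_i'}\big)$ to $\tfrac{dw_i}{w_i}$, so on every vector of the standard local basis of $F_1^*\Omega_{\mathscr X'}({\rm log}\,\widetilde D')$ other than $F_1^*\big(\tfrac{dw_i'}{w_i'}\big)$ it produces a $1$-form with vanishing residue along $\widetilde D_i$. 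Since logarithmic vector fields are stable under the restricted $p$-th power, $\nabla$ being a logarithmic connection forces its $p$-curvature $\psi$ to be a section of $\mathcal O_{\mathscr V}(\mathfrak g)\otimes F_1^*\Omega_{\mathscr X'}({\rm log}\,\widetilde D')$; hence $({\rm id}\otimes\zeta)\circ\psi$ takes values in $\mathcal O_{\mathscr V}\otimes\Omega_{\mathscr X}({\rm log}\,\widetilde D)$, and the only part of it not annihilated by ${\rm res}_{\widetilde D_i}$ is the component along $\tfrac{dw_i}{w_i}$, namely $\psi(\partial)$ with $\partial:=w_i\partial_{w_i}$.

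It then remains to identify $\psi(\partial)|_{\widetilde D_i}$ with $-{\rm Res}_{\widetilde D_i}\nabla$. Because $w_i\partial_{w_i}$ acts on $w_i^m$ by multiplication by $m$ and $m^p=m$ in $k$, one has $\partial^{[p]}=\partial$, so by Definition \ref{defn_lambda_connection} (with $\lambda=1$) $\psi(\partial)=\nabla_\partial^{\,p}-\nabla_\partial$. Writing locally $\nabla=d+A$ with $A$ the logarithmic connection form and $R_i:={\rm Res}_{\widetilde D_i}\nabla$ its residue along $\widetilde D_i$, the derivation $\partial=w_i\partial_{w_i}$ induces the zero operator on $\mathcal O_{\widetilde D_i}=\mathcal O_{\mathscr X}/(w_i)$, so $\nabla_\partial$ descends on $\mathcal O_{\mathscr V}\otimes\mathcal O_{\widetilde D_i}$ to multiplication by $R_i$, whence $\psi(\partial)|_{\widetilde D_i}=R_i^{\,p}-R_i$. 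Here the defining hypothesis of ${\rm MIC}_{p-1}(\mathscr X_{\rm log}/k,G)$ is used: $R_i$ is nilpotent of exponent $\leq p-1$, so $R_i^{\,p}=0$ by Lemma \ref{lem_linear_alg} applied to the single matrix $R_i$ in a faithful representation of $G$, and therefore $\psi(\partial)|_{\widetilde D_i}=-R_i$. Chasing a local section $e$ of $\mathcal O_{\mathscr V}$ around the square, the upper route yields $\psi(\partial)(e)|_{\widetilde D_i}=-R_i\big(e|_{\widetilde D_i}\big)$ and the lower route yields $-{\rm Res}_{\widetilde D_i}\nabla\big(e|_{\widetilde D_i}\big)=-R_i\big(e|_{\widetilde D_i}\big)$, so the square commutes.

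Two further points make the argument legitimate, and are where I would spend care. First, the composite ${\rm res}_{\widetilde D_i}\circ({\rm id}\otimes\zeta)$ appearing in the diagram is independent of the chosen chart and lifting: on overlapping charts the maps $\zeta$ differ by an exact, hence regular, $1$-form (the logarithmic analogue of Lemma \ref{lem_DI}), and ${\rm res}_{\widetilde D_i}$ kills regular forms; moreover every morphism involved is compatible with the \'etale maps $p_i:R\to Y$ through the identification $\widetilde p^{*}F_{Y,2}^{*}/[p]=F_{R,2}^{*}\widetilde p'^{*}/[p]$ of \eqref{eq_etale_mor}, so the chart-wise verification descends to $\mathscr X$. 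Second, the genuinely geometric input is the compatibility of the Frobenius lifting with the logarithmic structure: that $F_2$ may be chosen compatible with $\widetilde D_i$ so that $\zeta$ maps $F_1^*\Omega_{\mathscr X'}({\rm log}\,\widetilde D')$ into $\Omega_{\mathscr X}({\rm log}\,\widetilde D)$ and fixes $\tfrac{dw_i}{w_i}$, and that the $p$-curvature of a logarithmic connection is again logarithmic — both are already established for $G={\rm GL}_n$ in \cite[Appendix]{LSYZ19} and transfer to arbitrary $G$ through a faithful representation. Everything else is the bookkeeping of which summand of $F_1^*\Omega_{\mathscr X'}({\rm log}\,\widetilde D')$ escapes ${\rm res}_{\widetilde D_i}$, together with the single algebraic input $R_i^{\,p}=0$.
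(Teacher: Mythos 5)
Your proof is correct and follows essentially the same route as the paper: the paper also reduces to the local chart model $[\Spec(k[w_1,\dots,w_d])/\mu_{d_1}\times\cdots\times\mu_{d_s}]$ of Example \ref{exmp_local_stack} and then invokes \cite[Claim A.3]{LSYZ19}, whose content is exactly the computation you carry out (the identity ${\rm res}_{\widetilde D_i}\zeta(\psi)=R_i^{p}-R_i$ for $\partial=w_i\partial_{w_i}$ with $\partial^{[p]}=\partial$, killed down to $-R_i$ by the nilpotency of the residue required in ${\rm MIC}_{p-1}(\mathscr{X}_{\rm log}/k,G)$). Your added remarks on independence of the Frobenius lifting and on transferring the $G$-case through a faithful representation are fine supplements but do not change the argument.
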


\begin{proof}
    To prove this lemma, it is enough to work around a neighborhood of $\widetilde{x} \in \widetilde{D}$. By Theorem \ref{thm_root_stack} and Example \ref{exmp_local_stack}, we can choose a neighborhood around $\widetilde{x}$ such that it is isomorphic to a root stack in the form $\mathscr{X}_{(X,D,\boldsymbol{d})} \cong [\Spec(k[w_1,\dots,w_d]) / \mu_{d_1} \times \cdots \times \mu_{d_s}]$. Then, this lemma follows directly from \cite[Claim A.3]{LSYZ19}.
\end{proof}

\begin{thm}\label{thm_HIG_MIC_G_log_stack}
Suppose that $(\mathscr{X},\widetilde{D})$ is $W_2$-liftable. Then the categories ${\rm HIG}_{p-1}(\mathscr{X}'_{\rm log}/k , G) $ and ${\rm MIC}_{p-1}(\mathscr{X}_{\rm log}/k , G)$ are equivalent.
\end{thm}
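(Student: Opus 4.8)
The plan is to run the exponential-twisting construction of \S\ref{subsect_inv_car_G} and \S\ref{subsect_car_G} with the logarithmic cotangent sheaf $\Omega_{\mathscr X}({\rm log}\,\widetilde D)$ in place of $\Omega_{\mathscr X}$, over the \'etale presentation $\mathscr X\cong[Y/R]$ exactly as in the proof of Theorem \ref{thm_HIG_MIC_G_stack}, and to read off the residue condition from Lemma \ref{lem_claimA3}. Since $(\mathscr X,\widetilde D)$ is $W_2$-liftable, after choosing an affine cover $\{U_{Y,i}\}$ of $Y$ adapted to $\widetilde D$ one obtains, as in \cite[Appendix]{LSYZ19} (compare \cite{Sch05}), logarithmic Frobenius liftings and hence morphisms $\zeta_i\colon F_1^*\Omega_{U_i'}({\rm log}\,\widetilde D')\to\Omega_{U_i}({\rm log}\,\widetilde D)$ together with $h_{ij}\colon F_1^*\Omega_{U_{ij}'}({\rm log}\,\widetilde D')\to\mathcal O_{U_{ij}}$ satisfying the two identities of Lemma \ref{lem_DI}; compatibility of this data with the two \'etale projections $R\rightrightarrows Y$ is checked exactly as in \eqref{eq_etale_mor}, so it descends to $\mathscr X$.

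First I would construct the logarithmic inverse Cartier transform. Given $(\mathscr E,\theta)\in{\rm HIG}_{p-1}(\mathscr X'_{\rm log}/k,G)$, pull it back to $Y'$, restrict to the charts $U_i'$, and set $V_i:=F_1^*E_i$ with the logarithmic connection $\nabla_i:=\nabla_{can,i}+\zeta_i(F_1^*\theta)$ as in \eqref{eq_new_conn}, glued by $G_{ij}:=\overline{\rm exp}(h_{ij}(F_1^*\theta))$. Nilpotency of $\theta$ of exponent $\le p-1$ is precisely what makes $\overline{\rm exp}$ available (Theorem \ref{thm_sob3.4}); the cocycle identity \eqref{eq_G}, the gluing identity \eqref{eq_nabla}, the mod-$p$ verification of the identity $(\ast)$, and the integrability and nilpotency of exponent $\le p-1$ of the resulting $G$-connection $\nabla$ go through verbatim as in \S\ref{subsect_inv_car_G}. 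The new point is that $\nabla\in{\rm MIC}_{p-1}(\mathscr X_{\rm log}/k,G)$, i.e.\ that ${\rm Res}_{\widetilde D_i}\nabla$ is nilpotent of exponent $\le p-1$: since $\nabla_{can,i}$ extends to a pole-free connection, ${\rm Res}_{\widetilde D_i}\nabla$ equals ${\rm res}_{\widetilde D_i}(\zeta_i(F_1^*\theta))$, which by Lemma \ref{lem_claimA3} (with $p$-curvature $\psi=F_1^*\theta$) is, up to sign, the residue of $F_1^*\theta$ along $\widetilde D_i$; this is obtained by evaluating $\theta$ on the log vector field dual to $d w_i/w_i$ and restricting, so it lands in the same $\mathfrak u$ as $\theta$ and is therefore nilpotent of exponent $\le p-1$.

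Next I would construct the logarithmic Cartier transform in the reverse direction. Given $(\mathscr V,\nabla)\in{\rm MIC}_{p-1}(\mathscr X_{\rm log}/k,G)$ with $p$-curvature $\psi$, put $\nabla_i':=\nabla_i+\zeta_i(\psi)$ on each chart; the argument with the algebra $F_1^*(S^{<p}T_{U_i'})$ from \S\ref{subsect_car_G} shows $\nabla_i'$ has trivial $p$-curvature, and the local data glue via $J_{ij}:=\overline{\rm exp}(h_{ij}(\psi))$ to a triple $(\mathscr V',\nabla',\psi')$ with $\nabla'$ flat of trivial $p$-curvature and $\psi'$ an $F$-Higgs field horizontal for $\nabla'$. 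Here Lemma \ref{lem_claimA3} gives ${\rm Res}_{\widetilde D_i}\nabla'={\rm Res}_{\widetilde D_i}\nabla+{\rm res}_{\widetilde D_i}(\zeta_i(\psi))=0$, so $(\mathscr V',\nabla')\in{\rm MIC}_0(\mathscr X_{\rm log}/k,G)$; by Lemma \ref{lem_HIG_MIC_stack_0} (equivalently Proposition \ref{prop_cartier_des_G_stack} in its logarithmic guise) the triple descends to a logarithmic $G$-Higgs bundle $(\mathscr E,\theta)$ on $\mathscr X'$, and $\theta$ is nilpotent of exponent $\le p-1$ because $\psi'$ is. That the two functors so obtained are mutually inverse is then formal, as in the non-logarithmic case: both are assembled from the same exponential-twisting recipe together with the identities $\zeta_i-\zeta_j=dh_{ij}$ and $h_{ij}+h_{jk}=h_{ik}$.

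The main obstacle is the set-up in the first paragraph: one must be certain that the Deligne--Illusie data and the exponentiation formalism are simultaneously available on the \emph{logarithmic} and the \emph{stacky} level — that logarithmic Frobenius liftings exist on a $W_2$-lift of $(\mathscr X,\widetilde D)$, that the resulting $\zeta_i,h_{ij}$ glue over the groupoid $R\rightrightarrows Y$ (the logarithmic analogue of \eqref{eq_etale_mor}), and that $\overline{\rm exp}$ of Theorem \ref{thm_sob3.4} interacts with the residue map precisely as encoded in Lemma \ref{lem_claimA3}. Once these are in place, every remaining step reduces, chart by chart, to computations already carried out in \S\ref{subsect_inv_car_G}, \S\ref{subsect_car_G} and \cite[Appendix]{LSYZ19}.
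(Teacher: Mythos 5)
Your proposal is correct and follows essentially the same route as the paper: a logarithmic version of the exponential-twisting (inverse) Cartier transforms built over the presentation $\mathscr X\cong[Y/R]$, with the residue of the twisted connection controlled via ${\rm Res}_{\widetilde D_i}\nabla=F_1^*{\rm Res}_{\widetilde D_i}\theta$ (equivalently Lemma \ref{lem_claimA3}) in one direction and ${\rm Res}_{\widetilde D_i}\nabla'=0$ plus Cartier descent (Lemma \ref{lem_HIG_MIC_stack_0}) in the other. The only cosmetic difference is that you spell out the logarithmic Deligne--Illusie data and its compatibility over the groupoid, whereas the paper compresses this by invoking \cite[Theorem A.1]{LSYZ19} and reducing the residue check to the explicit local model $[\Spec(k[w_1,\dots,w_d])/\mu_{d_1}\times\cdots\times\mu_{d_s}]$.
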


\begin{proof}
    Similar to the proof of \cite[Theorem A.1]{LSYZ19}, based on Theorem \ref{thm_HIG_MIC_G_stack} and Lemma \ref{lem_HIG_MIC_stack_0}, it is enough to work on local charts, and thus we suppose that
    \begin{align*}
        \mathscr{X} = [\Spec(k[w_1,\dots,w_d]) / \mu_{d_1} \times \cdots \times \mu_{d_s}].
    \end{align*}
    Given $(\mathscr{E},\theta) \in {\rm HIG}_{p-1}(\mathscr{X}'_{\rm log}/k , G)$, we obtain a logarithmic flat $G$-bundle $(\mathscr{V},\nabla)$ via a logarithmic version of the inverse Cartier transform given in Theorem \ref{thm_HIG_MIC_G_stack}. For convenience, we regard $(\mathscr{E},\theta)$ (resp. $(\mathscr{V},\nabla)$) as equivariant logarithmic $G$-Higgs bundles $(E,\theta)$ (resp. logarithmic flat $G$-bundles $(V,\nabla)$) on $\Spec( k[w_1,\dots,w_d] )$. Thus, following the construction of \eqref{eq_new_conn} in the proof of Theorem \ref{thm_HIG_MIC_G}, we have
    \begin{align*}
        \nabla = \nabla_{can} + \zeta(F_1^* \theta),
    \end{align*}
    and then
    \begin{align*}
        {\rm Res}_{\widetilde{D}_i} \nabla = F_1^* {\rm Res}_{\widetilde{D}_i} \theta.
    \end{align*}
    This implies that the residues are nilpotent. Therefore, we have $(\mathscr{V},\nabla) \in {\rm MIC}_{p-1}(\mathscr{X}_{\rm log}/k , G)$.

    For the other direction, we take an object $(\mathscr{V},\nabla) \in {\rm MIC}_{p-1}(\mathscr{X}_{\rm log}/k , G)$. With the same proof as in \S\ref{subsect_car_G}, we obtain a triple $(\mathscr{V}',\nabla',\psi')$, where $(\mathscr{V}',\nabla')$ is a logarithmic flat $G$-bundle with trivial $p$-curvature and $\psi'$ is the $F$-Higgs field horizontal with respect to $\nabla'$. Then, the triple $(\mathscr{V}',\nabla',\psi')$ descents to a nilpotent logarithmic $G$-Higgs bundle $(\mathscr{E},\theta)$ on $\mathscr{X}'$, which is clear an object in the category ${\rm HIG}_{p-1}(\mathscr{X}'_{\rm log}/k , G)$.
\end{proof}

\begin{rem}\label{rem_type}
    In this remark, we discuss how type changes via the correspondence given in Theorem \ref{thm_HIG_MIC_G_log_stack}. Since type is a local property, we still work on the stack
    \begin{align*}
        \mathscr{X} = [\Spec(k[w_1,\dots,w_d]) / \mu_{d_1} \times \cdots \times \mu_{d_s}]
    \end{align*}
    as in Example \ref{exmp_local_stack} and consider equivariant objects. We fix some notations in this example. Let
    \begin{align*}
        \Gamma = \mu_{d_1} \times \cdots \times \mu_{d_s}, \quad Y = \Spec(k[w_1,\dots,w_d]).
    \end{align*}
    Let $E \cong Y' \times G$ be a trivial $G$-bundle of type $\boldsymbol\rho = \{\rho_i, 1 \leq i \leq s\}$ on $Y$. Thus, the $\Gamma$-action on $E$ is given as follows
    \begin{align*}
        (\gamma_1,\dots,\gamma_s) \cdot ((w_1,\dots,w_d),g) = ((\gamma_1 w_1,\dots,\gamma_s w_s, \dots, w_d), \rho_1(\gamma_1)\dots \rho_s(\gamma_s) g ).
    \end{align*}
    Under the inverse Cartier transform, we obtain a $(\Gamma,G)$-bundle $V=F_1^* E$ which is of type $\boldsymbol\rho^p = \{\rho_i^p, \, 1 \leq i \leq s\}$.
\end{rem}

\subsection{Stability Condition}\label{subsect_stab_cond_stack}
In this subsection, we make a further assumption on the root stack $\mathscr{X} \cong [Y / \Gamma]$ that it is projective, i.e. the coarse moduli space $X$ is projective. With respect to a given ample line bundle $H$ on $X$, the degree of a locally free sheaf $\mathscr{F}$ on is given in \cite[4.1.2]{Bor07} and denote it by $\deg \mathscr{F} := \deg_H \mathscr{F}$. Moreover, a big open substack $\mathscr{X}^{\rm big} \subseteq \mathscr{X}$ can be regarded as a $\Gamma$-invariant big open subset $Y^{\rm big} \subseteq Y$ such that $\mathscr{X}^{\rm big} \cong [Y^{\rm big}/ \Gamma]$. Then we define stability conditions of logarithmic $\lambda$-connections on $\mathscr{X}$ in the same way as Definition \ref{defn_stab_lambda}.

\begin{defn}
A logarithmic $\lambda$-connection $(\mathscr{E},\nabla)$ on $\mathscr{X}$ is \emph{$R$-semistable} (resp. \emph{$R$-stable}), if
\begin{itemize}
	\item for any maximal parabolic subgroup $P \subseteq G$;
	\item for any antidominant character $\chi: P \rightarrow \mathbb{G}_m$ trivial on the center;
    \item for any big open substack $\mathscr{X}^{\rm big} \subseteq \mathscr{X}$;
	\item for any $\nabla$-compatible reduction of structure group $\sigma: \mathscr{X}^{\rm big} \rightarrow (\mathscr{E}|_{ \mathscr{X}^{\rm big} } )/P$,
\end{itemize}
we have
\begin{align*}
    \deg L(\sigma,\chi) \geq 0 \text{ (resp. $> 0$)}.
\end{align*}
\end{defn}

We introduce the following categories
\begin{itemize}
    \item ${\rm HIG}^{s(s)}_{p-1}(\mathscr{X}_{\rm log}/k,G)$: the category of $R$-(semi)stable nilpotent logarithmic $G$-Higgs bundles of exponent $\leq p-1$ on $\mathscr{X}$;
    \item ${\rm MIC}^{(s)s}_{p-1}(\mathscr{X}_{\rm log}/k,G)$: the category of $R$-(semi)stable nilpotent logarithmic $G$-Higgs bundles of exponent $\leq p-1$ on $\mathscr{X}$.
\end{itemize}

\begin{thm}\label{thm_HIG_MIC_G_stab_stack}
Suppose that $(\mathscr{X},\widetilde{D})$ is $W_2(k)$-liftable. The categories ${\rm HIG}^{(s)s}_{p-1}(\mathscr{X}'_{\rm log}/k , G)$ and ${\rm MIC}^{(s)s}_{p-1}(\mathscr{X}_{\rm log}/k , G)$ are equivalent.
\end{thm}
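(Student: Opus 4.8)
The plan is to combine the equivalence of Theorem~\ref{thm_HIG_MIC_G_log_stack} with the stability argument of Theorem~\ref{thm_HIG_MIC_G_stab}, transplanted to the root stack through a presentation $\mathscr{X}\cong[Y/R]$. Since Theorem~\ref{thm_HIG_MIC_G_log_stack} already gives an equivalence of the underlying categories, it suffices to prove that the (logarithmic) inverse Cartier transform $C^{-1}_{\rm exp}$ and, symmetrically, the Cartier transform $C_{\rm exp}$ carry $R$-(semi)stable objects to $R$-(semi)stable objects. As in Proposition~\ref{prop_cart_des_G_stab} and Theorem~\ref{thm_HIG_MIC_G_stab}, after fixing a maximal parabolic $P\subseteq G$ this reduces to two claims: (i) there is a natural bijection between $\theta$-compatible reductions $\sigma:\mathscr{X}'\to\mathscr{E}/P$ and $\nabla$-compatible reductions $\varsigma:\mathscr{X}\to\mathscr{V}/P$, where $(\mathscr{V},\nabla)=C^{-1}_{\rm exp}(\mathscr{E},\theta)$; and (ii) for corresponding reductions and any character $\chi:P\to\mathbb{G}_m$ one has $\deg_H L(\varsigma,\chi)=p\,\deg_{H'}L(\sigma,\chi)$, with $H'$ the Frobenius twist of $H$ as in \S\ref{subsect_stab_cond}. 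As in the proof of Theorem~\ref{thm_HIG_MIC_G_stab}, it is enough to work with reductions over the whole of $\mathscr{X}$ rather than over a big open substack, since the complements of big open substacks have codimension $\ge 2$ and do not affect the degrees in (ii). Note also that the logarithmic residue condition plays no role: a $\nabla$-compatible reduction is automatically compatible with ${\rm Res}_{\widetilde{D}_i}\nabla$, and the latter does not enter the definition of $L(\varsigma,\chi)$.

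For (i), I would run the argument of Theorem~\ref{thm_HIG_MIC_G_stab} on an affine cover $\{U_{Y,i}\}$ of $Y$ together with the induced cover $\{U_{R,i}\}$ of $R$, using the compatible Deligne--Illusie data $(\zeta_{\bullet,i},h_{\bullet,ij})$ for $\bullet=Y,R$ constructed in the proof of Theorem~\ref{thm_HIG_MIC_G_stack} and satisfying the \'etale-compatibility \eqref{eq_etale_mor}. Given a $\theta$-compatible reduction $\sigma$, viewed as an $R$-equivariant reduction on $Y$, the chart-wise reductions $\varsigma_i$ produced from $\sigma_i$ via the Cartier descent underlying Proposition~\ref{prop_cart_des_G_stab} are $\nabla_{can,i}$-compatible, hence $\zeta_{Y,i}(F_1^*\theta)$-compatible (because $\sigma_i$ is $\theta_i$-compatible), hence compatible with $\nabla_i=\nabla_{can,i}+\zeta_{Y,i}(F_1^*\theta)$ as in \eqref{eq_new_conn}; and they glue over $U_{Y,ij}$ via $G_{\sigma,ij}=\overline{\exp}(h_{Y,ij}(F_1^*\theta_\sigma))$ exactly as in Theorem~\ref{thm_HIG_MIC_G_stab}. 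The identity \eqref{eq_etale_mor} is precisely what makes this whole construction commute with pullback along $p_1,p_2:R\to Y$, so the equivariant structure on $\sigma$ descends to one on $\varsigma=\{\varsigma_i\}$, i.e. to a $\nabla$-compatible reduction on $\mathscr{X}$. The reverse assignment comes from the Cartier side in the same manner, using $J_{ij}=\overline{\exp}(h_{ij}(\psi))$ and $J_{\varsigma,ij}=\overline{\exp}(h_{ij}(\psi_\varsigma))$, and the two are mutually inverse by the corresponding verification in Theorem~\ref{thm_HIG_MIC_G_stab}.

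For (ii), the key observation is that the gluing cocycles $G_{\sigma,ij}=\overline{\exp}(h_{ij}(F_1^*\theta_\sigma))$ take values in the unipotent radical $U$ of $P$, which lies in the kernel of every character $\chi:P\to\mathbb{G}_m$; hence the line bundle $L(\varsigma,\chi)$ on $\mathscr{X}$ is canonically isomorphic to $F_{\mathscr{X}/k}^*L(\sigma,\chi)$. Using the degree on the projective root stack in the sense of \cite[4.1.2]{Bor07}, the identity $H^p\cong F_{\mathscr{X}/k}^*H'$ (the stacky analogue of the one in \S\ref{subsect_stab_cond}), and the projection formula, one gets $\deg_H L(\varsigma,\chi)=p\,\deg_{H'}L(\sigma,\chi)$. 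Combining (i) and (ii), the inequality $\deg_H L(\varsigma,\chi)\ge 0$ (resp. $>0$) holds for all data if and only if $\deg_{H'}L(\sigma,\chi)\ge 0$ (resp. $>0$) holds for all data, which is exactly the desired equivalence of stability conditions; the Cartier-transform direction is symmetric.

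The step I expect to be the main obstacle is the descent in (i): verifying carefully that the entire chart-wise construction of $\varsigma$ from $\sigma$ --- the connections $\nabla_i$, the exponential-twisting cocycles $G_{\sigma,ij}$, and the reduction maps $\varsigma_i:V_{\varsigma,i}\to V_i$ --- is compatible with pullback along both projections $R\to Y$, so that $R$-equivariance is preserved in both directions. This is where one genuinely needs the compatible choice of Deligne--Illusie data and the identity \eqref{eq_etale_mor}, exactly as in the proof of Theorem~\ref{thm_HIG_MIC_G_stack}; granting that, the remaining verifications are the ones already carried out in Theorem~\ref{thm_HIG_MIC_G_stab} and Proposition~\ref{prop_cart_des_G_stab}.
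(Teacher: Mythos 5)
Your proposal is correct and follows essentially the same route as the paper: reduce to a bijection between $\theta$-compatible and $\nabla$-compatible reductions plus the degree comparison $\deg_H L(\varsigma,\chi)=p\,\deg_{H'}L(\sigma,\chi)$, and establish the bijection by running Theorem \ref{thm_HIG_MIC_G_stab} on a presentation $\mathscr{X}\cong[Y/R]$, using the compatible Deligne--Illusie data and \eqref{eq_etale_mor} from Theorem \ref{thm_HIG_MIC_G_stack} to see that the construction commutes with pullback along $p_1,p_2$ and hence preserves the equivariant structure. Your unipotent-cocycle justification of the degree identity is a slightly more explicit version of what the paper treats as immediate, but the argument is the same in substance.
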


\begin{proof}
We remind the reader that the degree of locally free sheaves on $\mathscr{X}'$ is defined by $H' := \pi^*_{X/k} H$, while the degree of locally free sheaves on $\mathscr{X}$ is given by $H$. It is easy to check that $\deg_{H'} \mathscr{F}' = p \deg_{H} \mathscr{F}$, where $\mathscr{F}$ is a locally free sheaf on $\mathscr{X}$. As discussed in Theorem \ref{thm_HIG_MIC_G_stab}, we only have to check that there is a one-to-one correspondence between $\theta$-compatible reductions of a $G$-Higgs bundle $(\mathscr{E}.\theta)$ on $\mathscr{X}'$ and $\nabla$-compatible reductions of the corresponding flat $G$-bundle $(\mathscr{V},\nabla)$ on $\mathscr{X}$ given by Theorem \ref{thm_HIG_MIC_G_log_stack}. In this proof, we start with a $\theta$-compatible reduction $\sigma$ and construct a $\nabla$-compatible reduction $\varsigma$. The other direction can be proved in a similar way and we omit the details.

    Let $(\mathscr{E},\theta) \in {\rm HIG}_{p-1}(\mathscr{X}'_{\rm log}/k , G)$ be a $G$-Higgs bundle. Under the isomorphism $\mathscr{X} \cong [Y/R]$, we regard $(\mathscr{E},\theta)$ as a tuple $(E,\theta,\tau')$, where $(E,\theta)$ is a logarithmic $G$-Higgs bundle on $Y'$ and $\tau':p_1^*(E,\theta) \cong p_2^*(E,\theta)$ is an isomorphism as in the proof of Theorem \ref{thm_HIG_MIC_G_stack}. A $\theta$-compatible reduction $\sigma: \mathscr{X}' \rightarrow \mathscr{E}/P$ can be regarded as a $\theta$-compatible reduction $\sigma: Y' \rightarrow E/P$ such that the following diagram commutes,
    \begin{center}
    \begin{tikzcd}
    p_1^* E_\sigma \arrow[r, "p_1^* \sigma"] \arrow[d, "\tau'_\sigma"] & p_1^* E \arrow[d, "\tau' "] \\
    p_2^* E_\sigma \arrow[r, "p_2^* \sigma"] & p_2^* E
    \end{tikzcd}
    \end{center}
    where $\tau'_\sigma: p_1^* E_\sigma \rightarrow p_2^* E_\sigma$ is the isomorphism induced by $\tau'$. Moreover, $p_i^* \sigma$ is compatible with the Higgs field $p_i^* \theta$. By Theorem \ref{thm_HIG_MIC_G_stab}, the triple $(E,\theta,\sigma)$ on $Y'$ corresponds to a triple $(V,\nabla,\varsigma)$, where $(V,\nabla)$ is a nilpotent logarithmic flat $G$-bundle and $\varsigma: Y \rightarrow V/P$ is a $\nabla$-compatible reduction. Similarly, $p_i^* (E,\theta,\sigma)$ corresponds to $p_i^* (V,\nabla,\varsigma)$ on $R$. The isomorphisms
    \begin{align*}
        \tau': p_1^* (E,\theta) \cong p_2^* (E,\theta), \quad \tau'_\sigma: p_1^* (E_\sigma,\theta_\sigma) \cong p_2^* (E_\sigma,\theta_\sigma)
    \end{align*}
    induce ones
    \begin{align*}
        \tau: p_1^* (V,\nabla) \cong p_2^* (V,\nabla), \quad  \tau_\sigma: p_1^*(V_\varsigma, \nabla_{\varsigma}) \cong p_2^*(V_\varsigma, \nabla_{\varsigma})
    \end{align*}
    by Theorem \ref{thm_HIG_MIC_G_stack}. Thus, we obtain a triple $(V,\nabla,\varsigma)$ together with an isomorphism $\tau$ such that the diagram
    \begin{center}
    \begin{tikzcd}
    p_1^* V_\varsigma \arrow[r, "p_1^* \varsigma"] \arrow[d, " \tau_\sigma"] & p_1^* V \arrow[d, "\tau"] \\
    p_2^* V_\varsigma \arrow[r, "p_2^* \varsigma"] & p_2^* V
    \end{tikzcd}
    \end{center}
    commutes, and then we get a nilpotent logarithmic flat $G$-bundle $(\mathscr{V},\nabla) \in {\rm MIC}_{p-1}(\mathscr{X}_{\rm log}/k , G)$ together with a $\nabla$-compatible reduction $\varsigma$ of structure group.
\end{proof}

\section{Logahoric Higgs Torsors and Logahoric Connections}\label{sect_parah}

The goal of this section is to prove a logahoric version of the nonabelian Hodge correspondence for $G$-bundles via exponential twisting. We first studied parahoric group schemes over a smooth variety $X$ (with a given reduced normal crossing divisor $D$), which is inspired by Balaji--Pandey's work \cite[Theorem 5.4]{BP22}. Then we prove a correspondence between parahoric torsors on $X$ and $G$-bundles on a root stack $\mathscr{X}$ (Proposition \ref{prop_parah_equiv_G}), which also preserves the stability condition (Proposition \ref{prop_parah_equiv_G_stab}). Based on the above correspondence and results given on root stacks (Theorem \ref{thm_HIG_MIC_G_log_stack} and \ref{thm_HIG_MIC_G_stab_stack}), we give the proof of the main result (Theorem \ref{thm_log_NAHC}).

\subsection{Parahoric Group Scheme}\label{subsect_parah}
We fix a maximal torus $T \subseteq G$ and denote by $\mathcal{R}$ the root system. Define $X(T):={\rm Hom}(T,\mathbb{G}_m)$ and $Y(T):={\rm Hom}(\mathbb{G}_m,T)$. Denote by
\begin{align*}
\langle \cdot, \cdot \rangle : Y(T) \times X(T) \rightarrow \mathbb{Z}
\end{align*}
the canonical pairing, which can be naturally extended to $\mathbb{Q}$. A \emph{weight} $\alpha$ is an element in $Y(T) \otimes_{\mathbb{Z}} \mathbb{Q}$, and let $d$ be the least common multiple of the denominators. Clearly, $d \alpha \in Y(T)$. For convenience, we say that $d$ is the denominator of $\alpha$. The weight $\alpha$ is called \emph{tame} if $d$ is coprime to $p$ \cite[\S 2]{LS21}. Given a weight $\alpha$ and a root $r \in \mathcal{R}$, the pairing $\langle \alpha , r\rangle$ is a rational number, and sometimes we use the notation $r(\alpha)$. All weights considered in this paper are assumed to be tame and satisfy the condition $r(\alpha) \leq 1$ for any $r \in \mathcal{R}$.

Let $X$ be a smooth variety over $k$ with a given reduced normal crossing divisor $D=\sum\limits_{i=1}^s D_i$, where $D_i$ are irreducible components for $1 \leq i \leq s$. We equip each $D_i$ with a tame weight $\alpha_i$, and denote by $\boldsymbol\alpha$ the collection of these weights. Furthermore, denote by $\boldsymbol{d} = \{d_i, 1 \leq i \leq s\}$ the collection of the corresponding denominators. Recall that $\eta_i$ is the generic point of $D_i$ and $R_i=\mathcal{O}_{X,\eta_i}$ with fraction field $K_i$. Denote by $\widehat{R}_i$ and (resp. $\widehat{K}_i$) the completion, which is regarded as the formal neighbourhood of the generic point $\eta_i$. Let $G_{\alpha_i}(\widehat{R}_i)$ be the parahoric group associated to a given weight $\alpha_i$
\begin{align*}
    G_{\alpha_i}(\widehat{R}_i) := \langle T(\widehat{R}_i), U_r (z_i^{m_r(\alpha_i)}\widehat{R}_i), r \in \mathcal{R} \rangle,
\end{align*}
where $U_r$ is the unipotent group associated to $r \in \mathcal{R}$, $m_r(\alpha_i) := \lceil -r(\alpha_i) \rceil$ is an integer and $\lceil \, \cdot \, \rceil$ is the ceiling function. We denote by $\mathcal{G}_{\alpha_i}$ the corresponding parahoric group scheme on $\Spec \, \widehat{R}_i$ with Lie algebra $\mathfrak{g}_{\alpha_i}$ \cite[\S 2 and \S 3]{BS15}. Now we give the definition of parahoric group schemes on higher dimensional varieties.

\begin{defn}\label{defn_parah_grp_sch}
    A smooth group scheme $\mathcal{G}$ on $X$ is \emph{parahoric} if
    \begin{align*}
        \mathcal{G} \times_X X\backslash D \cong G \times X\backslash D
    \end{align*}
    and
    \begin{align*}
        \mathcal{G} \times_X \Spec\, \widehat{R}_i \cong \mathcal{G}_{\alpha_i}
    \end{align*}
    for some tame weights $\alpha_i$ for $1 \leq i \leq s$.
\end{defn}

The idea of the above definition comes from a recent work by Balaji and Pandey \cite{BP22}, especially \cite[Theorem 5.4]{BP22}, where the authors constructed parahoric group schemes on higher dimensional varieties for simply connected and almost simple algebraic group $G$. In the following, we will show that parahoric group schemes in the above sense exist.

Let $\mathscr{X} := \mathscr{X}_{(X,D ,\boldsymbol{d})}$ be a root stack with coarse moduli space $X$ studied in \S\ref{subsect_G_stack}. Let $\mathscr{F}$ be a $G$-bundle on $\mathscr{X}$. Consider the stack of automorphisms ${\rm Aut}_{\mathscr{X}}(\mathscr{F},\mathscr{F})$. We define a functor
\begin{align*}
   (\pi_* {\rm Aut}_{\mathscr{X}}(\mathscr{F},\mathscr{F}) ) (T) := {\rm Hom}_{\mathscr{X}} (T \times_X \mathscr{X},  {\rm Aut}_{\mathscr{X}}(\mathscr{F},\mathscr{F}))
\end{align*}
for every $X$-scheme $T$.

\begin{lem}\label{lem_parah_grp}
Let $\mathscr{F}$ be a $G$-bundle of type $\boldsymbol\rho$ on $\mathscr{X}$ such that $\mathscr{F}|_{X \backslash D}$ is a trivial $G$-bundle. The functor $(\pi_* {\rm Aut}_{\mathscr{X}}(\mathscr{F},\mathscr{F}) )$ is representable by a parahoric group scheme $\mathcal{G}$ on $X$ such that $\mathcal{G} \times_X \Spec\, \widehat{R}_i \cong \mathcal{G}_{\alpha_i}$ for $1 \leq i \leq s$, where the tame weight $\alpha_i$ is determined by $\rho_i$.
\end{lem}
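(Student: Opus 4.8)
The plan is to verify the two defining conditions of Definition \ref{defn_parah_grp_sch} for the functor $\mathcal{G} := \pi_* {\rm Aut}_{\mathscr{X}}(\mathscr{F},\mathscr{F})$, working locally on $X$ and using the local picture of the root stack recorded in Example \ref{exmp_local_stack} and the paragraph preceding it. Representability is an étale-local question on $X$, so I would first check it away from $D$ and then in a formal (or étale) neighbourhood of each generic point $\eta_i$. Over $X \backslash D$ the map $\pi$ is an isomorphism, so $\pi_* {\rm Aut}_{\mathscr{X}}(\mathscr{F},\mathscr{F})|_{X\backslash D} = {\rm Aut}_{X\backslash D}(\mathscr{F}|_{X\backslash D}, \mathscr{F}|_{X\backslash D})$, which by the triviality hypothesis on $\mathscr{F}|_{X\backslash D}$ is the constant group scheme $G \times (X\backslash D)$. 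This gives the first isomorphism $\mathcal{G} \times_X X\backslash D \cong G \times X\backslash D$ and in particular representability over the open part.

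Next I would treat the formal neighbourhood of $\eta_i$. By the discussion before the definition of type, $\mathscr{X} \times_X \Spec \widehat{R}_i \cong [\Spec \widehat{\widetilde{R}}_i / \mu_{d_i}]$, and the restriction $\widehat{\mathscr{F}}_i$ is a $(\mu_{d_i},G)$-bundle on $\Spec \widehat{\widetilde{R}}_i$ whose $\mu_{d_i}$-action is given by the representation $\rho_i : \mu_{d_i} \to T$ (this is exactly what ``type $\boldsymbol\rho$'' encodes). Computing the pushforward of the automorphism sheaf amounts to taking $\mu_{d_i}$-invariant automorphisms of $\widehat{\mathscr{F}}_i$; concretely, after trivialising, one is looking at $g \in G(\widehat{\widetilde{R}}_i)$ satisfying the equivariance condition $\rho_i(\gamma)\, g(w_i)\, \rho_i(\gamma)^{-1} = g(\gamma \cdot w_i)$, i.e. the set $G(\widehat{\widetilde{R}}_i)^{\rho_i}$ of Remark \ref{rem_local_stack}. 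The key computation is to identify this $\mu_{d_i}$-fixed-point group scheme with the parahoric group scheme $\mathcal{G}_{\alpha_i}$ on $\Spec \widehat{R}_i$, where the weight $\alpha_i$ is the one attached to $\rho_i$ (namely $\alpha_i = \tfrac{1}{d_i}\cdot(\text{the cocharacter }\rho_i\text{ lifts to}))$. This is a root-space-by-root-space check: decomposing $\mathfrak{g} = \mathfrak{t} \oplus \bigoplus_r \mathfrak{g}_r$, the $\mu_{d_i}$-action twists the $r$-root line by the character $\gamma \mapsto \gamma^{\langle d_i\alpha_i, r\rangle}$, while $w_i$ transforms with weight one, so an invariant section of $U_r$ must be divisible by $w_i^{-\langle d_i \alpha_i,r\rangle}$ in the $w_i$-expansion; descending along $z_i = w_i^{d_i}$ and using $m_r(\alpha_i) = \lceil -r(\alpha_i)\rceil$, this reproduces precisely the generators $U_r(z_i^{m_r(\alpha_i)}\widehat{R}_i)$ and $T(\widehat{R}_i)$ defining $G_{\alpha_i}(\widehat{R}_i)$. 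Representability over $\Spec \widehat{R}_i$ then follows since $\mathcal{G}_{\alpha_i}$ is a known smooth affine group scheme (Bruhat--Tits / Balaji--Seshadri \cite{BS15}), and smoothness over all of $X$ is local and already known on the open part and at the $\eta_i$.

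The final point is a descent/gluing step: having identified $\mathcal{G}$ with $G\times(X\backslash D)$ over the open part and with $\mathcal{G}_{\alpha_i}$ over each $\Spec \widehat{R}_i$, I need these local models to be compatible and to actually glue to a single smooth affine group scheme on $X$. Here I would invoke the Beauville--Laszlo type gluing (as used by Balaji--Pandey \cite[Theorem 5.4]{BP22}, which is cited as the inspiration for the definition): a quasi-coherent sheaf of algebras on $X$ is determined by its restriction to $X\backslash D_i$, to $\Spec\widehat{R}_i$, and the identification over $\Spec\widehat{K}_i$, and since $\pi_*\mathscr{A}ut_{\mathscr{X}}(\mathscr{F},\mathscr{F})$ is already globally defined as a functor on $X$-schemes, one only needs to see it is represented by the (affine, hence glueable) algebra obtained this way. \textbf{The main obstacle} I anticipate is the root-space computation identifying $G(\widehat{\widetilde{R}}_i)^{\rho_i}$ with $G_{\alpha_i}(\widehat{R}_i)$ correctly — in particular keeping track of the ceiling function $m_r(\alpha_i) = \lceil -r(\alpha_i)\rceil$ versus the naive divisibility bound, and making sure the torus part and the normalisation of $\alpha_i$ (as $\tfrac1{d_i}$ times the $\mu_{d_i}$-weight datum) are consistent with the conventions of \S\ref{subsect_parah}; the gluing and smoothness are comparatively formal once this local identification is in hand.
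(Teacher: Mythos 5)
Your verification over $X\setminus D$ and your local computation at each $\eta_i$ are fine in substance: identifying $\pi_*{\rm Aut}$ over the formal disc with the fixed points $G(\widehat{\widetilde{R}}_i)^{\rho_i}$ and then matching this, root space by root space, with $G_{\alpha_i}(\widehat{R}_i)$ is exactly the content of \cite[Lemma 2.2.8, Theorem 2.3.1]{BS15}, which the paper simply cites. The genuine gap is in your local-to-global step. Beauville--Laszlo gluing of $X\setminus D$, the formal discs $\Spec\widehat{R}_i$ at the \emph{generic points} of the $D_i$, and the overlaps $\Spec\widehat{K}_i$ only controls $X$ in codimension one: these pieces do not cover $X$ (they miss all non-generic points of $D$, in particular the crossing loci $D_i\cap D_j$), so at best you produce a smooth affine group scheme on a big open subset $X^{\rm big}$ with ${\rm codim}(X\setminus X^{\rm big})\geq 2$, not on $X$ itself. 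Definition \ref{defn_parah_grp_sch} requires a smooth group scheme on all of $X$; the two displayed conditions are codimension-one conditions, but the existence and smoothness of $\mathcal{G}$ across the deeper strata of $D$ is precisely the nontrivial point in higher dimensions (this is the issue Remark \ref{rem_local_stack} and Remark \ref{rem_trans_func} flag: in the higher-dimensional case such tuples determine objects only on a big open subset). Your remark that ``smoothness over all of $X$ is local and already known on the open part and at the $\eta_i$'' is therefore not sufficient, since no statement is made at the remaining points.

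The paper sidesteps this entirely by never gluing: it writes $\mathscr{X}\cong[Y/\Gamma]$ via the Kawamata covering (Remark \ref{rem_Kaw_lem}), regards $\mathscr{F}$ as a $(\Gamma,G)$-bundle on $Y$, and obtains representability of $\pi_*{\rm Aut}_{\mathscr{X}}(\mathscr{F},\mathscr{F})=(\pi_*{\rm Aut}_Y(\mathscr{F},\mathscr{F}))^{\Gamma}$ globally over all of $X$ at once, as $\Gamma$-invariants of a Weil restriction along the finite flat cover $Y\to X$ (citing \cite[\S 7.6]{BLR90}, \cite[\S 4]{BS15}, \cite{Dam24}); only afterwards does it check the two parahoric conditions, which are local at the $\eta_i$ and over $X\setminus D$. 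To repair your argument you would either need to adopt this global invariant-pushforward construction, or supply an extension argument across codimension $\geq 2$ in the spirit of \cite[Theorem 5.4]{BP22}, which is a substantial additional input rather than a formal gluing.
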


\begin{proof}
We prove this lemma under the isomorphism $\mathscr{X} \cong [Y / \Gamma]$ and regard $\mathscr{F}$ as a $(\Gamma,G)$-bundle on $Y$ (Remark \ref{rem_Kaw_lem}). Thus the functor $\pi_* {\rm Aut}_{\mathscr{X}}(\mathscr{F},\mathscr{F})$ is equivalent to $(\pi_* {\rm Aut}_{Y}(\mathscr{F},\mathscr{F}))^{\Gamma}$. Following \cite[\S 7.6]{BLR90} (see also \cite[\S 4]{BS15} and \cite{Dam24}), the functor $(\pi_* {\rm Aut}_{Y}(\mathscr{F},\mathscr{F}))^{\Gamma}$ is representable by a smooth group scheme $\mathcal{G}$ on $X$.

Now we will show that the group scheme $\mathcal{G}$ is parahoric. Note that ``parahoric" is a local condition, and thus it is enough to work on $[\Spec \, \widetilde{R}_i / \mu_{d_i}]$ for $1 \leq i \leq s$ (see notations in \S\ref{subsect_G_stack}). Given a representation $\rho_i: \mu_{d_i} \rightarrow T$, we can associate it with a tame weight $\alpha_i$ \cite[Lemma 2.2.8]{BS15}. Then, by \cite[Theorem 2.3.1]{BS15}, we have
\begin{align*}
    \mathcal{G} \times_{X} \Spec \, \widehat{R}_i =  \pi_* {\rm Aut}_{ \Spec \, \widehat{ \widetilde{R}}_i  } (\mathscr{F}|_{\Spec \, \widehat{ \widetilde{R}}_i  } , \mathscr{F}|_{\Spec \, \widehat{ \widetilde{R}}_i } )^{\Gamma} \cong \mathcal{G}_{\alpha_i}.
\end{align*}
Therefore, the group scheme $\mathcal{G}$ on $X$ is parahoric.
\end{proof}

\begin{notn}\label{notn_parah_grp}
Lemma \ref{lem_parah_grp} gives a way to construct parahoric group schemes over higher dimensional varieties, which depends on a $G$-bundle on the root stack. Thus, it is meaningful to use the notation $\mathcal{G}_{\mathscr{F},\boldsymbol\alpha}$ for the parahoric group scheme constructed in the lemma, where $\mathscr{F}$ is the given $G$-bundle on $\mathscr{X}$ and $\boldsymbol\alpha$ is the collection of tame weights determined by the type of $\mathscr{F}$. However, if there is no ambiguity, we still use the notation $\mathcal{G}_{\boldsymbol\alpha}:=\mathcal{G}_{\mathscr{F},\boldsymbol\alpha}$ for simplicity.
\end{notn}

\begin{rem}\label{rem_reduc_corr}
Balaji and Pandey worked on simply connected and almost simple algebraic group and constructed parahoric group schemes on smooth projective varieties (with reduced normal crossing divisors) based on the existence of a Lie algebra bundle \cite[Theorem 5.4]{BP22}. In our setup, Lemma \ref{lem_parah_grp} provides another approach to construct parahoric group schemes when $G$ is reductive. However, we do not know whether all such parahoric group schemes (Definition \ref{defn_parah_grp_sch}) can be constructed in this way. Nevertheless, if the reduced normal crossing divisor is smooth, all parahoric group schemes (Definition \ref{defn_parah_grp_sch}) can be constructed in this way because this case is exactly the same as that on curves and we can use transition functions to give the construction (see \cite[\S 3]{BS15} or Remark \ref{rem_local_stack}).
\end{rem}

\subsection{Parahoric Torsors on $X$ vs. Principal Bundles on $\mathscr{X}$}\label{subsect_parah_equiv}

We follow the same setup as in \S\ref{subsect_parah}. Let $\mathscr{F}$ be a $G$-bundle of type $\boldsymbol\rho$ on $\mathscr{X}$ such that $\mathscr{F}|_{X \backslash D}$ is trivial, and denote by $\mathcal{G}_{\boldsymbol\alpha}$ the corresponding parahoric group scheme given by Lemma \ref{lem_parah_grp}.

\begin{prop}\label{prop_parah_equiv_G}
    The category ${\rm Bun}(X, \mathcal{G}_{\boldsymbol\alpha})$ of parahoric $\mathcal{G}_{\boldsymbol\alpha}$-torsors on $X$ is equivalent to the category ${\rm Bun}(\mathscr{X},G,\boldsymbol\rho)$ of $G$-bundles of type $\boldsymbol\rho$ on $\mathscr{X}$.
\end{prop}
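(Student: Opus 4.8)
The plan is to go back and forth through the root-stack picture $\mathscr{X} \cong [Y/\Gamma]$ supplied by Remark \ref{rem_Kaw_lem}, exactly as in Balaji--Seshadri \cite[\S 3]{BS15} on curves, but organizing the gluing data so that the argument works in arbitrary dimension. A $G$-bundle $\mathscr{F}$ of type $\boldsymbol\rho$ on $\mathscr{X}$ is a $(\Gamma,G)$-bundle on $Y$; its sheaf of $\Gamma$-equivariant automorphisms pushes forward to the parahoric group scheme $\mathcal{G}_{\boldsymbol\alpha}$ on $X$ by Lemma \ref{lem_parah_grp}. The functor in one direction is: given a $G$-bundle $\mathscr{E}$ of type $\boldsymbol\rho$ on $\mathscr{X}$, form the $\Gamma$-equivariant sheaf $\mathcal{I}som_Y(\mathscr{F},\mathscr{E})$ of $G$-bundle isomorphisms; its push-forward of invariants $\pi_*\big(\mathcal{I}som_Y(\mathscr{F},\mathscr{E})\big)^\Gamma$ is a right torsor under $\pi_*({\rm Aut}_Y(\mathscr{F}))^\Gamma = \mathcal{G}_{\boldsymbol\alpha}$, hence a parahoric $\mathcal{G}_{\boldsymbol\alpha}$-torsor on $X$. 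The functor in the other direction sends a $\mathcal{G}_{\boldsymbol\alpha}$-torsor $\mathcal{E}$ to the $(\Gamma,G)$-bundle $\mathcal{E} \times^{\mathcal{G}_{\boldsymbol\alpha}} \mathscr{F}$ obtained by contracting over the parahoric group scheme with the "universal" bundle $\mathscr{F}$ pulled back to $Y$; equivariance of $\mathscr{F}$ endows the result with a $\Gamma$-structure, so it descends to a $G$-bundle on $\mathscr{X}$, and one checks its type is again $\boldsymbol\rho$ since contracting with $\mathcal{G}_{\boldsymbol\alpha}$ does not change the local $\mu_{d_i}$-action, which is pinned by $\rho_i$.

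The key steps, in order: (1) show both constructions are well-defined functors, the only subtlety being that $\pi_*$ is exact by tameness of $\mathscr{X}$ (cf. \S\ref{subsect_Higgs_conn_stack}), so push-forward of torsors stays a torsor and the contraction product is representable; (2) over $X\backslash D$ everything is trivial and the two functors are mutually inverse by the elementary fact that $\mathscr{F}|_{X\backslash D}$ is a trivial $G$-bundle, so the correspondence reduces to $G$-bundles $\leftrightarrow$ $G$-bundles; (3) over each formal punctured neighborhood $\Spec\widehat R_i$, invoke \cite[Theorem 2.3.1]{BS15}, which is precisely the statement that $(\mu_{d_i},G)$-bundles of type $\rho_i$ on $\Spec\widehat{\widetilde R}_i$ correspond to $\mathcal{G}_{\alpha_i}$-torsors on $\Spec\widehat R_i$, and that the relevant groups of equivariant sections match; (4) glue: a $G$-bundle on $\mathscr{X}$ is determined by its restriction to $\mathscr{X}\backslash\widetilde D$ together with the formal-neighborhood data and the patching isomorphisms over the overlaps $\Spec\widehat{\widetilde K}_i$, and likewise a $\mathcal{G}_{\boldsymbol\alpha}$-torsor on $X$ by a Beauville--Laszlo / faithfully-flat-descent gluing of its restriction to $X\backslash D$ and the formal local torsors; the two functors respect all of this data, so they are quasi-inverse.

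The main obstacle is step (4) in dimension $\geq 2$: unlike the curve case, a tuple $(\mathscr{E}_0,\widehat{\mathscr{E}}_i,\widehat\Theta_i)$ consisting of the restriction to $\mathscr{X}\backslash\widetilde D$, the formal data along each $\widetilde D_i$, and the patching over $\Spec\widehat{\widetilde K}_i$ does \emph{not} by itself determine a $G$-bundle on $\mathscr{X}$ (as Remark \ref{rem_local_stack} explicitly warns), because the divisor components meet and one must carry around the bundle on the generic-point neighborhoods of all intersection strata as well. The honest way around this is \emph{not} to reconstruct bundles from adelic data but to work functorially throughout: define the two functors directly by the push-forward-of-invariants and the contraction-product constructions above, which make sense globally on all of $Y$ (not just over formal neighborhoods), and then verify they are mutually quasi-inverse by checking this étale-locally on $X$ — over $X\backslash D$ by triviality and over a neighborhood of each generic point $\eta_i$ by \cite[Theorem 2.3.1]{BS15}. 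Since "being an isomorphism of torsors (resp. of bundles)" is étale-local on the base, and $X\backslash D$ together with the $\Spec\widehat R_i$ (or rather étale neighborhoods of the $\eta_i$) cover $X$ in codimension one, and both sides are reflexive/satisfy $S_2$ so are determined in codimension one — this last point using that $\mathcal{G}_{\boldsymbol\alpha}$-torsors and $G$-bundles on smooth $\mathscr{X}$ extend uniquely across codimension $\geq 2$ — the natural transformations one writes down are isomorphisms globally. The technical heart is thus a codimension-one reduction plus the one-dimensional Balaji--Seshadri theorem, with the higher-dimensional input entering only through exactness of $\pi_*$ and the $S_2$/reflexivity extension across the singular locus of $D$.
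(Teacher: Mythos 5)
Your two functors are literally the ones the paper uses (it writes them as $\beta_{\mathscr{F}}\colon \mathscr{F}'\mapsto \pi_*{\rm Aut}_{\mathscr{X}}(\mathscr{F},\mathscr{F}')$ and $\eta_{\mathscr{F}}\colon \mathcal{E}\mapsto \pi^*\mathcal{E}\times_{\pi^*\mathcal{G}_{\boldsymbol\alpha}}\mathscr{F}$), but the paper does not attempt your local verification: after passing to $\mathscr{X}\cong[Y/\Gamma]$ it quotes \cite[Theorem 11]{Dam24}, which is exactly the higher-dimensional equivariant statement. Your proposal tries to replace that citation by a check over $X\setminus D$ and over the formal discs at the generic points $\eta_i$ (via \cite[Theorem 2.3.1]{BS15}), followed by a ``codimension-one plus $S_2$/reflexivity'' reduction, and this is where there is a genuine gap. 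The substantive point is not that the natural transformations are isomorphisms once everything is a torsor --- a morphism of $\mathcal{G}_{\boldsymbol\alpha}$-torsors (resp.\ of $G$-bundles) is automatically an isomorphism --- but that $\pi_*\bigl(\mathcal{I}som_Y(\mathscr{F},\mathscr{F}')\bigr)^{\Gamma}$ \emph{is} a $\mathcal{G}_{\boldsymbol\alpha}$-torsor, i.e.\ that étale-locally on $X$ there exists a $\Gamma$-equivariant isomorphism $\mathscr{F}\cong\mathscr{F}'$ (and, dually, that $\eta_{\mathscr{F}}(\mathcal{E})$ is a $G$-bundle of type $\boldsymbol\rho$). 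This local triviality has to be established at \emph{every} point of $X$, in particular at points where several components $D_i$ meet, where the local model is $[\Spec k[w_1,\dots,w_d]/\mu_{d_1}\times\cdots\times\mu_{d_s}]$ and the one-dimensional Balaji--Seshadri analysis of $(\mu_{d_i},G)$-bundles over a DVR does not apply. Existence of local sections (equivalently smoothness and surjectivity of the would-be torsor over $X$) is not a property you can propagate from a big open subset: knowing the functor value is a torsor in codimension one and that its coordinate algebra is $S_2$ gives you full faithfulness of restriction to $X^{\rm big}$, not sections or smoothness over the deep strata. So the step you treat as a formality in (1) is precisely where all the content sits, and it is what the paper imports from \cite{Dam24}.

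Two smaller points. First, the auxiliary claim that $\mathcal{G}_{\boldsymbol\alpha}$-torsors and $G$-bundles ``extend uniquely across codimension $\geq 2$'' is false as an existence statement in dimension $\geq 3$ (already rank-two vector bundles on punctured affine $3$-space need not extend as bundles); what is true is uniqueness of extensions and extension of morphisms (Weil/Hartogs-type statements), and that weaker fact does not rescue the reduction. Second, your reading of Remark \ref{rem_local_stack} is correct --- the adelic tuple does not determine the bundle in dimension $\geq 2$ --- but the fix is not a codimension-one argument; it is a genuinely local equivariant statement at the crossing strata (two $(\Gamma,G)$-bundles with the same local type are locally equivariantly isomorphic over $X$), which you would need to prove directly (using the tame local structure as in Example \ref{exmp_local_stack}) or cite, as the paper does.
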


\begin{proof}
    For convenience, we regard $\mathscr{X}$ as a global quotient $[Y/ \Gamma]$, and then the proposition follows directly from \cite[Theorem 11]{Dam24}. The equivalence is given by
    \begin{equation*}
    \begin{split}
    \beta_{\mathscr{F}}: & {\rm Bun}(\mathscr{X},G,\boldsymbol\rho) \rightarrow {\rm Bun}(X, \mathcal{G}_{\boldsymbol\alpha})\\
    &\mathscr{F}^\prime\mapsto \pi_* {\rm Aut}_{\mathscr{X}}(\mathscr{F},\mathscr{F}^\prime)
    \end{split}
    \end{equation*}
     and
     \begin{equation*}
    \begin{split}
    \eta_{\mathscr{F}}:&{\rm Bun}(X, \mathcal{G}_{\boldsymbol\alpha}) \rightarrow {\rm Bun}(\mathscr{X},G,\boldsymbol\rho) \\
    &\mathscr{E}\mapsto \pi^*\mathscr{E}\times_{\pi^*\mathcal{G}_{\boldsymbol\alpha}}\mathscr{F}.
    \end{split}
    \end{equation*}
\end{proof}

Similar to the local discussion for $G$-bundles on Deligne--Mumford stacks in Remark \ref{rem_local_stack}, we consider the local picture of parahoric $\mathcal{G}_{\boldsymbol\alpha}$-torsors. Given a parahoric $\mathcal{G}_{\boldsymbol\alpha}$-torsor $\mathcal{E}$, we associate it with a tuple $(\mathcal{E}_0,\widehat{\mathcal{E}}_i,\widehat{\Theta}'_i)$, where
\begin{align*}
    \mathcal{E}_0:= \mathcal{E} \times_X X \backslash D, \quad \widehat{\mathcal{E}}_i := \mathcal{E} \times_X \Spec \, \widehat{R}_i
\end{align*}
and
\begin{align*}
    \widehat{\Theta}'_i: \mathcal{E}_0 \times_{X \backslash D} \Spec \, \widehat{K}_i \rightarrow \widehat{\mathcal{E}}_i \times_{\Spec \, \widehat{R}_i} \Spec \, \widehat{K}_i
\end{align*}
is an isomorphism for each $i$. In this case, the isomorphism $\widehat{\Theta}'_i$ can be regarded as an element in $G(\widehat{K}_i)$.

\begin{rem}\label{rem_trans_func}
In this remark, we briefly explain Proposition \ref{prop_parah_equiv_G} in the viewpoint of changing transition functions. We first recall some notations. Let $R_i$ be the discrete valuation rings given by the generic point $\eta_i$ of $D_i$ with fraction field $K_i$. We have
\begin{align*}
\mathscr{X} \times_X \Spec \, R_i \cong [\Spec \, \widetilde{R}_i / \mu_{d_i}].
\end{align*}

Let $\mathscr{E}$ be a $G$-bundle of type $\boldsymbol\rho$ on $\mathscr{X}$ and denote by $\mathcal{E}$ the corresponding parahoric $\mathcal{G}_{\boldsymbol\alpha}$-torsor on $X$. Denote by $(\mathscr{E}_0, \widehat{\mathscr{E}}_i, \widehat{\Theta}_i)$ (resp. $(\mathcal{E}_0, \widehat{\mathcal{E}}_i, \widehat{\Theta}'_i)$) the tuple associated to $\mathscr{E}$ (resp. $\mathcal{E}$). We will explain the relation between $\widehat{\Theta}_i$ and $\widehat{\Theta}'_i$ as follows. Recall that
\begin{align*}
    \widehat{\Theta}_i (\gamma_i \cdot w_i) = \rho_i(\gamma_i) \widehat{\Theta}_i(w_i).
\end{align*}
By \cite[(2.2.9.4)]{BS15}, we can choose an element $\Delta_i \in T(\widehat{\widetilde{R}}_i)$ such that
\begin{align*}
    \Delta_i (\gamma_i \cdot w_i)  = \rho_i(\gamma_i)\Delta_i(w_i),
\end{align*}
and define new isomorphisms
\begin{align*}
    \Delta^{-1}_i \widehat{\Theta}_i, \ 1 \leq i \leq s.
\end{align*}
It is easy to check that $\Delta^{-1}_i \widehat{\Theta}_i$ is $\mu_{d_i}$-invariant, i.e.
\begin{align*}
    (\Delta^{-1}_i \widehat{\Theta}_i) (\gamma_i \cdot w_i) = (\Delta^{-1}_i \widehat{\Theta}_i)(w_i).
\end{align*}
Thus, $\Delta^{-1}_i \widehat{\Theta}_i$ descents to a well-defined element in $G(\widehat{K}_i)$ under the substitution $z_i = w_i^{d_i}$, which is exactly $\widehat{\Theta}'_i$ (up to conjugation). Moreover, there is a one-to-one correspondence between elements in $G(\widehat{\widetilde{K}}_i)^{\rho_i}$ and elements in $G(\widehat{K}_i)$ in the above sense.

At the end of this remark, we briefly explain the relation between tuples $(\mathcal{E}_0, \widehat{\mathcal{E}}_i, \widehat{\Theta}'_i)$ (resp. $(\mathscr{E}_0, \widehat{\mathscr{E}}_i, \widehat{\Theta}_i)$) and parahoric $\mathcal{G}_{\boldsymbol\alpha}$-torsors $\mathcal{E}$ on $X$ (resp. $G$-bundles on $\mathscr{X}$). We take parahoric torsors as an example and the approach given here is also included in \cite[first and second paragraph of the proof of Theorem 5.4]{BP22}. We start with a tuple $(\mathcal{E}_0, \widehat{\mathcal{E}}_i, \widehat{\Theta}'_i)$. By \cite[\S 6]{BLR90}, the tuple induces one $(\mathcal{E}_0, \mathcal{E}_i, \Theta'_i)$, where $\mathcal{E}_i$ is a scheme of finite type on $\Spec \, R_i$ and
\begin{align*}
    \Theta'_i : \mathcal{E}_0 \times_{X \backslash D} \Spec \, K_i \rightarrow \mathcal{E}_i \times_{\Spec \, R_i} \Spec \, K_i
\end{align*}
is an isomorphism. Therefore, the tuple $(\mathcal{E}_0, \mathcal{E}_i, \Theta'_i)$ gives a scheme $\mathcal{E}^{\rm big}$ over an open subset $X^{\rm big}$ of $X$ such that $\mathcal{E} \times_X X^{\rm big} \cong \mathcal{E}^{\rm big}$ and ${\rm codim} (X \backslash X^{\rm big}) \geq 2$. Moreover, $\mathcal{E}^{\rm big}$ is a parahoric $\mathcal{G}^{\rm big}_{\boldsymbol\alpha}$-torsor, where $\mathcal{G}^{\rm big}_{\boldsymbol\alpha}:= \mathcal{G}_{\boldsymbol\alpha} \times_X X^{\rm big}$.
\end{rem}

\begin{rem}\label{rem_Higgs_field}
In this remark, we mention a property, which will be used to study how Higgs fields and connections change later. Recall that we define
\begin{align*}
    G(\widehat{\widetilde{R}}_i)^{\rho_i} := \{ \widehat{\Psi}_i \in G(\widehat{\widetilde{R}}_i) \, | \, \rho_i (\gamma_i) \widehat{\Psi}_i(w_i) \rho_i (\gamma_i)^{-1} = \Psi_i (\gamma_i \cdot w_i) \}.
\end{align*}
in Remark \ref{rem_local_stack}. With a similar approach as \cite[Theorem 2.3.1]{BS15}, we have
\begin{align*}   G(\widehat{\widetilde{R}}_i)^{\rho_{d_i}} \cong G_{\alpha_i} (\widehat{R}_i).
\end{align*}
More precisely, given an isomorphism $\widehat{\Psi}_i: \mathscr{E}_i \rightarrow \mathscr{E}_i$, we regard it as an element in $G(\widehat{\widetilde{R}}_i)^{\rho_{i}}$, i.e.
\begin{align*}
    \rho(\gamma_i) \widehat{\Psi}_i(w_i) \rho(\gamma_i)^{-1} = \widehat{\Psi}_i (\gamma_i \cdot w_i).
\end{align*}
Define $\widehat{\Psi}'_i = \Delta_i^{-1} \widehat{\Psi}_i \Delta_i$, which is $\mu_{d_i}$-invariant, i.e.
\begin{align*}
    \widehat{\Psi}'_i(\gamma_i \cdot w_i)  = \widehat{\Psi}'_i(w_i).
\end{align*}
Thus, $\widehat{\Psi}'_i(w_i)$ descents to a well-defined element in $G(\widehat{K}_i)$ and it is easy to check that it is in $G_{\alpha_i}(\widehat{R}_i)$. Indeed, conjugating by $\Delta_i$ and substituting $z_i = w_i^{d_i}$ induces the desired isomorphism.
\end{rem}

Now we suppose that $X$ is projective, and then $Y$ given in Remark \ref{rem_Kaw_lem} can be required to be projective by Kawamata covering lemma. Let $\eta$ (resp. $\widetilde{\eta}$) be the generic point of $X$ (resp. $Y$). Let $\mathcal{E}$ be parahoric $\mathcal{G}_{\boldsymbol\alpha}$-torsor on $X$. Let $\mathcal{P}$ be a maximal parabolic subgroup of the generic fiber $(\mathcal{G}_{\boldsymbol\alpha})_\eta$ of $\mathcal{G}_{\boldsymbol\alpha}$. Given a generic reduction $\varsigma_\eta: X_\eta \rightarrow \mathcal{E}_\eta / \mathcal{P}$ and a character $\kappa: \mathcal{P} \rightarrow \mathbb{G}_{m,\eta}$, we obtain a line bundle $L(\varsigma_\eta,\kappa)$ on $X \backslash D$.

\begin{lem}\label{lem_parab_line_bundle}
This line bundle $L(\varsigma_\eta,\kappa)$ can be extended canonically to a parabolic line bundle on a big open subset $X^{\rm big}$ and denote it by $L(\varsigma,\kappa)$.
\end{lem}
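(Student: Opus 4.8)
The plan is to transfer the statement along the equivalence of Proposition~\ref{prop_parah_equiv_G}, where the parahoric torsor becomes an honest $G$-bundle on the root stack $\mathscr{X}$, to extend the generic reduction there by properness and normality, and then to descend along $\pi$, reading off the parabolic weights from the residual $\mu_{d_i}$-actions.

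First I would fix $\mathscr{E}:=\eta_{\mathscr{F}}(\mathcal{E})=\pi^*\mathcal{E}\times_{\pi^*\mathcal{G}_{\boldsymbol\alpha}}\mathscr{F}$, the $G$-bundle of type $\boldsymbol\rho$ on $\mathscr{X}$ corresponding to $\mathcal{E}$. The generic point of $\mathscr{X}$ coincides with $\eta$, and $(\mathcal{G}_{\boldsymbol\alpha})_\eta=G_{K(X)}$, so $\mathcal{P}$ is a maximal parabolic of $G_{K(X)}$ and $\varsigma_\eta$ is a rational section of the $\mathscr{X}$-scheme $\mathscr{E}/\mathcal{P}$, which over the generic point is a form of $G/P$, hence projective, and which spreads out to a proper $\mathscr{X}$-scheme over a neighbourhood of every codimension-one point of $\mathscr{X}$ (over $X\setminus D$ it is just $\mathscr{E}/P$ for a parabolic $P\subseteq G$; near the generic point of $\widetilde{D}_i$ one works on the local model $[\Spec\widehat{\widetilde R}_i/\mu_{d_i}]$ where $\mathscr{E}$ is an honest $G$-bundle). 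Since $\mathscr{X}$ is smooth, hence normal, the valuative criterion of properness at each codimension-one point extends $\varsigma_\eta$ over the complement of a closed substack of codimension $\geq 2$; I denote this big open substack $\mathscr{X}^{\rm big}$ and the extended reduction $\sigma:\mathscr{X}^{\rm big}\to(\mathscr{E}|_{\mathscr{X}^{\rm big}})/P$. The extension is unique because $\mathscr{E}/P$ is separated and $\mathscr{X}^{\rm big}$ is integral, so the bad locus is canonically defined and is a genuine closed substack; shrinking if necessary, I may assume $\mathscr{X}^{\rm big}=\pi^{-1}(X^{\rm big})$ with $X^{\rm big}:=X\setminus\pi(\mathscr{X}\setminus\mathscr{X}^{\rm big})$, which is a big open subset of $X$ because $\pi$ is finite and flat, hence closed, open, and codimension-preserving.

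Next, letting $\chi:P\to\mathbb{G}_m$ be the character corresponding to $\kappa$, form $L(\sigma,\chi):=\chi_*\big((\mathscr{E}|_{\mathscr{X}^{\rm big}})_\sigma\big)$, a line bundle on $\mathscr{X}^{\rm big}$ exactly as in Definition~\ref{defn_R_stab_G}. Pushing it forward along $\pi$ and invoking the dictionary between line bundles on a tame root stack and parabolic line bundles on its coarse space with weights in $\tfrac1{d_i}\mathbb{Z}$ (Borne, \cite{Bor07}), I obtain a parabolic line bundle $L(\varsigma,\kappa)$ on $X^{\rm big}$: its underlying line bundle restricts over $X\setminus D$ to $L(\varsigma_\eta,\kappa)$ since $\pi$ is an isomorphism there, and its parabolic weight along $D_i\cap X^{\rm big}$ is the $\mu_{d_i}$-weight of $L(\sigma,\chi)$ at the generic point of $\widetilde{D}_i$, a rational number determined by $\kappa$ and $\rho_i$ (equivalently by $\langle\alpha_i,-\rangle$ pushed through $\kappa$). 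Canonicity of the whole construction is then immediate from the uniqueness of $\sigma$ together with the fact that Proposition~\ref{prop_parah_equiv_G} and the root-stack/parabolic-bundle correspondence are equivalences of categories.

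The step I expect to be the real work is the local analysis at the $\eta_i$: one must check that $\pi_*L(\sigma,\chi)$ genuinely is a parabolic line bundle with the predicted fractional weight, i.e. that the $\mu_{d_i}$-equivariant structure of $L(\sigma,\chi)$ near $\widetilde{D}_i$ is precisely the one dictated by the weight $\alpha_i$ built into $\mathcal{G}_{\alpha_i}$. This is carried out on the explicit local model $\mathscr{X}\times_X\Spec R_i\cong[\Spec\widetilde{R}_i/\mu_{d_i}]$ of \S\ref{subsect_G_stack}, using the identification $G(\widehat{\widetilde R}_i)^{\rho_i}\cong G_{\alpha_i}(\widehat{R}_i)$ recorded in Remark~\ref{rem_Higgs_field} (following \cite[Theorem 2.3.1]{BS15}) to match $\sigma$ and $\chi$ on the stack side with a reduction of $\mathcal{E}$ to a parahoric parabolic subgroup scheme and with $\kappa$ on the $X$-side. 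Everything else is the standard bookkeeping of extending a rational section over a big open subset of a normal base and of the reflexive-hull extension across codimension $\geq 2$.
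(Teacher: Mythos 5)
Your proposal is correct and follows essentially the same route as the paper: transfer $\mathcal{E}$ to the stacky side via Proposition~\ref{prop_parah_equiv_G} (the paper phrases this as a $(\Gamma,G)$-bundle on the Kawamata cover $Y$ with $\mathscr{X}\cong[Y/\Gamma]$, and matches parabolics, characters and generic reductions via \cite[Proposition 6.3.1]{BS15}), extend the generic reduction over a big open subset by properness of the flag bundle over a normal base (the paper cites \cite[Theorem 12.60]{GW20} where you invoke the valuative criterion plus the codimension-two argument), form $L(\sigma,\chi)$, and descend to a parabolic line bundle on $X^{\rm big}$ by Borne's correspondence \cite{Bor07}. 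The only cosmetic difference is that you work directly on $\mathscr{X}$ and sketch the local weight bookkeeping, which the paper subsumes into the citation of \cite{Bor07}.
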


\begin{proof}
By Proposition \ref{prop_parah_equiv_G}, denote by $E$ the $(\Gamma,G)$-bundle on $Y$ corresponding to $\mathcal{E}$. With the same argument as \cite[Proposition 6.3.1]{BS15}, we have the following correspondences:
\begin{itemize}
    \item maximal parabolic subgroups $\mathcal{P} \subseteq (\mathcal{G}_{\boldsymbol\alpha})_\eta$ and maximal parabolic subgroups $P$ of $G$;
    \item characters $\mathcal{P} \rightarrow \mathbb{G}_{m,\eta}$ and characters $P \rightarrow \mathbb{G}_m$;
    \item generic reductions $\varsigma_\eta: X_\eta \rightarrow \mathcal{E}_\eta / \mathcal{P}$ and $\Gamma$-equivariant generic reductions $\sigma_{\widetilde{\eta}}: Y_{\widetilde{\eta}} \rightarrow E_{\widetilde{\eta}} / P_{\widetilde{\eta}}$.
\end{itemize}
Given the above correspondences, the corresponding $\Gamma$-equivariant generic reduction of structure group $\sigma_{\widetilde{\eta}}: Y_{\widetilde{\eta}} \rightarrow E_{\widetilde{\eta}} / P_{\widetilde{\eta}}$ is well-defined on an open subset $Y^{\rm big} \subseteq Y$ by \cite[Theorem 12.60]{GW20}, where ${\rm codim}(Y \backslash Y^{\rm big}) \geq 2$, and denote the corresponding reduction of structure group on $Y^{\rm big}$ by $\sigma$. Then we obtain a $\Gamma$-equivariant line bundle $L(\sigma,\chi)$ on $Y^{\rm big}$. By \cite{Bor07}, $L(\sigma,\chi)$ corresponds to a parabolic line bundle on a big open subset $X^{\rm big} := Y^{\rm big} / \Gamma$, which is the desired parabolic bundle.
\end{proof}

As a parabolic line bundle, denote by $par \deg L(\varsigma,\kappa)$ the parabolic degree of $L(\varsigma,\kappa)$. Following the above construction, we define the $R$-stability condition for parahoric torsors as follows:
\begin{defn}\label{defn_stab_cond_parah}
A parahoric $\mathcal{G}_{\boldsymbol\alpha}$-torsor $\mathcal{E}$ is \emph{$R$-semistable} (resp. \emph{$R$-stable}), if
\begin{itemize}
	\item for any maximal parabolic subgroup $\mathcal{P} \subseteq (\mathcal{G}_{\boldsymbol\alpha})_\eta$;
	\item for any antidominant character $\kappa: \mathcal{P} \rightarrow \mathbb{G}_{m,\eta}$ trivial on the center;
	\item for any generic reduction of structure group $\varsigma_\eta: X_\eta \rightarrow \mathcal{E}_\eta / \mathcal{P}$,
\end{itemize}
we have
\begin{align*}
par \deg L(\varsigma,\kappa) \geq 0 \text{ (resp. $> 0$)}.
\end{align*}
\end{defn}

\begin{prop}\label{prop_parah_equiv_G_stab}
Let $\mathcal{E}$ be a parahoric $\mathcal{G}_{\boldsymbol\alpha}$-torsor on $X$. Denote by $\mathscr{E}$ the corresponding $G$-bundle of type $\boldsymbol\rho$ on $\mathscr{X}$. Then $\mathcal{E}$ is $R$-(semi)stable if and only if $\mathscr{E}$ is $R$-(semi)stable.
\end{prop}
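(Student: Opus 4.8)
The plan is to reduce the statement to the corresponding fact about $R$-(semi)stability of the associated $(\Gamma,G)$-bundle $E$ on the Kawamata cover $Y$, and then to match this with the $R$-(semi)stability of $\mathscr{E}$ via the descent $\mathscr{X}\cong[Y/\Gamma]$. By Lemma~\ref{lem_parab_line_bundle} and its proof, the three relevant data on the parahoric side --- a maximal parabolic $\mathcal{P}\subseteq(\mathcal{G}_{\boldsymbol\alpha})_\eta$, an antidominant character $\kappa:\mathcal{P}\to\mathbb{G}_{m,\eta}$ trivial on the center, and a generic reduction $\varsigma_\eta$ --- are in bijection with a maximal parabolic $P\subseteq G$, an antidominant character $\chi:P\to\mathbb{G}_m$ trivial on the center, and a $\Gamma$-equivariant generic reduction $\sigma_{\widetilde\eta}:Y_{\widetilde\eta}\to E_{\widetilde\eta}/P_{\widetilde\eta}$, which extends to a $\Gamma$-equivariant reduction $\sigma$ over a big open $Y^{\rm big}\subseteq Y$. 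So the statement to prove becomes: $\mathcal{E}$ is $R$-(semi)stable if and only if $E$ is $R$-(semi)stable as a $\Gamma$-equivariant $G$-bundle (equivalently, $\mathscr{E}$ is $R$-(semi)stable on $\mathscr{X}$, since reductions of $\mathscr{E}$ over big open substacks correspond exactly to $\Gamma$-equivariant reductions of $E$ over big open $\Gamma$-invariant subsets of $Y$).

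The key step is therefore a degree comparison: I would show that for corresponding data,
\begin{align*}
par\deg L(\varsigma,\kappa) \;=\; \frac{1}{|\Gamma|}\,\deg_{\pi^*H} L(\sigma,\chi),
\end{align*}
or, equivalently, that $par\deg L(\varsigma,\kappa)$ equals $\deg_H$ of the parabolic line bundle on $X^{\rm big}=Y^{\rm big}/\Gamma$ that $L(\sigma,\chi)$ descends to under the Biswas-type correspondence of \cite{Bor07}. This is precisely the content invoked at the end of the proof of Lemma~\ref{lem_parab_line_bundle}: the $\Gamma$-equivariant line bundle $L(\sigma,\chi)$ on $Y^{\rm big}$ corresponds to a parabolic line bundle, and the parabolic degree of a parabolic line bundle on $X^{\rm big}$ coincides (up to the normalization built into \cite[4.1.2]{Bor07}) with $\frac{1}{|\Gamma|}$ times the ordinary degree of its pullback to $Y^{\rm big}$. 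Granting this, semistability (resp.\ stability) for $\mathcal{E}$ --- nonnegativity (resp.\ positivity) of $par\deg L(\varsigma,\kappa)$ over all admissible data --- translates literally into the same inequalities for $\deg L(\sigma,\chi)$ over all admissible $\Gamma$-equivariant data, which by the bijection above is exactly $R$-(semi)stability of $\mathscr{E}$. One should also note that it suffices to run the comparison over big open subsets, since degrees of line bundles are unchanged by removing codimension $\geq 2$ loci, so there is no loss in the passage from $X$ to $X^{\rm big}$.

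The main obstacle is the bookkeeping in the degree identity, specifically making precise how the local weights $\alpha_i$ (built into $\mathcal{G}_{\boldsymbol\alpha}$, hence into the parabolic structure of $L(\varsigma,\kappa)$ along $D$) match the ramification contributions coming from $\pi:Y\to X$ being ramified of order $d_i$ along $D_i$, so that the fractional parts in $par\deg$ on the $X$-side are exactly accounted for by the ordinary integer degree on the $Y$-side divided by $|\Gamma|$. This is the same computation that underlies \cite[Proposition 6.3.1]{BS15} on curves; here one must check it survives in higher dimension, but since everything is tested at the generic points $\eta_i$ of the components $D_i$ --- where the local model is $[\Spec\widetilde{R}_i/\mu_{d_i}]$ exactly as on a curve --- the curve computation applies verbatim after restricting to these generic points, and the behaviour in codimension $\geq 2$ is irrelevant for degrees. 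A secondary point requiring care is the antidominance/center-triviality condition on $\kappa$ versus $\chi$: one must verify that the correspondence of \cite[Proposition 6.3.1]{BS15} preserves antidominance and triviality on the center, so that the quantifier ranges on the two sides genuinely match; this is immediate from the fact that $(\mathcal{G}_{\boldsymbol\alpha})_\eta\cong G_{K}$ after the base change to $\widehat{K}_i$, under which maximal parabolics, their characters, and the positivity conditions correspond.
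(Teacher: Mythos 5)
Your proposal is correct and follows essentially the same route as the paper: match the data (maximal parabolics, antidominant central-trivial characters, generic reductions) via Lemma~\ref{lem_parab_line_bundle}, then compare degrees, the identity $\deg L(\sigma,\chi)=par\deg L(\varsigma,\kappa)$ being exactly \cite[Theorem 4.3]{Bor07}. The only difference is presentational: you unwind the degree comparison through the Kawamata cover $Y$ (writing it as $\tfrac{1}{|\Gamma|}\deg_{\pi^*H}$ of the equivariant line bundle), whereas the paper cites Borne's theorem equating the stacky degree with the parabolic degree directly.
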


\begin{proof}
Under the isomorphism $\mathscr{X} \cong [Y/ \Gamma]$, the proof of Lemma \ref{lem_parab_line_bundle} implies the following one-to-one correspondences
\begin{itemize}
\item characters: $\chi: P \rightarrow \mathbb{G}_m$ and $\kappa: P_{\eta} \rightarrow \mathbb{G}_{m,\eta}$,
\item reductions: $\varsigma_\eta: X_\eta \rightarrow \mathcal{E}_\eta / \mathcal{P}$ and reductions of $\sigma: \mathscr{X}^{\rm big} \rightarrow ( \mathscr{E} |_{\mathscr{X}^{\rm big}}) /P$ for some big open substack $\mathscr{X}^{\rm big}$.
\end{itemize}
Therefore, we only have to show the equality
\begin{align*}
    \deg L(\sigma, \chi) = par \deg L(\varsigma,\kappa),
\end{align*}
which is given in \cite[Theorem 4.3]{Bor07}.
\end{proof}

\subsection{Logahoric Higgs Torsors and Logahoric Connections}\label{subsect_parah_Higgs_conn}

\begin{defn}
A \emph{logahoric $\mathcal{G}_{\boldsymbol\alpha}$-Higgs torsor} is a pair $(\mathcal{E},\vartheta)$, where $\mathcal{E}$ is a parahoric $\mathcal{G}_{\boldsymbol\alpha}$-torsor and $\vartheta: X \rightarrow \mathcal{E}(\mathfrak{g}) \otimes \Omega_{X}( {\rm log} \, D)$ is a logarithmic Higgs field. Moreover, $(\mathcal{E},\vartheta)$ is \emph{$R$-semistable} (resp. \emph{$R$-stable}) if
\begin{itemize}
	\item for any maximal parabolic subgroup $\mathcal{P} \subseteq (\mathcal{G}_{\boldsymbol\alpha})_\eta$;
	\item for any antidominant character $\kappa: \mathcal{P} \rightarrow \mathbb{G}_{m,\eta}$ trivial on the center;
	\item for any $\vartheta$-compatible reduction of structure group $\varsigma: X_\eta \rightarrow \mathcal{E}_\eta / \mathcal{P}$,
\end{itemize}
we have
\begin{align*}
    par \deg L(\varsigma,\kappa) \geq 0 \text{ (resp. $> 0$)}.
\end{align*}
\end{defn}

\begin{prop}\label{prop_parah_equiv_G_Higgs_stab}
Logahoric $\mathcal{G}_{\boldsymbol\alpha}$-Higgs torsors $(\mathcal{E},\vartheta)$ on $X$ are in one-to-one correspondence with logarithmic $G$-Higgs bundles $(\mathscr{E},\theta)$ of type $\boldsymbol\rho$ on $\mathscr{X}$. Moreover, this correspondence preserves the stability conditions.
\end{prop}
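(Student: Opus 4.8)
The plan is to upgrade the equivalence ${\rm Bun}(\mathscr{X},G,\boldsymbol\rho)\simeq{\rm Bun}(X,\mathcal{G}_{\boldsymbol\alpha})$ of Proposition~\ref{prop_parah_equiv_G} to an equivalence that carries a logarithmic Higgs field, and then to match up the Higgs-compatible reductions of structure group, so that the stability half follows from Proposition~\ref{prop_parah_equiv_G_stab}.

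\textbf{Step 1 (adjoint sheaves).} Let $\mathscr{E}$ be a $G$-bundle of type $\boldsymbol\rho$ on $\mathscr{X}$ and let $\mathcal{E}=\beta_{\mathscr{F}}(\mathscr{E})$ be the corresponding parahoric $\mathcal{G}_{\boldsymbol\alpha}$-torsor. The first claim is that
\[
\pi_*\big(\mathscr{E}(\mathfrak{g})\big)\;\cong\;\mathcal{E}(\mathfrak{g}),
\]
where the right-hand side denotes the adjoint bundle $\mathcal{E}\times^{\mathcal{G}_{\boldsymbol\alpha}}{\rm Lie}(\mathcal{G}_{\boldsymbol\alpha})$. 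Since this is local on $X$, it suffices to check it over $X\setminus D$ — where $\pi$ is an isomorphism and $\mathcal{G}_{\boldsymbol\alpha}\cong G$ — and over each $\Spec\,\widehat{R}_i$. There, using $\mathscr{X}\times_X\Spec\,\widehat{R}_i\cong[\Spec\,\widehat{\widetilde{R}}_i/\mu_{d_i}]$, the description of the $\mu_{d_i}$-action via $\rho_i$, and the conjugation-by-$\Delta_i$ normalization of Remark~\ref{rem_Higgs_field}, the $\mu_{d_i}$-invariant pushforward of $\mathscr{E}_i(\mathfrak{g})$ is the parahoric Lie algebra $\mathfrak{g}_{\alpha_i}$ attached to the weight $\alpha_i$ determined by $\rho_i$; this is the Lie-algebra version of \cite[Theorem~2.3.1]{BS15}.

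\textbf{Step 2 (transporting the Higgs field).} By tameness $\pi_*\mathcal{O}_{\mathscr{X}}=\mathcal{O}_X$, and in the local model of Example~\ref{exmp_local_stack} one computes $\pi^*(dz_i/z_i)=d_i\,(dw_i/w_i)$ with $d_i$ invertible, whence $\pi^*\Omega_X({\rm log}\,D)\cong\Omega_{\mathscr{X}}({\rm log}\,\widetilde{D})$. The projection formula together with Step~1 then gives
\[
\pi_*\big(\mathscr{E}(\mathfrak{g})\otimes\Omega_{\mathscr{X}}({\rm log}\,\widetilde{D})\big)\;\cong\;\pi_*\mathscr{E}(\mathfrak{g})\otimes\Omega_X({\rm log}\,D)\;\cong\;\mathcal{E}(\mathfrak{g})\otimes\Omega_X({\rm log}\,D),
\]
and likewise with $\Omega^2$ in place of $\Omega$. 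Taking global sections, a logarithmic Higgs field $\theta\colon\mathscr{X}\to\mathscr{E}(\mathfrak{g})\otimes\Omega_{\mathscr{X}}({\rm log}\,\widetilde{D})$ corresponds bijectively to a logarithmic Higgs field $\vartheta\colon X\to\mathcal{E}(\mathfrak{g})\otimes\Omega_X({\rm log}\,D)$, and integrability $\theta\wedge\theta=0$ pushes forward to $\vartheta\wedge\vartheta=0$ and back. A morphism of logarithmic $G$-Higgs bundles on $\mathscr{X}$ intertwining the Higgs fields pushes forward, by exactness of $\pi_*$, to a morphism of logahoric $\mathcal{G}_{\boldsymbol\alpha}$-Higgs torsors, and conversely; this produces the asserted equivalence of categories $(\mathscr{E},\theta)\leftrightarrow(\mathcal{E},\vartheta)$.

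\textbf{Step 3 (stability).} By the proof of Lemma~\ref{lem_parab_line_bundle} and of Proposition~\ref{prop_parah_equiv_G_stab}, maximal parabolics $\mathcal{P}\subseteq(\mathcal{G}_{\boldsymbol\alpha})_\eta$, their antidominant characters $\kappa$, and generic reductions $\varsigma_\eta\colon X_\eta\to\mathcal{E}_\eta/\mathcal{P}$ correspond to maximal parabolics $P\subseteq G$, characters $\chi$, and reductions $\sigma\colon\mathscr{X}^{\rm big}\to(\mathscr{E}|_{\mathscr{X}^{\rm big}})/P$, with $\deg L(\sigma,\chi)={\rm par}\deg L(\varsigma,\kappa)$ by \cite[Theorem~4.3]{Bor07}. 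It remains to check that $\sigma$ is $\theta$-compatible if and only if $\varsigma_\eta$ is $\vartheta$-compatible; this follows by applying Steps~1--2 with $G$ replaced by $P$ — equivalently, by pushing forward the lift $\theta_\sigma$ of $\theta$ to $\mathscr{E}_\sigma(\mathfrak{p})\otimes\Omega_{\mathscr{X}}({\rm log}\,\widetilde{D})$, exactness of $\pi_*$ guaranteeing that the factorization through the subsheaf $\mathcal{E}_\varsigma(\mathfrak{p})\otimes\Omega_X({\rm log}\,D)$ is preserved in both directions. Hence $(\mathcal{E},\vartheta)$ is $R$-(semi)stable precisely when $(\mathscr{E},\theta)$ is.

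\textbf{Main obstacle.} The crux is Step~1: identifying $\pi_*\mathscr{E}(\mathfrak{g})$ with the adjoint bundle of the parahoric group scheme $\mathcal{G}_{\boldsymbol\alpha}$, i.e. verifying locally at each $D_i$ that the $\mu_{d_i}$-invariant pushforward of the adjoint sheaf recovers the parahoric Lie algebra $\mathfrak{g}_{\alpha_i}=\langle\mathfrak{t}(\widehat{R}_i),\,U_r(z_i^{m_r(\alpha_i)}\widehat{R}_i),\,r\in\mathcal{R}\rangle$ with exactly the right vanishing orders $m_r(\alpha_i)$ on each root space. This rests on the precise dictionary between the representation $\rho_i$ and the weight $\alpha_i$ (\cite[Lemma~2.2.8, Theorem~2.3.1]{BS15}); once it is in hand, transporting the Higgs field, the integrability condition, and the Higgs-compatible reductions is a formal consequence of the projection formula and the exactness of $\pi_*$.
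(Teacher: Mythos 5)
Your proposal is correct and, at its mathematical core, follows the same route as the paper: the decisive input in both is the local dictionary at each $D_i$ between $\rho_i$-equivariant data on $\Spec\,\widehat{\widetilde{R}}_i$ and $\mathcal{G}_{\alpha_i}$-data on $\Spec\,\widehat{R}_i$ implemented by conjugation with $\Delta_i$ and the substitution $z_i=w_i^{d_i}$ (the paper's Remark \ref{rem_trans_func} and Remark \ref{rem_Higgs_field}, i.e.\ \cite[Theorem 2.3.1]{BS15}), and the stability statement is handled identically through Lemma \ref{lem_parab_line_bundle}, Proposition \ref{prop_parah_equiv_G_stab} and \cite[Theorem 4.3]{Bor07}. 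The difference is packaging: the paper transports the Higgs field by hand, conjugating $\theta_i$ by $\Delta_i$ on each local chart, regluing via the modified transition functions $\Theta_i'$, and extending from a big open subset by Hartog's lemma, whereas you globalize the same computation into the sheaf identification $\pi_*\mathscr{E}(\mathfrak{g})\cong\mathcal{E}(\mathfrak{g})$ combined with $\pi^*\Omega_X(\log D)\cong\Omega_{\mathscr{X}}(\log\widetilde{D})$ and the projection formula; this is cleaner and makes functoriality of the correspondence (morphisms, integrability, and the reduction-compatibility needed in Step 3) more transparent. One point to tighten: checking your Step 1 over $X\setminus D$ and over the formal neighborhoods $\Spec\,\widehat{R}_i$ of the generic points only identifies the two sheaves over a big open subset $X^{\rm big}$ (it misses intersections of components and other codimension-$2$ loci), so you should add the codimension-$2$ extension step that the paper supplies via Hartog's lemma — here it is immediate because $\mathcal{E}(\mathfrak{g})$ is locally free and $\pi_*\mathscr{E}(\mathfrak{g})$ is reflexive, so an isomorphism over $X^{\rm big}$ extends uniquely to $X$. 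With that line added, your argument is a faithful, slightly more functorial version of the paper's proof.
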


\begin{proof}
By Proposition \ref{prop_parah_equiv_G}, parahoric $\mathcal{G}_{\boldsymbol\alpha}$-torsors are in one-to-one correspondence with $G$-bundles on $\mathscr{X}$ of type $\boldsymbol\rho$, and we have to prove such a correspondence also holds for Higgs fields. By Hartog's lemma, we can work on $X^{\rm big}$ and thus, it is enough to work on the associated tuple $(\mathcal{E}_0, \mathcal{E}_i, \Theta'_i)$ (resp. $(\mathscr{E}_0, \mathscr{E}_i, \Theta_i)$) of parahoric $\mathcal{G}_{\boldsymbol\alpha}$-torsors $\mathcal{E}$ on $X$ (resp. $G$-bundles on $\mathscr{X}$) by Remark \ref{rem_trans_func}. We follow the same notation as in \S\ref{subsect_G_stack} and work on $[\Spec \, \widetilde{R}_i / \mu_{d_i}]$ (resp. $\Spec \, R_i$) with local coordinate $w_i$ (resp. $z_i$).

We start with a Higgs field $\theta$ on $\mathscr{E}$. The restriction of the Higgs field $\theta_i$ on $\mathscr{E}_i$ is $\mu_{d_i}$-equivariant, i.e.
\begin{align*}
    \vartheta_i (\gamma_i w_i) = \rho(\gamma_i) \vartheta_i(w_i) \rho(\gamma_i)^{-1}.
\end{align*}
Define $\vartheta_i:= \Delta_i^{-1} \vartheta_i \Delta_i$, and the new element $\vartheta_i$ is $\mu_{d_i}$-invariant, i.e.
\begin{align*}
    \vartheta_i(\gamma_i w_i) = \vartheta_i(w_i).
\end{align*}
Thus, $\vartheta_i(z_i)$ is well-defined by taking $z_i = w_i^{d_i}$. With the same argument as in Proposition \ref{prop_parah_equiv_G}, the new element $\vartheta_i$ gives a section $\Spec \, R_i \rightarrow \mathcal{E}(\mathfrak{g}) \otimes \Omega_X( {\rm log} \, D)|_{\Spec \, R_i}$. Gluing $\vartheta_i$ via transition functions $\Theta'_i$, we obtain a Higgs field $X^{\rm big} \rightarrow \mathcal{E}^{\rm big}(\mathfrak{g}) \otimes \Omega_{X^{\rm big}}( {\rm log} \, D)$ and thus a Higgs field $\vartheta: X \rightarrow \mathcal{E}(\mathfrak{g}) \otimes \Omega_{X}( {\rm log} \, D)$ by Hartog's lemma. This construction also holds in the other direction by Remark \ref{rem_Higgs_field} and thus a one-to-one correspondence.

Based on the correspondence for Higgs fields given above and the correspondence for reductions of structure group given in Proposition \ref{prop_parah_equiv_G_stab}, there is a one-to-one correspondence between $\vartheta$-compatible generic reductions $\varsigma: X_\eta \rightarrow \mathcal{E}_\eta / \mathcal{P}$ and $\theta$-compatible reductions $\sigma: \mathscr{X}^{\rm big} \rightarrow (\mathscr{E}|_{\mathscr{X}^{\rm big}})/P$ on some big open substack $\mathscr{X}^{\rm big}$. Therefore, the correspondence preserves the stability conditions.
\end{proof}

Note that $\mathcal{E}(\mathfrak{g})$ is a locally free sheaf, of which fibers are isomorphic to $\mathfrak{g}$, and a logarhoric Higgs field is a section $
\varsigma: X \rightarrow \mathcal{E}(\mathfrak{g}) \otimes_{\mathcal{O}_X} \Omega_{X} ( {\rm log} \, D)$. Therefore, nilpotent logahoric Higgs torsors is defined in the same way as that for $G$-Higgs bundles in \S\ref{subsect_nil}. Then we introduce the following categories:
\begin{itemize}
    \item ${\rm HIG}_{p-1}(X_{\rm log}/k,\mathcal{G}_{\boldsymbol\alpha})$: the category of nilpotent logahoric $\mathcal{G}_{\boldsymbol\alpha}$-Higgs torsors on $X$ of exponent $\leq p-1$;
    \item ${\rm HIG}_{p-1}(\mathscr{X}_{\rm log}/k , G, \boldsymbol\rho)$: the category of nilpotent logarithmic $G$-Higgs bundles of type $\boldsymbol\rho$ on $\mathscr{X}$ of exponent $\leq p-1$.
\end{itemize}

As a direct result of Proposition \ref{prop_parah_equiv_G_Higgs_stab}, we have the desired equivalence of categories:

\begin{cor}\label{cor_parah_equiv_G_Higgs}
The following categories are equivalent:
\begin{itemize}
    \item ${\rm HIG}_{p-1}(\mathscr{X}_{\rm log}/k , G, \boldsymbol\rho)$ and ${\rm HIG}_{p-1}(X_{\rm log}/k,\mathcal{G}_{\boldsymbol\alpha})$;
    \item ${\rm HIG}^{(s)s}_{p-1}(\mathscr{X}_{\rm log}/k , G, \boldsymbol\rho)$ and ${\rm HIG}^{(s)s}_{p-1}(X_{\rm log}/k,\mathcal{G}_{\boldsymbol\alpha})$.
\end{itemize}
\end{cor}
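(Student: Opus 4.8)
The plan is to deduce both equivalences directly from Proposition \ref{prop_parah_equiv_G_Higgs_stab}, which already supplies a one-to-one correspondence between logahoric $\mathcal{G}_{\boldsymbol\alpha}$-Higgs torsors on $X$ and logarithmic $G$-Higgs bundles of type $\boldsymbol\rho$ on $\mathscr{X}$, compatible with stability. The only thing left to check is that this correspondence is an \emph{equivalence of categories} (not merely a bijection on objects) and that it respects the nilpotency-of-exponent-$\leq p-1$ condition in both directions. For the first point, I would observe that the construction in Proposition \ref{prop_parah_equiv_G_Higgs_stab} is built on the functors $\beta_{\mathscr{F}}$ and $\eta_{\mathscr{F}}$ of Proposition \ref{prop_parah_equiv_G}, which form a quasi-inverse pair; extending each by the transport of the Higgs field described in the proof (conjugation by $\Delta_i$, descent under $z_i = w_i^{d_i}$, gluing via transition functions, and Hartog extension across the codimension-$\geq 2$ locus) is functorial, since morphisms of $G$-Higgs bundles are sent to morphisms of the associated tuples $(\mathcal{E}_0,\mathcal{E}_i,\Theta'_i)$ compatibly with the Higgs fields, and vice versa. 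Thus one obtains an honest equivalence $\mathrm{HIG}(\mathscr{X}_{\rm log}/k,G,\boldsymbol\rho) \simeq \mathrm{HIG}(X_{\rm log}/k,\mathcal{G}_{\boldsymbol\alpha})$ of categories of logahoric/logarithmic Higgs data without the exponent restriction.

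Next I would restrict this equivalence to the full subcategories cut out by nilpotency of exponent $\leq p-1$. By definition (\S\ref{subsect_nil}, Definition \ref{defn_nil}) nilpotency is a local property, tested on an affine cover; on the chart $[\Spec\,\widetilde{R}_i/\mu_{d_i}]$ versus $\Spec\,R_i$ the Higgs field of the logahoric torsor is, up to the conjugation by $\Delta_i \in T(\widehat{\widetilde{R}}_i)$ and the substitution $z_i = w_i^{d_i}$, literally the $\mu_{d_i}$-invariant avatar of $\theta_i$. Conjugation by an element of $T$ and the étale-type change of variable preserve membership in $\mathfrak{u}(\mathcal{O})$ for the Lie algebra $\mathfrak{u}$ of a unipotent radical of a parabolic $P \subseteq G$, since $T$ normalizes $P$ and hence $\mathfrak{u}$; the nilpotency class of $U$ is unchanged. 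Away from $D$ the two objects agree after pullback along the étale map $\mathscr{X}\setminus\widetilde{D} \cong X\setminus D$, so the local nilpotency test transports verbatim. Hence $(\mathcal{E},\vartheta)$ is nilpotent of exponent $\leq p-1$ if and only if the corresponding $(\mathscr{E},\theta)$ is, and the equivalence restricts to $\mathrm{HIG}_{p-1}(\mathscr{X}_{\rm log}/k,G,\boldsymbol\rho) \simeq \mathrm{HIG}_{p-1}(X_{\rm log}/k,\mathcal{G}_{\boldsymbol\alpha})$.

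Finally, the stability bullet is immediate: Proposition \ref{prop_parah_equiv_G_Higgs_stab} already records that $\vartheta$-compatible generic reductions of $\mathcal{E}$ correspond bijectively to $\theta$-compatible reductions of $\mathscr{E}$ on big open substacks, and that $par\deg L(\varsigma,\kappa) = \deg L(\sigma,\chi)$ under this correspondence (via \cite[Theorem 4.3]{Bor07}). Therefore $R$-(semi)stability is preserved in both directions, and the equivalence restricts further to the $R$-(semi)stable full subcategories $\mathrm{HIG}^{(s)s}_{p-1}(\mathscr{X}_{\rm log}/k,G,\boldsymbol\rho) \simeq \mathrm{HIG}^{(s)s}_{p-1}(X_{\rm log}/k,\mathcal{G}_{\boldsymbol\alpha})$. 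I expect the only genuinely delicate point to be verifying that the transport of Higgs fields is functorial on morphisms and compatible with composition across the Hartog extension — i.e.\ that a morphism defined on $X^{\rm big}$ really does extend and that extension commutes with the gluing — but this is the same bookkeeping already used implicitly in Proposition \ref{prop_parah_equiv_G_Higgs_stab}, so it presents no new obstacle.
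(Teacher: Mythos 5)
Your proposal follows exactly the paper's route: the paper states this corollary as a direct consequence of Proposition \ref{prop_parah_equiv_G_Higgs_stab} (whose correspondence is built on the functors of Proposition \ref{prop_parah_equiv_G} and the transport of Higgs fields by conjugation with $\Delta_i$ and the substitution $z_i=w_i^{d_i}$), with no further argument given. The extra checks you spell out — functoriality of the transport and preservation of nilpotency of exponent $\leq p-1$ under conjugation by torus-valued elements — are left implicit in the paper and are consistent with its definitions, so your write-up is correct and essentially the same proof.
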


Now we consider logarithmic connections. Chen--Zhu gave the definition of (integrable) connections on $\mathcal{G}$-torsors, where $\mathcal{G}$ is an arbitrary smooth affine group scheme over $X$ \cite[Appendix]{CZ15}. We follow their setup and define logarithmic integrable connections on parahoric torsors. Let $\mathcal{V}$ be a parahoric $\mathcal{G}_{\boldsymbol\alpha}$-torsor. A(n) \emph{(integrable) logarithmic connection} $\nabla'$ on $\mathcal{V}$ is a(n) (integrable) connection $\nabla': \mathcal{O}_{\mathcal{V}} \rightarrow \mathcal{O}_{\mathcal{V}} \otimes_{\mathcal{O}_X} \Omega_{X}({\rm log} \, D)$ compatible with the multiplication of $\mathcal{O}_{\mathcal{V}}$ such that the following diagram commutes
\begin{equation*}
\begin{tikzcd}
\mathcal{O}_{\mathcal{V}} \arrow[r] \arrow[d, "\nabla'"] & \mathcal{O}_{\mathcal{V}} \otimes_{\mathcal{O}_X} \mathcal{O}_{G \times X} \arrow[d,"\nabla' \otimes 1 + 1 \otimes \nabla_{\mathcal{G}_{\boldsymbol\alpha}}"] \\
\mathcal{O}_{\mathcal{V}} \otimes_{\mathcal{O}_X} \Omega_{X}({\rm log} \, D) \arrow[r] & (\mathcal{O}_{\mathcal{V}} \otimes_{\mathcal{O}_X} \mathcal{O}_{G \times X}) \otimes_{\mathcal{O}_X} \Omega_{X}({\rm log} \, D)
\end{tikzcd}
\end{equation*}
where $\nabla_{\mathcal{G}_{\boldsymbol\alpha}}$ is an integrable connection on $\mathcal{O}_{\mathcal{G}_{\boldsymbol\alpha}}$.

\begin{defn}
A \emph{logahoric $\mathcal{G}_{\boldsymbol\alpha}$-connection} is a pair $(\mathcal{V},\nabla')$, where $\mathcal{V}$ is a parahoric $\mathcal{G}_{\boldsymbol\alpha}$-torsor and $\nabla': \mathcal{O}_{\mathcal{V}} \rightarrow \mathcal{O}_{\mathcal{V}} \otimes \Omega_{X}({\rm log}\, D)$ is a logarithmic integrable connection.
\end{defn}

Given a logahoric $\mathcal{G}_{\boldsymbol\alpha}$-connection, denote by $\psi':=\psi(\nabla')$ the $p$-curvature of $\nabla'$. Then, the nilpotency of a logarithmic integrable connection $\nabla'$ is defined in the same way as \S\ref{subsect_nil}. Furthermore, we define the residue map
\begin{align*}
    {\rm Res}_{D_i} \nabla': \mathcal{O}_{\mathcal{V}} \otimes \mathcal{O}_{D_i} \rightarrow \mathcal{O}_{\mathcal{V}} \otimes \mathcal{O}_{D_i}
\end{align*}
as in \S\ref{subsect_log_G_Higgs_and_conn}. We introduce the following categories:
\begin{itemize}
    \item ${\rm MIC}_{p-1}(X_{\rm log}/k, \mathcal{G}_{\boldsymbol\alpha})$: the category of nilpotent logahoric $\mathcal{G}_{\boldsymbol\alpha}$-connections $(\mathcal{V},\nabla')$ of exponent $\leq p-1$ on $X$ such that
    \begin{enumerate}
        \item the semisimple part of the residue ${\rm Res}_{D_i} \nabla'$ is $\alpha_i$ for each $1 \leq i \leq s$, where we regard $\alpha_i$ as an element in the Lie algebra $\mathfrak{g}$;
        \item the nilpotent part of ${\rm Res}_{D_i} \nabla'$ is nilpotent of exponent $\leq p-1$ for every $1 \leq i \leq s$.
    \end{enumerate}
    \item ${\rm MIC}_{p-1}(\mathscr{X}_{\rm log}/k, G ,\boldsymbol\rho)$: the category of nilpotent flat $G$-bundles $(\mathscr{V},\nabla)$ of type $\boldsymbol\rho$ of exponent $\leq p-1$ on $\mathscr{X}$, of which the residue ${\rm Res}_{\widetilde{D}_i} \nabla$ is nilpotent of exponent $\leq p-1$ for every $1 \leq i \leq s$.
\end{itemize}
The extra condition that the semisimple part of the residue ${\rm Res}_{D_i} \nabla'$ is $\alpha_i$ for each $1 \leq i \leq s$ in the definition of the category ${\rm MIC}_{p-1}(X_{\rm log}/k, \mathcal{G}_{\boldsymbol\alpha})$ is called \emph{adjustness}, which is studied by the authors in \cite[\S 2]{KS20a}.

\begin{prop}\label{prop_parah_equiv_G_conn}
The categories ${\rm MIC}_{p-1}(X_{\rm log}/k, \mathcal{G}_{\boldsymbol\alpha})$ and ${\rm MIC}_{p-1}(\mathscr{X}_{\rm log}/k, G ,\boldsymbol\rho)$ are equivalent.
\end{prop}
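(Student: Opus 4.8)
The plan is to promote the torsor-level equivalence of Proposition~\ref{prop_parah_equiv_G} to one of connections, running in parallel with the Higgs-field case of Proposition~\ref{prop_parah_equiv_G_Higgs_stab}; the one genuinely new feature is the appearance of the semisimple part $\alpha_i$ in the residue on the $X$-side, i.e. \emph{adjustedness}. First I would record that, by Proposition~\ref{prop_parah_equiv_G}, the underlying $\mathcal{G}_{\boldsymbol\alpha}$-torsor of an object of ${\rm MIC}_{p-1}(X_{\rm log}/k,\mathcal{G}_{\boldsymbol\alpha})$ and the underlying type-$\boldsymbol\rho$ $G$-bundle of an object of ${\rm MIC}_{p-1}(\mathscr{X}_{\rm log}/k,G,\boldsymbol\rho)$ already correspond. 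Since a logarithmic connection is determined by its restriction to a big open subset by Hartogs, it suffices to transfer the connection separately over $X\backslash D\cong\mathscr{X}\backslash\widetilde{D}$, where it carries over verbatim, and over each $\Spec R_i$ versus $[\Spec \widetilde{R}_i/\mu_{d_i}]$, working with the associated tuples $(\mathcal{E}_0,\mathcal{E}_i,\Theta'_i)$ and $(\mathscr{E}_0,\mathscr{E}_i,\Theta_i)$ of Remark~\ref{rem_trans_func}, and then to glue.

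The local step is a conjugation-and-descent argument. Working with the $(\mu_{d_i},G)$-bundle $\mathscr{V}_i$ over $\Spec \widetilde{R}_i$ (the $\mu_{d_i}$-action given by $\rho_i$, as in Remark~\ref{rem_type}) and a $\mu_{d_i}$-equivariant logarithmic connection $d+A_i$ with $A_i$ a logarithmic $\mathfrak{g}$-valued one-form whose residue along $\widetilde{D}_i$ is nilpotent of exponent $\le p-1$, I would conjugate by the element $\Delta_i\in T(\widehat{\widetilde{R}}_i)$ of Remark~\ref{rem_trans_func} satisfying $\Delta_i(\gamma_i w_i)=\rho_i(\gamma_i)\Delta_i(w_i)$ --- concretely the image of $w_i$ under the cocharacter $d_i\alpha_i\colon\mathbb{G}_m\to T$. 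Exactly as for automorphisms in Remark~\ref{rem_Higgs_field}, the gauge transform $d+\Delta_i^{-1}d\Delta_i+\mathrm{Ad}(\Delta_i^{-1})A_i$ is $\mu_{d_i}$-invariant and hence descends, under $z_i=w_i^{d_i}$, to a logarithmic connection $\nabla'_i$ on $\mathcal{V}_i$. The decisive computation is $\Delta_i^{-1}d\Delta_i=(d_i\alpha_i)\tfrac{dw_i}{w_i}=\alpha_i\tfrac{dz_i}{z_i}$, using $\tfrac{dz_i}{z_i}=d_i\tfrac{dw_i}{w_i}$, so that the Jordan decomposition of $\mathrm{Res}_{D_i}\nabla'_i=\alpha_i+\mathrm{Res}_{D_i}\big(\mathrm{Ad}(\Delta_i^{-1})A_i\big)$ has semisimple part exactly $\alpha_i$ and nilpotent part of exponent $\le p-1$ (the second summand being conjugate, root space by root space, to the nilpotent residue of the original connection). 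Thus $(\mathcal{V},\nabla')\in{\rm MIC}_{p-1}(X_{\rm log}/k,\mathcal{G}_{\boldsymbol\alpha})$: integrability is preserved, and the $p$-curvature transforms by the adjoint action of $\Delta_i$, hence remains nilpotent of exponent $\le p-1$ in the sense of Definition~\ref{defn_nil}, since conjugation by an element of $T$ sends the nilradical of a parabolic of nilpotency class $<p$ to that of another such parabolic. The reverse direction is symmetric: conjugating a logahoric connection by $\Delta_i$ in the opposite order and substituting $w_i^{d_i}=z_i$ cancels the $\alpha_i$-part via $\Delta_i\,d\,\Delta_i^{-1}=-\alpha_i\tfrac{dz_i}{z_i}$ and yields a $\mu_{d_i}$-equivariant connection whose residue is again nilpotent of exponent $\le p-1$, i.e. an object of ${\rm MIC}_{p-1}(\mathscr{X}_{\rm log}/k,G,\boldsymbol\rho)$; one patches using $G(\widehat{\widetilde{R}}_i)^{\rho_i}\cong G_{\alpha_i}(\widehat{R}_i)$ from Remark~\ref{rem_Higgs_field} along the transition functions $\widehat{\Theta}_i$ versus $\widehat{\Theta}'_i$ of Remark~\ref{rem_trans_func}.

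Finally I would check that the two assignments are quasi-inverse functors: morphisms of logahoric connections (resp. of logarithmic flat $G$-bundles of type $\boldsymbol\rho$) are intertwined by the very same conjugations, which gives full faithfulness, while essential surjectivity in both directions is produced by the previous paragraph; assembling the local pictures over $X\backslash D$ and the $\Spec R_i$ into a connection on all of $X$ is harmless by Hartogs, the constructions agreeing on overlaps, exactly as in Proposition~\ref{prop_parah_equiv_G_Higgs_stab} and Corollary~\ref{cor_parah_equiv_G_Higgs}. The main obstacle --- and the only content beyond the Higgs case --- is the local residue analysis: one must check that conjugation by $\Delta_i=(d_i\alpha_i)(w_i)$, which involves powers of $w_i$ that are fractional relative to the uniformizer $z_i$, keeps the root-space components of the connection form inside $\mathfrak{g}_{\alpha_i}(\widehat{R}_i)$ --- this is where the hypotheses $p\nmid d_i$ and $r(\alpha_i)\le 1$ for all roots $r$ enter --- and that the Jordan decomposition of the residue splits off precisely $\alpha_i$. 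This is the $G$-torsor analogue of the adjustedness computation of \cite[\S 2]{KS20a} (and of \cite{LS21} for parahoric torsors on curves), carried out one root space at a time, and I expect it to go through with their arguments.
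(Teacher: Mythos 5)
Your proposal follows essentially the same route as the paper: reduce to the torsor equivalence of Proposition \ref{prop_parah_equiv_G} and the Higgs-side argument of Proposition \ref{prop_parah_equiv_G_Higgs_stab}, then handle the one new point locally on $[\Spec\,\widetilde{R}_i/\mu_{d_i}]$ versus $\Spec\,R_i$ by gauge-transforming the equivariant connection by $\Delta_i=w_i^{d_i\alpha_i}$, so that the term $\Delta_i^{-1}d\Delta_i=d_i\alpha_i\,\tfrac{dw_i}{w_i}=\alpha_i\,\tfrac{dz_i}{z_i}$ produces, after the $\mu_{d_i}$-invariant expression descends under $z_i=w_i^{d_i}$, a residue with semisimple part $\alpha_i$ and nilpotent part of exponent $\le p-1$, which is exactly the computation in the paper's proof. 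The extra verifications you sketch (preservation of $p$-curvature nilpotence, the reverse direction, gluing and functoriality) are left implicit in the paper via the reference to the Higgs case, so your write-up is a correct, slightly more detailed version of the same argument.
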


\begin{proof}
    The discussion is similar to that of Higgs bundles (Proposition \ref{prop_parah_equiv_G_Higgs_stab}) and we only work on local charts $[\Spec \, \widetilde{R}_i / \mu_{d_i}]$ and $\Spec \, R_i$. The only thing we have to show is how the residues of connections change. Let $(\mathscr{V},\nabla)$ be a logarithmic flat $G$-bundle on $\mathscr{X}$ and denote by $\nabla_i$ the restriction of $\nabla$ to $\mathscr{V}_i$. Recall that the transition function $\Theta_i$ changes to $\Theta'_i = \Delta_i^{-1} \Theta_i$ (Proposition \ref{prop_parah_equiv_G}), which results in the change of Higgs fields $\theta_i= \Delta_i^{-1} \vartheta_i \Delta_i$ (Proposition \ref{prop_parah_equiv_G_Higgs_stab}). Therefore, we define a new connection $\nabla'_i (w_i):= \Delta_i(w_i) \circ \nabla_i(w_i)$, where the action is the gauge action. Regarding the element $\Delta_i(w_i)$ as $w_i^{d_i \alpha_i}$, we have
    \begin{align*}
        \nabla'_i(w_i) = d_i \alpha_i \frac{dw_i}{w_i} + \Delta_i^{-1}(w_i) \nabla_i(w_i) \Delta_i(w_i),
    \end{align*}
    which is $\mu_{d_i}$-invariant. Under the substitution $z_i=w_i^{d_i}$, we obtain a connection $\nabla'_i(z_i)$. Since the residue of $\nabla_i$ is nilpotent, the semisimple part of $\nabla'_i(z_i)$ is $\alpha_i$.
\end{proof}

\begin{defn}
Let $(\mathcal{V},\nabla')$ be a logahoric $\mathcal{G}_{\boldsymbol\alpha}$-connection. It is \emph{$R$-semistable} (resp. \emph{$R$-stable}) if
\begin{itemize}
	\item for any maximal parabolic subgroup $\mathcal{P} \subseteq (\mathcal{G}_{\boldsymbol\alpha})_\eta$;
	\item for any antidominant character $\kappa: \mathcal{P} \rightarrow \mathbb{G}_{m,\eta}$ trivial on the center;
	\item for any $\vartheta$-compatible reduction of structure group $\varsigma: X_\eta \rightarrow \mathcal{E}_\eta / \mathcal{P}$,
\end{itemize}
we have
\begin{align*}
    par \deg L(\varsigma,\kappa) \geq 0 \text{ (resp. $> 0$)}.
\end{align*}
\end{defn}

Adding the stability conditions, we have:
\begin{cor}\label{cor_parah_equiv_G_conn_stab}
The categories ${\rm MIC}^{(s)s}_{p-1}(X_{\rm log}/k, \mathcal{G}_{\boldsymbol\alpha})$ and ${\rm MIC}^{(s)s}_{p-1}(\mathscr{X}_{\rm log}/k, G ,\boldsymbol\rho)$ are equivalent.
\end{cor}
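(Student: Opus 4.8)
The plan is to deduce the corollary from Proposition~\ref{prop_parah_equiv_G_conn} in exactly the way Proposition~\ref{prop_parah_equiv_G_Higgs_stab} was deduced from Proposition~\ref{prop_parah_equiv_G}: the underlying equivalence of categories is already established, so only the data entering the two stability conditions must be matched, namely the compatible reductions of structure group and the degrees of the associated (parabolic) line bundles.

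First I would invoke the dictionary of Lemma~\ref{lem_parab_line_bundle} and Proposition~\ref{prop_parah_equiv_G_stab}, which, under $\mathscr{X}\cong[Y/\Gamma]$, gives one-to-one correspondences between (i) maximal parabolic subgroups $\mathcal{P}\subseteq(\mathcal{G}_{\boldsymbol\alpha})_\eta$ and maximal parabolic subgroups $P\subseteq G$; (ii) antidominant characters $\kappa:\mathcal{P}\to\mathbb{G}_{m,\eta}$ trivial on the center and antidominant $\chi:P\to\mathbb{G}_m$ trivial on the center; and (iii) generic reductions $\varsigma_\eta:X_\eta\to\mathcal{E}_\eta/\mathcal{P}$ and reductions $\sigma:\mathscr{X}^{\rm big}\to(\mathscr{E}|_{\mathscr{X}^{\rm big}})/P$ over a suitable big open substack; together with the degree identity $\deg L(\sigma,\chi)=par\deg L(\varsigma,\kappa)$ furnished by \cite[Theorem 4.3]{Bor07}. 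None of this involves the connection.

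The one new point is that this dictionary matches $\nabla'$-compatible reductions on the $X$-side with $\nabla$-compatible reductions on the $\mathscr{X}$-side. As in the proof of Proposition~\ref{prop_parah_equiv_G_conn}, I would work locally on $[\Spec\,\widetilde{R}_i/\mu_{d_i}]$ and $\Spec\,R_i$, where the correspondence turns the connection $\nabla_i$ on $\mathscr{V}_i$ into $\nabla'_i=\Delta_i\circ\nabla_i$ obtained by the gauge action of $\Delta_i\in T(\widehat{\widetilde{R}}_i)$. Since $\Delta_i$ is an isomorphism $(\mathscr{V}_i,\nabla_i)\xrightarrow{\cong}(\mathscr{V}_i,\nabla'_i)$ of $G$-connections (and likewise of $P$-connections on the $P$-subbundle $\mathscr{E}_{\sigma,i}$), it carries $\nabla_i$-compatible reductions bijectively to $\nabla'_i$-compatible ones; applying the same $\Delta_i$-twist to $\mathscr{E}_\sigma$ preserves the compatibility diagram. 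Gluing the local reductions through the already matched transition functions, using Hartogs' lemma to pass between $X^{\rm big}$ and $X$ and between $\mathscr{X}^{\rm big}$ and $\mathscr{X}$, and recalling that the $\mu_{d_i}$-invariance of $\Delta_i^{-1}\nabla_i\Delta_i$ under $z_i=w_i^{d_i}$ is precisely what makes $\nabla'_i$ descend, yields the bijection between $\nabla'$-compatible and $\nabla$-compatible reductions. Combined with the degree identity, this shows $\deg L(\sigma,\chi)\ge 0$ (resp. $>0$) exactly when $par\deg L(\varsigma,\kappa)\ge 0$ (resp. $>0$), which is the definition of $R$-(semi)stability on each side; together with Proposition~\ref{prop_parah_equiv_G_conn} this gives the equivalence ${\rm MIC}^{(s)s}_{p-1}(X_{\rm log}/k,\mathcal{G}_{\boldsymbol\alpha})\simeq{\rm MIC}^{(s)s}_{p-1}(\mathscr{X}_{\rm log}/k,G,\boldsymbol\rho)$.

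I expect the main obstacle to be purely bookkeeping rather than conceptual: one must make sure the big-open-subset subtleties (a $\nabla'$-compatible generic reduction extends over a codimension-$\ge 2$ complement on which the relevant $P$-bundle and its $P$-connection live, and the adjustedness condition on residues imposes no extra constraint) are handled uniformly on both sides, so that the parabolic-line-bundle degree identity of \cite{Bor07}, stated on big opens, applies verbatim. The connection itself enters only through the harmless gauge twist by $\Delta_i$, so no genuinely new geometric input beyond Propositions~\ref{prop_parah_equiv_G_stab} and \ref{prop_parah_equiv_G_Higgs_stab} is needed.
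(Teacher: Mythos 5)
Your proposal is correct and follows essentially the same route the paper intends: the corollary is stated there without proof as an immediate combination of Proposition \ref{prop_parah_equiv_G_conn} with the reduction/character/degree dictionary of Lemma \ref{lem_parab_line_bundle} and Proposition \ref{prop_parah_equiv_G_stab}, exactly as in the Higgs case (Proposition \ref{prop_parah_equiv_G_Higgs_stab}), with the gauge twist by $\Delta_i$ handling compatibility of reductions with the connection. Your write-up just makes explicit the bookkeeping the paper leaves implicit.
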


Denote by $\boldsymbol\rho$ the collection of corresponding representations of $\boldsymbol\alpha$. Let $\boldsymbol\rho^p$ be the collection of representations given in Remark \ref{rem_type} and denote by $\boldsymbol\alpha'$ the corresponding collection of weights. In conclusion, we obtain a logahoric version of the nonabelian Hodge correspondence:
\begin{thm}\label{thm_log_NAHC}
Suppose that $(X,D)$ is $W_2(k)$-liftable. The following categories are equivalent:
\begin{itemize}
    \item ${\rm HIG}_{p-1}(X'_{\rm log}/k,\mathcal{G}_{\boldsymbol\alpha})$ and ${\rm MIC}_{p-1}(X_{\rm log}/k, \mathcal{G}_{\boldsymbol\alpha'})$,
    \item ${\rm HIG}^{(s)s}_{p-1}(X'_{\rm log}/k,\mathcal{G}_{\boldsymbol\alpha})$ and ${\rm MIC}^{(s)s}_{p-1}(X_{\rm log}/k, \mathcal{G}_{\boldsymbol\alpha'})$.
\end{itemize}
\end{thm}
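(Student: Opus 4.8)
The plan is to assemble Theorem \ref{thm_log_NAHC} from the three ingredients displayed in the diagram of the introduction, exactly as that diagram suggests. First I would reduce the logahoric statement on $(X,D)$ to the logarithmic statement on the root stack $\mathscr{X}$. Given a logahoric $\mathcal{G}_{\boldsymbol\alpha}$-Higgs torsor $(\mathcal{E},\vartheta)\in{\rm HIG}_{p-1}(X'_{\rm log}/k,\mathcal{G}_{\boldsymbol\alpha})$, choose the $G$-bundle $\mathscr{F}$ on $\mathscr{X}'$ defining $\mathcal{G}_{\boldsymbol\alpha}$ (Lemma \ref{lem_parah_grp}, Notation \ref{notn_parah_grp}) and apply the functor $\eta_{\mathscr{F}}$ of Proposition \ref{prop_parah_equiv_G} together with the Higgs-field transport of Proposition \ref{prop_parah_equiv_G_Higgs_stab} (equivalently Corollary \ref{cor_parah_equiv_G_Higgs}) to produce $(\mathscr{E},\theta)\in{\rm HIG}_{p-1}(\mathscr{X}'_{\rm log}/k,G,\boldsymbol\rho)$. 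The nilpotency of exponent $\leq p-1$ is preserved because it is a local condition and the correspondence is, locally on $\Spec\widetilde R_i$, conjugation by $\Delta_i$ followed by the substitution $z_i=w_i^{d_i}$, which does not change the unipotent radical containing the Higgs field.

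Next I would invoke Theorem \ref{thm_HIG_MIC_G_log_stack}: the $W_2(k)$-liftability of $(X,D)$ implies $(\mathscr{X},\widetilde D)$ is $W_2$-liftable (this is the content of Theorem \ref{thm_root_stack} combined with the étale-local lifting discussion in \S\ref{subsect_Higgs_conn_stack}), so the logarithmic inverse Cartier transform produces $(\mathscr{V},\nabla)\in{\rm MIC}_{p-1}(\mathscr{X}_{\rm log}/k,G,\boldsymbol\rho^p)$, where the type changes from $\boldsymbol\rho$ to $\boldsymbol\rho^p$ precisely as recorded in Remark \ref{rem_type} (pulling back a representation $\rho_i$ along the relative Frobenius $F_1$ replaces it by $\rho_i^p$), and the residues stay nilpotent of exponent $\leq p-1$ because on local charts ${\rm Res}_{\widetilde D_i}\nabla=F_1^*{\rm Res}_{\widetilde D_i}\theta$. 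Finally, descending back to $(X,D)$ via Proposition \ref{prop_parah_equiv_G_conn} converts $(\mathscr{V},\nabla)$ into a logahoric $\mathcal{G}_{\boldsymbol\alpha'}$-connection, where $\boldsymbol\alpha'$ is the collection of tame weights corresponding to $\boldsymbol\rho^p$; by the computation in the proof of Proposition \ref{prop_parah_equiv_G_conn}, conjugating by $\Delta_i$ and setting $z_i=w_i^{d_i}$ yields a connection whose residue has semisimple part exactly $\alpha_i'$ (regarded in $\mathfrak g$) and nilpotent part of exponent $\leq p-1$, so the image lies in ${\rm MIC}_{p-1}(X_{\rm log}/k,\mathcal{G}_{\boldsymbol\alpha'})$. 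Chasing the diagram in the opposite sense — using the logarithmic Cartier transform of Theorem \ref{thm_HIG_MIC_G_log_stack} and the inverse functors $\beta_{\mathscr{F}}$, and the inverse transports of Corollary \ref{cor_parah_equiv_G_Higgs} and Proposition \ref{prop_parah_equiv_G_conn} — gives a quasi-inverse, so the two categories are equivalent.

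For the stability statement, I would run the same diagram through the $R$-(semi)stable subcategories. The bottom vertical equivalences restrict to $R$-(semi)stable objects by Proposition \ref{prop_parah_equiv_G_stab} together with the refined Higgs and connection versions, whose proofs rest on the degree identity $\deg L(\sigma,\chi)=par\deg L(\varsigma,\kappa)$ from \cite[Theorem 4.3]{Bor07} and the bijection between $\vartheta$-compatible (resp. $\nabla'$-compatible) generic reductions downstairs and $\theta$-compatible (resp. $\nabla$-compatible) reductions on a big open substack of $\mathscr{X}$. The middle horizontal equivalence preserves $R$-(semi)stability by Theorem \ref{thm_HIG_MIC_G_stab_stack}, using that $\deg_{H'}\mathscr{F}'=p\deg_H\mathscr{F}$ so the sign of the relevant degree is unchanged under Frobenius pullback. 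Composing, an object on one side is $R$-(semi)stable iff its image on the other side is, which is the final assertion of the theorem.

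The main obstacle is bookkeeping the weight change $\boldsymbol\alpha\mapsto\boldsymbol\alpha'$ consistently across all four corners of the diagram and verifying the adjustedness condition is exactly met — i.e., that the semisimple part of ${\rm Res}_{D_i}\nabla'$ equals $\alpha_i'$ and not merely something conjugate to it. This requires being careful that the element $\Delta_i\in T(\widehat{\widetilde R}_i)$ used to change transition functions is the same one used to twist the connection, so that the logarithmic term $d_i\alpha_i'\,dw_i/w_i$ it contributes is literally $\alpha_i'$ after the substitution $z_i=w_i^{d_i}$; once this is pinned down on each formal disc, the rest is the routine gluing already carried out in \S\ref{subsect_inv_car_G}, \S\ref{subsect_car_G} and Propositions \ref{prop_parah_equiv_G_Higgs_stab} and \ref{prop_parah_equiv_G_conn}. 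A secondary point to check is that all the functors involved are genuinely mutually quasi-inverse as functors (not just bijections on objects), but this follows since every arrow in the diagram has already been established as an equivalence in the cited statements.
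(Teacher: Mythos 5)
Your overall route is the same as the paper's: assemble the equivalence by chasing the square whose vertical sides are Corollary \ref{cor_parah_equiv_G_Higgs} and Proposition \ref{prop_parah_equiv_G_conn} (resp. Corollary \ref{cor_parah_equiv_G_conn_stab} in the stable case) and whose horizontal side is Theorem \ref{thm_HIG_MIC_G_log_stack} (resp. Theorem \ref{thm_HIG_MIC_G_stab_stack}), keeping track of the change of type $\boldsymbol\rho\mapsto\boldsymbol\rho^p$ from Remark \ref{rem_type} and of weights $\boldsymbol\alpha\mapsto\boldsymbol\alpha'$, and of the adjustedness of residues via the computation with $\Delta_i$ and $z_i=w_i^{d_i}$. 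All of that matches the paper and is fine.

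There is, however, one genuine gap, and it is precisely the point on which the paper's proof spends its effort. The hypothesis of Theorem \ref{thm_log_NAHC} is that $(X,D)$ is $W_2(k)$-liftable, whereas Theorem \ref{thm_HIG_MIC_G_log_stack} and Theorem \ref{thm_HIG_MIC_G_stab_stack} require $(\mathscr{X},\widetilde{D})$ to be $W_2$-liftable. You assert that this implication ``is the content of Theorem \ref{thm_root_stack} combined with the \'etale-local lifting discussion in \S\ref{subsect_Higgs_conn_stack}'', but neither of these proves it: Theorem \ref{thm_root_stack} only constructs the root stack, and the discussion in \S\ref{subsect_Higgs_conn_stack} merely \emph{defines} $W_2(k)$-liftability of $\mathscr{X}$ in terms of a compatible lifting of an \'etale atlas $[Y/R]$ --- it does not produce such a lifting. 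The paper closes this gap by taking a Kawamata covering $\pi':Y\to X$ with $D'=(\pi'^*D)_{\mathrm{red}}$ normal crossing (\cite[Theorem 17]{Kaw81}, valid here in positive characteristic because all $d_i$ are coprime to $p$, i.e.\ by tameness), noting that the induced map $Y\to\mathscr{X}$ is surjective \'etale (\cite[Lemma 4.2]{Sim11}), and then invoking \cite[Theorem 1.1]{XW13} to lift the pair $(Y,D')$ to $W_2(k)$ given that $(X,D)$ lifts. Without this (or some equivalent argument producing a $W_2(k)$-lifting of the root stack together with $\widetilde{D}$), your appeal to the stacky correspondence has no verified hypothesis, so you should add this step explicitly.
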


\begin{proof}
The proof of the first equivalence follows directly from the following diagram,
\begin{equation*}
\begin{tikzcd}
{\rm HIG}_{p-1}(\mathscr{X}'_{\rm log}/k , G, \boldsymbol\rho) \arrow[rr, "{\rm Theorem} \,\ref{thm_HIG_MIC_G_log_stack}", equal] \arrow[dd, "{\rm Corollary} \, \ref{cor_parah_equiv_G_Higgs}", equal] & & {\rm MIC}_{p-1}(\mathscr{X}_{\rm log}/k, G ,\boldsymbol\rho^p) \arrow[dd,"{\rm Proposition} \, \ref{prop_parah_equiv_G_conn}", equal] \\
& & \\
{\rm HIG}_{p-1}(X'_{\rm log}/k,\mathcal{G}_{\boldsymbol\alpha}) \arrow[rr, dotted, dash] & & {\rm MIC}_{p-1}(X_{\rm log}/k, \mathcal{G}_{\boldsymbol\alpha'})
\end{tikzcd}
\end{equation*}
and the second equivalence is given via the following diagram.
\begin{equation*}
\begin{tikzcd}
{\rm HIG}^{(s)s}_{p-1}(\mathscr{X}'_{\rm log}/k , G, \boldsymbol\rho) \arrow[rr, "{\rm Theorem} \,\ref{thm_HIG_MIC_G_stab_stack}", equal] \arrow[dd, "{\rm Corollary} \, \ref{cor_parah_equiv_G_Higgs}", equal] & & {\rm MIC}^{(s)s}_{p-1}(\mathscr{X}_{\rm log}/k, G ,\boldsymbol\rho^p) \arrow[dd,"{\rm Corollary} \, \ref{cor_parah_equiv_G_conn_stab}", equal] \\
& & \\
{\rm HIG}^{(s)s}_{p-1}(X'_{\rm log}/k,\mathcal{G}_{\boldsymbol\alpha}) \arrow[rr, dotted, dash] & & {\rm MIC}^{(s)s}_{p-1}(X_{\rm log}/k, \mathcal{G}_{\boldsymbol\alpha'})
\end{tikzcd}
\end{equation*}
The only thing we have to prove is that if $(X,D)$ is $W_2(k)$-liftable, the corresponding pairs $(\mathscr{X},\widetilde{D})$ is also $W_2(k)$-liftable, where $\mathscr{X}$ is the root stack determined by the data $(X,D,\boldsymbol{d})$. By the Kawamata covering lemma \cite[Theorem 17]{Kaw81}, there exists a covering $\pi': Y \rightarrow X$ such that $Y$ is smooth and $D':=(\pi'^* D)_{red}$ is a reduced normal crossing divisor on $Y$. Moreover, we can assume that the induced morphism $Y \rightarrow \mathscr{X}$ is surjective and \'etale \cite[Lemma 4.2]{Sim11}. Note that although Kawamata and Simpson stated the results only in characteristic zero, their arguments hold in our case due to the assumption that all integers in $\boldsymbol{d}$ are coprime to $p$. Then it is enough to show that the pair $(Y, D')$ is $W_2(k)$-liftable. This property has been proved in \cite[Theorem 1.1]{XW13} and we also refer the reader to \cite[Notation 2.16]{KS20a} for the case of curves.
\end{proof}

\begin{rem}
When $X$ is a curve and $G= {\rm GL}_n$, the correspondence given in Theorem \ref{thm_log_NAHC} has been studied in \cite[Theorem 2.10]{KS20a}, where the author used parabolic Higgs bundles and parabolic flat bundles to prove this result.
\end{rem}

\vspace{2mm}
{\bf Acknowledgement.} The first and third named authors are partially supported by the National Key R and D Program of China 2020YFA0713100, CAS Project for Young Scientists in Basic Research Grant No. YSBR-032. The second named author is supported by Guangdong Basic and Applied Basic Research Foundation 2024A1515011583 and National Key R and D Program of China 2022YFA1006600.

\bibliographystyle{amsalpha} 
\bibliography{ref_pnahc}

\providecommand{\bysame}{\leavevmode\hbox to3em{\hrulefill}\thinspace}
\providecommand{\MR}{\relax\ifhmode\unskip\space\fi MR }
\providecommand{\MRhref}[2]{%
  \href{http://www.ams.org/mathscinet-getitem?mr=#1}{#2}
}
\providecommand{\href}[2]{#2}
\begin{thebibliography}{Ram96b}

\bibitem[AB01]{AB01}
B.~Anchouche and I.~Biswas, \emph{Einstein-{H}ermitian connections on
  polystable principal bundles over a compact {K}\"{a}hler manifold}, Amer. J.
  Math. \textbf{123} (2001), no.~2, 207--228. \MR{1828221}

\bibitem[AOV08]{AOV08}
D.~Abramovich, M.~Olsson, and A.~Vistoli, \emph{Tame stacks in positive
  characteristic}, Ann. Inst. Fourier (Grenoble) \textbf{58} (2008), no.~4,
  1057--1091.

\bibitem[BDP17]{BDP17}
V.~Balaji, P.~Deligne, and A.~J. Parameswaran, \emph{On complete reducibility
  in characteristic {$p$}}, \'Epijournal G\'eom. Alg\'ebrique \textbf{1}
  (2017), Art. 3, 27, With an appendix by Zhiwei Yun. \MR{3743106}

\bibitem[Bis10]{Bis10}
I.~Biswas, \emph{The {A}tiyah bundle and connections on a principal bundle},
  Proc. Indian Acad. Sci. Math. Sci. \textbf{120} (2010), no.~3, 299--316.

\bibitem[BLR90]{BLR90}
S.~Bosch, W.~L\"{u}tkebohmert, and M.~Raynaud, \emph{N\'{e}ron models},
  Ergebnisse der Mathematik und ihrer Grenzgebiete (3) [Results in Mathematics
  and Related Areas (3)], vol.~21, Springer-Verlag, Berlin, 1990.

\bibitem[Bor07]{Bor07}
N.~Borne, \emph{Fibr\'{e}s paraboliques et champ des racines}, Int. Math. Res.
  Not. IMRN (2007), no.~16, Art. ID rnm049, 38.

\bibitem[BP22]{BP22}
V.~Balaji and Y.~Pandey, \emph{On bruhat--tits theory over a higher dimensional
  base}, arXiv:2203.0431v2 (2022).

\bibitem[BS15]{BS15}
V.~Balaji and C.~S. Seshadri, \emph{Moduli of parahoric {G}-torsors on a
  compact {R}iemann surface}, J. Algebraic Geom. \textbf{24} (2015), no.~1,
  1--49.

\bibitem[Cad07]{Cad07}
C.~Cadman, \emph{Using stacks to impose tangency conditions on curves}, Amer.
  J. Math. \textbf{129} (2007), no.~2, 405--427.

\bibitem[CN20]{CN20}
T.~H. Chen and B.~C. Ng\^{o}, \emph{On the {H}itchin morphism for
  higher-dimensional varieties}, Duke Math. J. \textbf{169} (2020), no.~10,
  1971--2004.

\bibitem[CS21]{CS21}
B.~Collier and A.~Sanders, \emph{({G},{P})-opers and global {S}lodowy slices},
  Adv. Math. \textbf{377} (2021), Paper No. 107490, 43.

\bibitem[CZ15]{CZ15}
T.~H. Chen and X.~Zhu, \emph{Non-abelian {H}odge theory for algebraic curves in
  characteristic {$p$}}, Geom. Funct. Anal. \textbf{25} (2015), no.~6,
  1706--1733.

\bibitem[Dam24]{Dam24}
C.~Damiolini, \emph{On equivariant bundles and their moduli spaces}, C. R.
  Math. Acad. Sci. Paris \textbf{362} (2024), 55--62.

\bibitem[DI87]{DI87}
P.~Deligne and L.~Illusie, \emph{Rel\`evements modulo {$p^2$} et
  d\'{e}composition du complexe de de {R}ham}, Invent. Math. \textbf{89}
  (1987), no.~2, 247--270.

\bibitem[GLSS08]{GLSS08}
T.~L. G\'omez, A.~Langer, A.~H.~W. Schmitt, and I.~Sols, \emph{Moduli spaces
  for principal bundles in arbitrary characteristic}, Adv. Math. \textbf{219}
  (2008), no.~4, 1177--1245.

\bibitem[GW20]{GW20}
U.~G\"ortz and T.~Wedhorn, \emph{Algebraic geometry {I}. {S}chemes---with
  examples and exercises}, second ed., Springer Studium Mathematik---Master,
  Springer Spektrum, Wiesbaden, [2020] \copyright 2020.

\bibitem[Hei10]{Hei10}
J.~Heinloth, \emph{Uniformization of {$G$}-bundles}, Math. Ann. \textbf{347}
  (2010), no.~3, 499--528.

\bibitem[Hei17]{Hei17}
\bysame, \emph{Hilbert-{M}umford stability on algebraic stacks and applications
  to {$G$}-bundles on curves}, \'{E}pijournal G\'{e}om. Alg\'{e}brique
  \textbf{1} (2017), Art. 11, 37.

\bibitem[Her13]{Her13}
S.~Herpel, \emph{On the smoothness of centralizers in reductive groups}, Trans.
  Amer. Math. Soc. \textbf{365} (2013), no.~7, 3753--3774.

\bibitem[Hum75]{Hum75}
James~E. Humphreys, \emph{Linear algebraic groups}, Graduate Texts in
  Mathematics, vol. No. 21, Springer-Verlag, New York-Heidelberg, 1975.
  \MR{396773}

\bibitem[HZ23]{HZ23}
A.~Herrero and S.~Zhang, \emph{Meromorphic {H}odge moduli spaces for reductive
  groups in arbitrary characteristic}, arXiv:2307.16755 (2023).

\bibitem[Kat70]{Kat70}
N.~Katz, \emph{Nilpotent connections and the monodromy theorem: {A}pplications
  of a result of {T}urrittin}, Inst. Hautes \'{E}tudes Sci. Publ. Math. (1970),
  no.~39, 175--232.

\bibitem[Kaw81]{Kaw81}
Y.~Kawamata, \emph{Characterization of abelian varieties}, Compositio Math.
  \textbf{43} (1981), no.~2, 253--276.

\bibitem[Kos93]{Kos93}
T.~Kosir, \emph{On the structure of commutative matrices}, Linear Algebra Appl.
  \textbf{187} (1993), 163--182.

\bibitem[KS20]{KS20a}
R.~Krishnamoorthy and M.~Sheng, \emph{Periodic de rham bundles ov curves},
  arXiv:2011.03268 (2020).

\bibitem[KSZ24]{KSZ23}
G.~Kydonakis, H.~Sun, and L.~Zhao, \emph{Logahoric {H}iggs torsors for a
  complex reductive group}, Math. Ann. \textbf{388} (2024), no.~3, 3183--3228.

\bibitem[LS24]{LS21}
M.~Li and H.~Sun, \emph{Tame parahoric nonabelian {H}odge correspondence in
  positive characteristic over algebraic curves}, Selecta Math. (N.S.)
  \textbf{30} (2024), no.~4, Paper No. 60.

\bibitem[LSYZ19]{LSYZ19}
G.~Lan, M.~Sheng, Y.~Yang, and K.~Zuo, \emph{Uniformization of {$p$}-adic
  curves via {H}iggs--de {R}ham flows}, J. Reine Angew. Math. \textbf{747}
  (2019), 63--108.

\bibitem[LSZ15]{LSZ15}
G.~Lan, M.~Sheng, and K.~Zuo, \emph{Nonabelian {H}odge theory in positive
  characteristic via exponential twisting}, Math. Res. Lett. \textbf{22}
  (2015), no.~3, 859--879.

\bibitem[McN02]{Mcn02}
G.~J. McNinch, \emph{Abelian unipotent subgroups of reductive groups}, J. Pure
  Appl. Algebra \textbf{167} (2002), no.~2-3, 269--300.

\bibitem[Mil17]{Mil17}
J.~S. Milne, \emph{Algebraic groups}, Cambridge Studies in Advanced
  Mathematics, vol. 170, Cambridge University Press, Cambridge, 2017, The
  theory of group schemes of finite type over a field.

\bibitem[MO05]{MO05}
K.~Matsuki and M.~Olsson, \emph{Kawamata-{V}iehweg vanishing as {K}odaira
  vanishing for stacks}, Math. Res. Lett. \textbf{12} (2005), no.~2-3,
  207--217.

\bibitem[OV07]{OV07}
A.~Ogus and V.~Vologodsky, \emph{Nonabelian {H}odge theory in characteristic
  {$p$}}, Publ. Math. Inst. Hautes \'{E}tudes Sci. (2007), no.~106, 1--138.

\bibitem[Ram75]{Ram75}
A.~Ramanathan, \emph{Stable principal bundles on a compact {R}iemann surface},
  Math. Ann. \textbf{213} (1975), 129--152.

\bibitem[Ram79]{Ram78}
\bysame, \emph{Moduli for principal bundles}, Algebraic geometry ({P}roc.
  {S}ummer {M}eeting, {U}niv. {C}openhagen, {C}openhagen, 1978), Lecture Notes
  in Math., vol. 732, Springer, Berlin, 1979, pp.~527--533.

\bibitem[Ram96a]{Ram96a}
\bysame, \emph{Moduli for principal bundles over algebraic curves. {I}}, Proc.
  Indian Acad. Sci. Math. Sci. \textbf{106} (1996), no.~3, 301--328.

\bibitem[Ram96b]{Ram96b}
\bysame, \emph{Moduli for principal bundles over algebraic curves. {II}}, Proc.
  Indian Acad. Sci. Math. Sci. \textbf{106} (1996), no.~4, 421--449.

\bibitem[Sch05]{Sch05}
D.~Schepler, \emph{Logarithmic nonabelian {H}odge theory in characteristic p},
  ProQuest LLC, Ann Arbor, MI, 2005, Thesis (Ph.D.)--University of California,
  Berkeley.

\bibitem[Sei00]{Sei00}
G.~M. Seitz, \emph{Unipotent elements, tilting modules, and saturation},
  Invent. Math. \textbf{141} (2000), no.~3, 467--502.

\bibitem[Ser94]{Ser94}
J.-P. Serre, \emph{Sur la semi-simplicit\'{e} des produits tensoriels de
  repr\'{e}sentations de groupes}, Invent. Math. \textbf{116} (1994), no.~1-3,
  513--530.

\bibitem[She24]{She21}
M.~Sheng, \emph{Twisted funcotiality in nonabelian hodge theory in positive
  characteristic}, to appear in Pure Appl. Math. Q. (2024).

\bibitem[Sim90]{Sim90}
C.~T. Simpson, \emph{Harmonic bundles on noncompact curves}, J. Amer. Math.
  Soc. \textbf{3} (1990), no.~3, 713--770.

\bibitem[Sim11]{Sim11}
\bysame, \emph{Local systems on proper algebraic {$V$}-manifolds}, Pure Appl.
  Math. Q. \textbf{7} (2011), no.~4, Special Issue: In memory of Eckart
  Viehweg, 1675--1759.

\bibitem[Sob15]{Sob15b}
P.~Sobaje, \emph{Exponentiation of commuting nilpotent varieties}, J. Pure
  Appl. Algebra \textbf{219} (2015), no.~6, 2206--2217.

\bibitem[XW13]{XW13}
Q.~Xie and J.~Wu, \emph{Strongly liftable schemes and the {K}awamata-{V}iehweg
  vanishing in positive characteristic {III}}, J. Algebra \textbf{395} (2013),
  12--23.

\bibitem[Yan22]{Yan22}
M.~Yang, \emph{A comparison of generalized opers and {$(G,P)$}-opers}, Indian
  J. Pure Appl. Math. \textbf{53} (2022), no.~3, 760--773.

\bibitem[Yun11]{Yun11}
Z.~Yun, \emph{Global {S}pringer theory}, Adv. Math. \textbf{228} (2011), no.~1,
  266--328.

\end{thebibliography}

\bigskip

\noindent\small{\textsc{Yau Mathematical Science Center, Tsinghua University, \\
Beijing, 100084, China}\\
\noindent\small{\textsc{Yanqi Lake Beijing Institute of Mathematical Sciences and Applications,\\
Beijing, 101408, China}\\
\emph{E-mail address}:  \texttt{msheng@mail.tsinghua.edu.cn}

\bigskip
\noindent\small{\textsc{Department of Mathematics, South China University of Technology, \\
Guangzhou, 510641, China}\\
\emph{E-mail address}:  \texttt{hsun71275@scut.edu.cn}

\bigskip
\noindent\small{\textsc{Yau Mathematical Science Center, Tsinghua University, \\
Beijing, 100084, China}\\
\emph{E-mail address}:  \texttt{jianpw@mail.tsinghua.edu.cn}

\end{document}